\documentclass{amsart} 

\setlength{\textwidth}{170mm} \setlength{\textheight}{8.0in} \setlength{\oddsidemargin}{-0.225cm} \setlength{\evensidemargin}{-0.225cm} \setlength{\footskip}{30pt} \addtolength{\textheight}{.695in} \addtolength{\voffset}{-.55in} 
\setlength{\parindent}{15pt} 

\usepackage{amssymb}
\usepackage{amsthm}
\usepackage{amsfonts}
\usepackage{amsmath}
\usepackage{pmboxdraw}
\usepackage{verbatim} 
\usepackage{graphicx}
\usepackage{color}
\usepackage[colorlinks=true, citecolor=blue, filecolor=black, linkcolor=black, urlcolor=black]{hyperref}
\usepackage{cite}
\usepackage[normalem]{ulem}
\usepackage{subcaption}
\usepackage{bbm}
\usepackage{bm}
\usepackage{mathtools}
\usepackage{todonotes}
\usepackage{kantlipsum}
\usepackage{multirow,makecell}  
\usepackage{mathrsfs}
\allowdisplaybreaks



\newcommand{\RN}[1]{%
	\textup{\uppercase\expandafter{\romannumeral#1}}%
}

\def\C{\mathbb{C}}
\def\D{\mathbb{D}}

\def\P{\mathbf{P}}
\def\R{\mathbb{R}}

\def\cK{\mathcal{K}}

\newcommand{\Pf}{{\textup{Pf}}}
\newcommand{\erfc}{\operatorname{erfc}}
\newcommand{\erf}{\operatorname{erf}}
\newcommand{\bfR}{\mathbf{R}}

\newcommand{\bfK}{\mathbf{K}}
\newcommand{\bfkappa}{{\bm \varkappa}}
\newcommand{\bfG}{{\bm G}}

\newcommand{\Ai}{\operatorname{Ai}}

\newcommand{\re}{\operatorname{Re}}
\newcommand{\im}{\operatorname{Im}}


\theoremstyle{plain}
\newtheorem{thm}{Theorem}[section]

\newtheorem{lem}[thm]{Lemma}

\newtheorem{prop}[thm]{Proposition}

\theoremstyle{remark}

\newtheorem{rem}[thm]{Remark}

\numberwithin{equation}{section}

\begin{document}

\title[Scaling limits of non-Hermitian Wishart ensembles]{Scaling limits of complex and symplectic \\ non-Hermitian Wishart ensembles}
\author{Sung-Soo Byun}
\address{Department of Mathematical Sciences and Research Institute of Mathematics, Seoul National University, Seoul 151-747, Republic of Korea}
\email{sungsoobyun@snu.ac.kr}

\author{Kohei Noda}
\address{Joint Graduate School of Mathematics for Innovation, Kyushu University, West Zone 1, 744 Motooka, Nishi-ku, Fukuoka 819-0395, Japan}
\email{noda.kohei.721@s.kyushu-u.ac.jp}

\begin{abstract}
Non-Hermitian Wishart matrices were introduced in the context of quantum chromodynamics with a baryon chemical potential. These provide chiral extensions of the elliptic Ginibre ensembles as well as non-Hermitian extensions of the classical Wishart/Laguerre ensembles. In this work, we investigate eigenvalues of non-Hermitian Wishart matrices in the symmetry classes of complex and symplectic Ginibre ensembles. We introduce a generalised Christoffel-Darboux formula in the form of a certain second-order differential equation, offering a unified and robust method for analyzing correlation functions across all scaling regimes in the model. By employing this method, we derive universal bulk and edge scaling limits for eigenvalue correlations at both strong and weak non-Hermiticity.
\end{abstract}

\date{\today}

\thanks{Sung-Soo Byun was partially supported by the POSCO TJ Park Foundation (POSCO Science Fellowship) and by the New Faculty Startup Fund at Seoul National University. Kohei Noda was partially supported by WISE program (JSPS) at Kyushu University, JSPS KAKENHI Grant Number (B) 18H01124 and 23H01077, and the Deutsche Forschungsgemeinschaft (DFG) grant SFB 1283/2 2021–317210226.
}

\maketitle

\section{Introduction and main results}

\subsection{Non-Hermitian Wishart matrices}

For given non-negative integers $N$ and $\nu$, we denote by $\textup{P}$ and $\textup{Q}$ the $N \times (N+\nu)$ random matrices with independent complex or quaternionic Gaussian entries of mean $0$ and variance $1/(2N)$. 
These are known as the complex or symplectic rectangular Ginibre ensembles, respectively, see \cite{BF22,BF23} for recent reviews.   
For a given non-Hermiticity  parameter $\tau \in [0,1]$, the non-Hermitian Wishart matrix is defined by 
\begin{equation}\label{def of nWishart}
X:=X_1 X_2^*, \qquad \begin{cases}
X_1=\sqrt{1+\tau} \,\, \textup{P}+\sqrt{1-\tau} \,\, \textup{Q},
\smallskip 
\\
X_2=\sqrt{1+\tau} \, \, \textup{P}-\sqrt{1-\tau} \,\, \textup{Q}. 
\end{cases}
\end{equation} 
For the external case $\tau=0$, the maximally non-Hermitian regime, the model gives a product of two rectangular Ginibre matrices, while in the opposite extremal case $\tau=1$, the Hermitian regime, the model coincides with the Laguerre unitary or symplectic ensemble (LUE or LSE) \cite{Fo10}, respectively. 
We mention that the model \eqref{def of nWishart} is also referred to as the chiral Ginibre ensemble \cite{Ste96,Os04}, where these models find applications in analyzing the Dirac operator spectrum of quantum chromodynamics with a chemical potential. It is also called the sample cross-covariance matrices \cite{BBD23} and finds applications for the analysis of time series \cite{KS10}.

We study eigenvalues of non-Hermitian complex/symplectic Wishart ensembles. 
As a common feature of integrable random matrices, the model \eqref{def of nWishart} also enjoys a statistical physics realisation in a sense that the ensemble can be interpreted as the Coulomb gas at $\beta=2$ with Dirichlet (complex) or Neumann (symplectic) boundary conditions along the real axis \cite{Os04,Ak05,Fo10}. 
To be more concrete, let us first setup some notations.
For given parameters $A>B \ge 0$, we consider the weight functions 
\begin{equation} \label{weight functions}
\omega^{ \rm c }_N(z) :=  |z|^{\nu} K_{\nu}(AN|z|) e^{ BN \re z }, \qquad  \omega^{ \rm s }_N(z) := |z|^{2\nu} K_{2\nu}(2AN|z|) e^{ 2BN \re z }, 
\end{equation} 
where $K_{\nu}$ is the modified Bessel function of the second kind
\begin{equation}
K_{\nu}(z)=\frac{\pi}{2}\frac{I_{-\nu}(z)-I_{\nu}(z)}{\sin(\nu\pi)},\qquad
I_{\nu}(z)=\sum_{k=0}^{\infty}\frac{(z/2)^{2k+\nu}}{k! \, \Gamma(k+\nu+1)}.
\end{equation}
The parameters $A$ and $B$ are related with the non-Hermiticity parameter $\tau \in [0,1)$ as 
\begin{equation} \label{def of A and B}
A=\frac{2}{1-\tau^2},\qquad
B=\frac{2\tau}{1-\tau^2}, 
\end{equation}
Then the eigenvalues $\boldsymbol{z}=(z_1,\cdots,z_N) \in \C^N$ of the non-Hermitian Wishart matrix \eqref{def of nWishart} follow the joint probability distribution
\begin{align}
d \P_N^{\rm c}(\boldsymbol{z}) & :=\frac{1}{Z_N^{\rm c} } \prod_{j>k=1}^N |z_j-z_k|^{2} \prod_{j=1}^N \omega^{ \rm c }_N(z_j) \, dA(z_j), \label{Gibbs complex}
\\
d \P_N^{ \rm s }(\boldsymbol{z}) & :=\frac{1}{Z_N^{ \rm s } } \prod_{j>k=1}^N |z_j-z_k|^{2} |z_j-\overline{z}_k|^2 \prod_{j=1}^N  |z_j-\overline{z}_j|^2 \omega^{ \rm s }_N(z_j)\, dA(z_j), \label{Gibbs symplectic}
\end{align}
respectively, where $dA(z)=d^2z/\pi$ is the area measure, and $Z_N^{\rm c}$ and $Z_N^{ \rm s } $ are the partition functions that make \eqref{Gibbs complex} and \eqref{Gibbs symplectic} probability measures.
Contrary to the matrix model realisation, for the ensembles \eqref{Gibbs complex} and \eqref{Gibbs symplectic}, one can consider general real-valued $\nu$, as long as $\nu > -1$ for the complex case and $\nu > -1/2$ for the symplectic case. 
Note that a characteristic difference between \eqref{Gibbs complex} and \eqref{Gibbs symplectic} is the repulsion along the real axis that comes from the factor proportional to $|z_j-\overline{z}_j|^2$, see Figure~\ref{Fig_droplet}.  

Due to the above Boltzmann-Gibbs measure realisations together with the standard equilibrium convergence (see e.g. \cite{BC12,Am21,CGZ14}), the macroscopic behaviours of $\boldsymbol{z}$ can be described using the logarithmic potential theory \cite{ST97}. 
The associated planar equilibrium measure problem was solved in \cite{ABK21}. 
Consequently, it follows that for any fixed $\nu$, the empirical measure $\frac{1}{N} \sum_{j=1}^N \delta_{ z_j }$ weakly converges to  
\begin{equation} \label{MeasMu}
d\mu(z) = \frac{1}{2(1-\tau^2)}  \frac{1}{ |z|  }\cdot \mathbbm{1}_{ S }(z) \, dA(z).
\end{equation}
Here, the limiting spectrum $S$ is closed by the ellipse 
\begin{equation} \label{S droplet}
S := \Big\{ (x,y) \in \R^2:  \Big( \frac{x-2\tau   }
	{ 1+\tau^2  }   \Big)^2+\Big( \frac{y}{ 1-\tau^2 }  \Big) ^2 \le 1 \Big\},
\end{equation}
see Figure~\ref{Fig_droplet}.
This limiting distribution \eqref{MeasMu} provides a non-Hermitian generalisation of the classical Marchenko-Pastur law. 
We mention that the results in \cite{ABK21} indeed cover the case $\nu=O(N)$ as well. 
We also refer to \cite{Ka23} for the universality of the limiting law \eqref{MeasMu} for the square ($\nu=0$) and complex case.

\begin{figure}[t]
	\begin{subfigure}{0.48\textwidth}
		\begin{center}	
			\includegraphics[width=\textwidth]{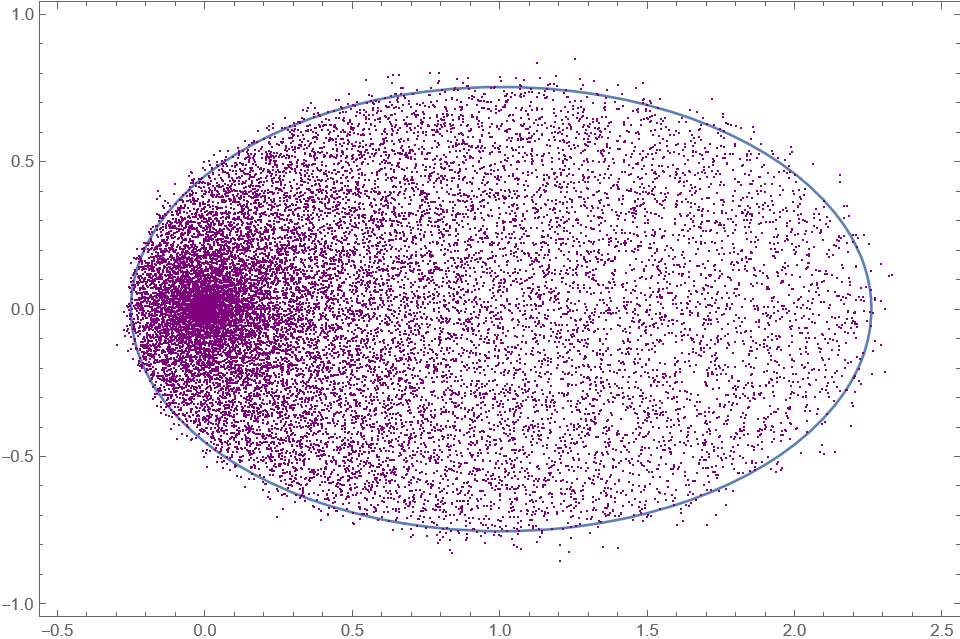}
		\end{center}
		\subcaption{Complex}
	\end{subfigure}	
 \quad
	\begin{subfigure}{0.48\textwidth}
		\begin{center}	
			\includegraphics[width=\textwidth]{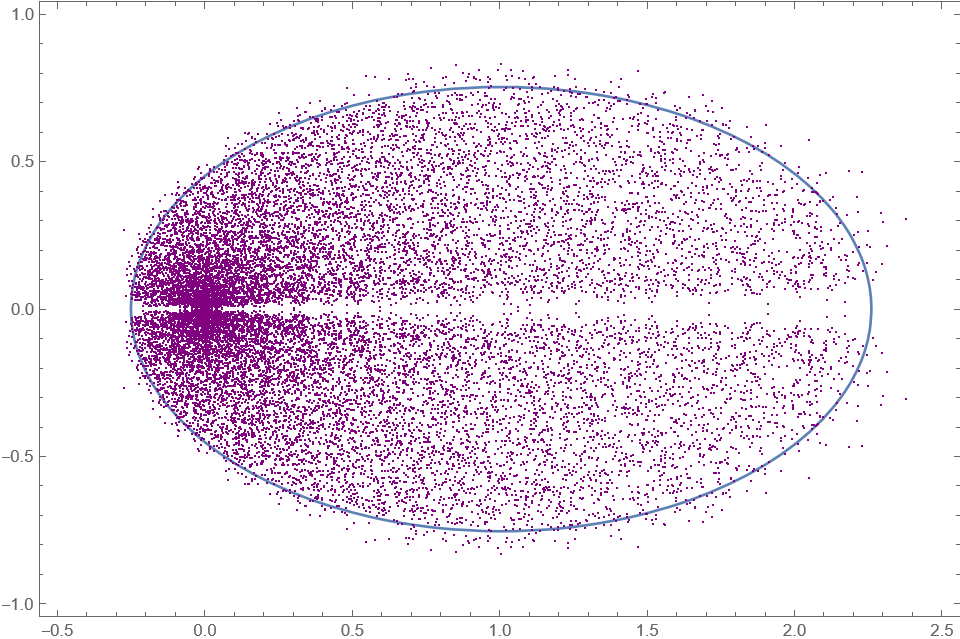}
		\end{center}
		\subcaption{Symplectic}
	\end{subfigure}	
	\caption{100 realisations of eigenvalues of the complex and symplectic non-Hermitian Wishart matrices, where $\tau=0.5$, $\nu=1$ and $N=200$. Here, the full line represents the ellipse given in \eqref{S droplet}. 
    In both (A) and (B), the accumulation of eigenvalues near the origin is evident, cf. \eqref{MeasMu}. Additionally, in (B), one can observe repulsion along the real axis. } \label{Fig_droplet}
\end{figure}

\medskip 

Beyond the averaged eigenvalue density, more detailed statistical behaviours of the eigenvalues are encoded in the $k$-point correlation functions 
\begin{align}
\bfR_{N,k}^{ \rm c }(z_1,\dots,z_k)&:= \frac{N!}{(N-k)!} \int_{ \C^{ N-k } } \P_N^{ \rm c }( \boldsymbol{z} ) \prod_{j=k+1}^N \,dA(z_j),  \label{def of RNk complex}
\\
\bfR_{N,k}^{ \rm s }(z_1,\dots,z_k)&:= \frac{N!}{(N-k)!} \int_{ \C^{ N-k } } \P_N^{ \rm s }( \boldsymbol{z} ) \prod_{j=k+1}^N \,dA(z_j). \label{def of RNk symplectic}
\end{align}
It is well known that the ensemble \eqref{Gibbs complex} forms a determinantal point process whose correlation kernel can be constructed using planar orthogonal polynomials. 
Similarly, the ensemble \eqref{Gibbs symplectic} forms a Pfaffian point process, and in this case the associated correlation kernel can be constructed using planar skew-orthogonal polynomials.

It is a surprising fact, especially given the seemingly complicated forms of the weight functions \eqref{weight functions}, that one can express the associated (skew)-orthogonal polynomials in terms of the generalised Laguerre polynomial
\begin{equation}
L_j^{(\nu)}(z):=\sum_{k=0}^{j}\frac{\Gamma(j+\nu+1)}{(j-k)!\Gamma(\nu+k+1)}\frac{(-z)^k}{k!}. 
\end{equation}
Note that this shows a common feature with the extremal case when $\tau=1$, the LUE and LSE \cite{AFNV00}.  
As a consequence, by combining the results in \cite{Os04,Ak05,AEP22}, the correlation kernels can be expressed in terms of the Laguerre polynomials, see Subsection~\ref{Subsec_Laguerre} for more details. 

\medskip 

In this wok, we investigate various scaling limits of correlation functions \eqref{def of RNk complex} and \eqref{def of RNk symplectic}. 
For this purpose, for a given zooming point $p \in S \setminus \{0\}$, let
\begin{equation} \label{def of delta Lap Q}
\delta \equiv \delta(p) = \frac{1}{2(1-\tau^2)|p|} 
\end{equation}
be the mean eigenvalue density, cf. \eqref{MeasMu}. Then we define the rescaled $k$-point correlation functions 
\begin{align}
R_{N,k}^{ \rm c }(z_1,\cdots,z_k)& =  \displaystyle \frac{1}{(N\delta)^{k}} \bfR_{N,k}^{ \rm c }\Big(p+ \frac{\mathbf{n}(p)\, z_1}{ \sqrt{N \delta} },\cdots, p+ \frac{\mathbf{n}(p)\, z_k}{ \sqrt{N \delta} } \Big),    \label{RNk rescaled complex}
\\
R_{N,k}^{ \rm s }(z_1,\cdots,z_k)& =  \displaystyle \frac{1}{(N\delta)^{k}} \bfR_{N,k}^{ \rm s }\Big(p+ \frac{\mathbf{n}(p)\, z_1}{ \sqrt{N \delta} },\cdots, p+ \frac{\mathbf{n}(p)\, z_k}{ \sqrt{N \delta} } \Big),    \label{RNk rescaled symplectic}
\end{align}
where the $\mathbf{n}(p)$ is the outward unit normal vector from $\partial S$ at $p$, and otherwise, $\mathbf{n}(p)=1$ for $p\in\mathrm{int}(S)$.

In the asymptotic analysis of the non-Hermitian Wishart models, one needs to distinguish the following two different regimes.
\begin{itemize}
    \item[(i)] \textbf{Strong non-Hermiticity.} This is the case that $\tau \in [0,1)$ is fixed. In this case, the limiting spectrum is given by \eqref{S droplet}, which is a genuinely two-dimensional subset of $\C$.
    \smallskip 
    \item[(ii)] \textbf{Weak non-Hermiticity.} This is the case that $\tau \uparrow 1$ as $N\to \infty$ with a proper speed depending on the position of the zooming point. This critical regime gives rise to an interpolation between the one- and two-dimensional point processes. 
\end{itemize}
Furthermore, from the viewpoint of the universality classes, it is also necessary to distinguish cases based on the zooming points where we look at the local statistics.
\begin{itemize}
    \item[(a)] \textbf{Singular origin.} This is the case $p=0$ where the density at the origin diverges \eqref{MeasMu}. Due to this fact, it gives rise to a non-standard universality classes in two-dimensional point process.
    \smallskip 
    \item[(b)] \textbf{Regular bulk.} This is the case where $p$ is the interior of the droplet. For the complex ensemble, one would expect the universality classes  \cite{Gin65,AHM11,FKS97,ACV18} to arise, and for the symplectic ensemble, one would expect the universality classes in \cite{Kan02, BES23, AKMP19}. 
    \smallskip 
    \item[(c)] \textbf{Regular edge.} This is the case where $p$ is on the boundary of the droplet. 
    In this regime, one would expect the universality classes in \cite{FH99,HW21, CES21, Be10, ACC23a} for the complex ensemble, and those in \cite{ABK22, AP14, BES23} for the symplectic ensemble. 
\end{itemize}
In summary, there are $6$ different local universality classes for each complex and symplectic non-Hermitian Wishart ensembles. Among these 12 different regimes, 6 of them have been investigated in the literature \cite{Os04, Ak05, AB10, AEP22, AB23}, see Table~\ref{Table_summary}. Let us briefly summarize the previous development in this direction.

\begin{itemize}
    \item The \textbf{origin} scaling limit of the \textbf{complex} ensemble at \textbf{strong} non-Hermiticity is equivalent to the well-known Hardy-Hille formula 
    \begin{equation} \label{HH formula}
\sum_{j=0}^{\infty}\frac{j!\,\tau^{2j}}{\Gamma(j+\nu+1)} L_{j}^{(\nu)}\Bigl(\frac{z}{\tau}\Bigr) L_{j}^{(\nu)}\Bigl(\frac{w}{\tau}\Bigr) =\frac{ e^{-\frac{\tau}{1-\tau^2}(z+w) } }{(1-\tau^2)(zw)^{\nu/2}}I_{\nu}\Bigl(\frac{2\sqrt{zw}}{1-\tau^2}\Bigr).
\end{equation}
    \item In \cite{Os04}, Osborn derived the \textbf{origin} scaling limit of the \textbf{complex} ensemble at \textbf{weak} non-Hermiticity. 
    The resulting process provides a non-Hermitian extension of the hard edge Bessel point process of the LUE. 
    For this, the Bessel $J_\nu$ asymptotic formula of the Laguerre polynomial at the origin and the Riemann sum approximation were used. 
    \smallskip 
    \item In \cite{Ak05}, Akemann obtained the \textbf{origin} scaling limit of the \textbf{symplectic} ensemble at \textbf{weak} non-Hermiticity. 
    This provides a non-Hermitian extension of the hard edge Bessel point process of the LSE. 
    For this, a certain differential equation for the large-$N$ limit of the kernel was found and utilized.
    \smallskip 
    \item In \cite{AB10}, Akemann and Bender obtained the \textbf{edge} scaling limit of the \textbf{complex} ensemble at \textbf{weak} non-Hermiticity in the Dirac picture \eqref{def of Dirac}. 
    The resulting process coincides with a non-Hermitian extension of the soft edge Airy point process of the GUE previously found in \cite{Be10} for the elliptic GinUE. 
    In this case, the scaling limit was obtained using a double contour integral representation of the kernel and the steepest descent analysis.
    \smallskip 
    \item In \cite{AEP22}, Akemann, Ebke and Parra obtained the \textbf{origin} scaling limit of the \textbf{symplectic} ensemble at \textbf{strong} non-Hermiticity. 
    In this case, a double contour integral formula of the kernel and the differential equation found in \cite{Ak05} to investigate the maximally non-Hermitian regime ($\tau=0$) were utilized. 
    \smallskip 
    \item In \cite{AB23}, Ameur and Byun obtained the \textbf{bulk} scaling limit of the \textbf{complex} ensemble at \textbf{weak} non-Hermiticity. 
    The resulting process coincides with a non-Hermitian extension of the sine point process of the GUE introduced in \cite{FKS97,FKS97a,FKS98} for the elliptic GinUE. 
    In \cite{AB23}, the authors made use of the theory of Ward's equations \cite{AKM19,AKMW20} together a contour integral representation of the kernel. 
\end{itemize}

\begin{table}[h!]
    \centering
   {\def\arraystretch{2}
\begin{tabular}{ |p{1.5cm}|p{1.5cm}|p{1.5cm}|p{1.5cm}|  }
\hline
\multicolumn{4}{|c|}{\textbf{Complex Non-Hermitian ensemble} } 
\\
\hline
& \centering \textup{Origin} & \centering \textup{Bulk} &  \hspace{0.35cm}\textup{Edge} 
\\ 
\hline 
\centering Strong & \centering \eqref{HH formula} & \centering  Unknown & Unknown
 \\ 
 \hline
\centering Weak & \centering \cite{Os04} & \centering \cite{AB23} & \hspace{0.35cm} \cite{AB10} 
 \\ 
   \hline 
\end{tabular}
\qquad 
\begin{tabular}{ |p{1.5cm}|p{1.5cm}|p{1.5cm}|p{1.5cm}|  }
\hline
\multicolumn{4}{|c|}{\textbf{Symplectic Non-Hermitian ensemble} } 
\\
\hline
& \centering \textup{Origin} & \centering \textup{Bulk} &  \hspace{0.35cm}\textup{Edge} 
\\ 
\hline 
\centering Strong & \centering \cite{AEP22} & \centering Unknown  & Unknown
 \\ 
 \hline
\centering Weak & \centering \cite{Ak05} & \centering Unknown & Unknown
 \\ 
   \hline 
\end{tabular}
}
\caption{Summary of the previous results on scaling limits of complex/symplectic non-Hermitian Wishart ensembles. All of these unknown regimes are addressed in Theorems~\ref{Thm_StrongNonHermiticity}, ~\ref{Thm_BulkWeakNonHermiticity} and ~\ref{Thm_EdgeWeakNonHermiticity} below. 
}
\label{Table_summary}
\end{table}

Other than the aforementioned cases, the scaling limits of non-Hermitian Wishart ensembles remain undiscovered, despite their intrinsic interest, particularly from the viewpoint of the universality principle \cite{Ku11}. 
In this work, we introduce a unified framework for analyzing the correlation functions (Theorem~\ref{Thm_ODE}) for finite-$N$. 
As a consequence, we obtain the scaling limits of the complex and symplectic non-Hermitian Wishart ensembles for all unknown regimes, i.e. both strong and weak non-Hermiticity, as well as both bulk and edge scaling regimes (Theorems~\ref{Thm_StrongNonHermiticity}, \ref{Thm_BulkWeakNonHermiticity}, and \ref{Thm_EdgeWeakNonHermiticity}).
Indeed, our method can also be employed to re-derive the results in \cite{Os04, Ak05, AB10, AEP22, AB23} in a unified manner. An additional advantage of our method is that it does not require $\nu$ to be an integer, unlike the method using the contour integral representation.
Furthermore, compared to the previous methods, our method is more easy to track the error terms, yielding finite-size corrections as well.

\subsection{Main results}

In this section, we introduce our main results.

Due to the lack of the classical Christoffel-Darboux formula for non-Hermitian random matrix ensembles, one should find a proper way of analyzing the correlation kernel. 
Our first result provides a differential equation satisfied by the correlation kernels of the Wishart ensembles. 
As will become clear, this turns out to be very helpful for proceeding with further asymptotic analysis.

\begin{thm}[\textbf{Differential equations for correlation kernels of planar Wishart ensembles}] \label{Thm_ODE}
For any $\tau \in [0,1]$, $\nu \in \R$ and $N \in \mathbb{N}$, we have the following. 
\begin{itemize}
    \item[(i)] \textbf{\textup{(Complex ensemble)}} 
Let 
\begin{equation}  \label{def of mathcalK}
\mathcal{K}_N(z,w) \equiv \mathcal{K}_N^{(\nu)}(z,w):=  \sum_{j=0}^{N-1}\frac{j!\, \tau^{2j} }{\Gamma(j+\nu+1)}
L_{j}^{(\nu)}\Bigl(\frac{z}{\tau}\Bigr) L_{j}^{(\nu)}\Bigl(\frac{w}{\tau}\Bigr) . 
\end{equation}
Then we have 
\begin{align}
\begin{split} 
&\quad \bigg[ (1-\tau^2)\,z\,\partial_{z}^2  +\Big((1-\tau^2)(\nu+1)+2\tau z \Big)\partial_{z} + \frac{ \tau^2 z -w}{1-\tau^2}+(\nu+1)\tau \bigg]\cK_N^{(\nu)}(z,w)
\\
&= \frac{N!}{\Gamma(N+\nu)} \frac{\tau^{2N-1}}{1-\tau^2} 
 \bigg[ 
L_{N-1}^{(\nu)}\Bigl(\frac{z}{\tau}\Bigr)L_{N}^{(\nu)}\Bigl(\frac{w}{\tau}\Bigr)
-
\tau^2L_{N}^{(\nu)}\Bigl(\frac{z}{\tau}\Bigr)L_{N-1}^{(\nu)}\Bigl(\frac{w}{\tau}\Bigr)
 \bigg]. 
\end{split}
\end{align}

\smallskip 
\item[(ii)] \textbf{\textup{(Symplectic ensemble)}} 
Let 
\begin{equation} \label{def of kappaN main}
\kappa_N(z,w) \equiv \kappa_N^{(\nu)}(z,w) := G_N(z,w)-G_N(w,z), 
\end{equation}
where 
\begin{equation}
G_N(z,w):= -  \frac{\sqrt{\pi}}{2^{2\nu+1}} \sum_{k=0}^{N-1}\sum_{j=0}^{k}\frac{(2k)!!\,\tau^{2k+1}}{2^k\Gamma(k+\nu+3/2)}
\frac{(2j-1)!!\,\tau^{2j}}{2^j\Gamma(j+\nu+1)}L_{2k+1}^{(2\nu)}\Bigl(\frac{z}{\tau}\Bigr)L_{2j}^{(2\nu)}\Bigl(\frac{w}{\tau}\Bigr).
\end{equation}
Then we have 
\begin{align}
\begin{split} \label{ODE for kappaN}
&\quad \bigg[(1-\tau^2)z\, \partial_{z}^2
+\Big( (2\nu+1)(1-\tau^2)+2\tau z \Big)\partial_{z}
+\tau(2\nu+1)-z \bigg]  \kappa_N^{(\nu)}(z,w)
\\ 
& =  \cK_{2N}^{(2\nu)}(z,w)  - \frac{  \sqrt{\pi}\, N!\, \tau^{2N}}{2^{2\nu} \Gamma(N+\nu+1/2)} \,L_{2N}^{(2\nu)}\Bigl(\frac{z}{\tau} \Bigr)
\sum_{j=0}^{N-1}\frac{(2j-1)!!\tau^{2j}}{2^j\Gamma(j+\nu+1)}L_{2j}^{(2\nu)}\Bigl(\frac{w}{\tau}\Bigr).
\end{split}
\end{align}
\end{itemize}
\end{thm}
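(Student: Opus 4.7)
The plan is to exploit the classical Laguerre ODE and three-term recurrence to compute the action of the second-order operator on each summand of the kernel, and then collapse the resulting sum via a Christoffel--Darboux-style telescoping.

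For part~(i), let $f_j(z) := L_j^{(\nu)}(z/\tau)$ and $c_j := j!\,\tau^{2j}/\Gamma(j+\nu+1)$. The starting point is that $f_j$ satisfies $\mathcal{L}_\tau^{(\nu)} f_j = -j f_j$ with $\mathcal{L}_\tau^{(\nu)} := \tau z\,\partial_z^2 + ((\nu+1)\tau - z)\,\partial_z$, and the operator on the left-hand side splits as
\[
\mathcal{D}_z = \tfrac{1-\tau^2}{\tau}\mathcal{L}_\tau^{(\nu)} + \tfrac{(1+\tau^2)z}{\tau}\partial_z + \tfrac{\tau^2 z - w}{1-\tau^2} + (\nu+1)\tau.
\]
Applying $\mathcal{D}_z$ to $f_j(z)$, using the lowering identity $z f_j'(z) = j f_j(z) - (j+\nu) f_{j-1}(z)$, multiplying by $c_j f_j(w)$ and summing over $j = 0,\dots,N-1$, and finally rewriting the $2\tau j$ term via the three-term recurrence $(2j+\nu+1)f_j(z) = (j+1)f_{j+1}(z) + (j+\nu)f_{j-1}(z) + (z/\tau)\,f_j(z)$, one arrives at
\[
\mathcal{D}_z \mathcal{K}_N = \tfrac{z-w}{1-\tau^2}\,\mathcal{K}_N + \tau\sum_{k=1}^{N-1} kc_{k-1}\bigl[f_k(z)f_{k-1}(w) - f_{k-1}(z)f_k(w)\bigr] + \tau N c_{N-1}\, f_N(z) f_{N-1}(w).
\]
The last step is to invoke the classical Christoffel--Darboux identity for $L_j^{(\nu)}$ in the variables $x = z/\tau,\ y = w/\tau$, which expresses $(z-w)\mathcal{K}_N/\tau$ as $(\tau^2 - 1)$ times the same bracketed bulk sum minus $N c_{N-1}[f_N(z)f_{N-1}(w) - f_{N-1}(z)f_N(w)]$. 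Substituting back eliminates the bulk sum, and using $\tau N c_{N-1} = N!\,\tau^{2N-1}/\Gamma(N+\nu)$ produces precisely the stated right-hand side.

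For part~(ii), the same philosophy applies but with index $2\nu$. Setting $g_m(z) := L_m^{(2\nu)}(z/\tau)$, a parallel calculation (Laguerre ODE, lowering identity, three-term recurrence) collapses to the crisp formula
\[
\mathcal{O}_z g_m(z) = \tau(m+1)\, g_{m+1}(z) - \tfrac{m+2\nu}{\tau}\, g_{m-1}(z),
\]
so $\mathcal{O}_z$ exchanges even- and odd-indexed polynomials. Splitting $\mathcal{O}_z\kappa_N = \mathcal{O}_z G_N(z,w) - \mathcal{O}_z G_N(w,z)$ therefore separates contributions into products of two even-indexed $g$'s (from the first piece) and two odd-indexed $g$'s (from the second). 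The remaining task is to verify that (a) the diagonal contributions $g_m(z)g_m(w)$ sum to $\mathcal{K}_{2N}^{(2\nu)}(z,w)$, and (b) the off-diagonal contributions cancel except for the $k = N-1$ row of the even--even piece, which survives as the stated $g_{2N}(z)\sum_j b_j g_{2j}(w)$ boundary. Step~(a) ultimately rests on the Legendre duplication formula
\[
\Gamma(2m+2\nu+1)\sqrt{\pi} = 2^{2m+2\nu}\,\Gamma(m+\nu+\tfrac{1}{2})\,\Gamma(m+\nu+1),
\]
which converts the prefactor products $a_k b_j$ (with $a_k, b_j$ the coefficients in $G_N$) at the diagonal into $m!\,\tau^{2m}/\Gamma(m+2\nu+1)$. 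Step~(b) comes from the compatibility of the ratios $a_{k+1}/a_k$ and $b_{j+1}/b_j$ with the multipliers $\tau(m+1)$ and $(m+2\nu)/\tau$ appearing in $\mathcal{O}_z g_m$.

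The principal obstacle lies in part~(ii): the triangular double sum $0 \leq j \leq k \leq N-1$ demands careful reindexing to collect the coefficient of each $g_l(z)g_{l'}(w)$ pair. In particular, one must check that the odd--odd side leaves no boundary residue (thanks to $g_{-1} = 0$ and the constraint $j \leq k$) while the even--even side leaves precisely the single uncancelled $g_{2N}(z)$ contribution. No individual step involves anything beyond standard Laguerre identities, but coordinating them so that the duplication formula cleanly converts the two-parameter prefactors into the one-parameter Laguerre norming factors across both parity classes is where the technical weight of the proof resides.
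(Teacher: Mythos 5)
Your argument is correct and takes a genuinely different route from the paper's. The paper proves (i) by gauge-transforming to $\widehat{\mathcal{K}}_N := e^{\tau(z+w)/(1-\tau^2)}\mathcal{K}_N$ and then computing $\partial_z\widehat{\mathcal{K}}_N$ and $\partial_z^2\widehat{\mathcal{K}}_N$ directly, repeatedly feeding in the Laguerre recurrence and differentiation rules and combining by hand; the proof of (ii) is likewise a long direct computation on each double sum. You instead isolate the Laguerre-ODE component of the second-order operator, which reduces its action on each $L_j^{(\nu)}(z/\tau)$ to a clean index-shift formula: in (i) you get $\mathcal{D}_z f_j = \tau(j+1)f_{j+1} - \frac{j+\nu}{\tau}f_{j-1} + \frac{z-w}{1-\tau^2}f_j$, and in (ii) the remarkably simple $\mathcal{O}_z g_m = \tau(m+1)g_{m+1} - \frac{m+2\nu}{\tau}g_{m-1}$. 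This turns both proofs into telescoping arguments: in (i) the deformed Christoffel--Darboux identity $\frac{z-w}{\tau}\mathcal{K}_N = (\tau^2-1)\sum_{k=1}^{N-1}kc_{k-1}D_k - Nc_{N-1}D_N$ (where $D_k = f_k(z)f_{k-1}(w)-f_{k-1}(z)f_k(w)$, a direct consequence of the three-term recurrence together with $(j+\nu)c_j = j\tau^2 c_{j-1}$) lets you eliminate the bulk sum exactly and recover the stated boundary term after simplification; in (ii) the parity-swap structure of $\mathcal{O}_z$ makes the double sum collapse row by row, with the coefficient ratios $a_{k}/a_{k-1}$ and $b_{j+1}/b_j$ annihilating all off-diagonal contributions, the Legendre duplication identity converting $a_m b_m$ into the GinUE normalization $m!\tau^{2m}/\Gamma(m+2\nu+1)$ on the diagonal, and the single surviving $k=N-1$ row producing the stated boundary piece. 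What your approach buys over the paper's is transparency: it makes explicit why a second-order operator produces a Christoffel--Darboux-style boundary term (the ODE gives the $-j$ eigenvalue, the remainder is a pure shift), and in (ii) it makes immediately visible that $\mathcal{O}_z$ swaps parity and hence converts the $g_{2k+1}g_{2j}$ bilinear into the even--even and odd--odd diagonal sums reconstructing $\mathcal{K}_{2N}^{(2\nu)}$. The one thing you leave as an outline in (ii) is the reindexing bookkeeping (cancellation of off-diagonal terms, absence of residue on the odd--odd side from $g_{-1}=0$ and $j\le k$, survival of only the $k=N-1$ even--even row); these do check out via the coefficient ratios $a_{k'}/a_{k'-1} = 2k'\tau^2/(2k'+2\nu+1)$ and $b_{j'+1}/b_{j'} = (2j'+1)\tau^2/(2j'+2\nu+2)$, but a full write-up should carry them out explicitly.
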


Up to a proper scaling, $\mathcal{K}_N$ in \eqref{def of mathcalK} is the correlation kernel of the complex Wishart ensemble, forming a determinantal point process. Similarly, $\kappa_N$ in \eqref{def of kappaN main} is a pre-kernel of a $2\times 2$ matrix-valued kernel of the symplectic Wishart ensemble, forming a Pfaffian point process, see Proposition~\ref{Prop_Det Pfa struc}.

From the viewpoint of the special function theory, Theorem~\ref{Thm_ODE} provides certain functional identities involving the Laguerre polynomials, which are new to our best knowledge. 
We stress that it is indeed far from being obvious that there exists such functional identities.

The idea of constructing a differential equation for the correlation kernel was employed by Lee and Riser in their study of the elliptic GinUE \cite{LR16}, see also \cite{BLY21}. 
Furthermore, this method proves particularly helpful in the asymptotic study of the planar symplectic ensembles. 
We refer to \cite{Kan02,ABK22} for the implementation in the study of the GinSE, \cite{BE23} for its extension to the elliptic GinSE, \cite{BC23} for the induced GinSE, \cite{BF23} for the truncated ensemble, and \cite{BF23a} for the induced spherical GinSE.
In all of these works, a specific first-order differential equation for each model was constructed. 

It is also worth highlighting that in the study of planar symplectic ensembles \cite{ABK22, BE23, BC23, BF23a}, the inhomogeneous term in the differential equation involves the correlation kernel of their complex counterparts. 
This common feature can also be observed in \eqref{ODE for kappaN}, where the term $\mathcal{K}_{2N}^{(2\nu)}$ appears. 
Such a relation between correlation functions of unitary and symplectic ensembles was also found in \cite{AFNV00,Wi99} for Hermitian random matrix ensembles, see also \cite{Li23} for a recent work.   
This relation was crucially used for the universality of Hermitian random matrix ensembles with symplectic symmetry \cite{DG07a,DG07b}.

\bigskip 

We now turn to the scaling limits of the non-Hermitian Wishart ensembles. Let us first discuss the strong non-Hermitian regime. Note that, by \eqref{S droplet}, the intersection of the droplet $S$ with the real axis is given by 
\begin{equation}
S \cap \R = [e_-,e_+ ],  \qquad e_{\pm}:= 2\tau \pm (1+\tau^2). 
\end{equation}
In the asymptotic analysis of planar symplectic ensembles, the behaviour varies depending on proximity to the real axis. 
Namely, the self-interaction term $|z_j-\overline{z}_j|^2$ in \eqref{Gibbs symplectic} does not affect the local behaviour away from the real axis in the large $N$ limit. 
We shall focus on the real axis case for the planar symplectic ensemble. 
On the other hand, for the complex ensemble, there is no speciality of the real axis and we consider the general zooming points in the complex plane.
In summary, we shall use the following terminology. 
\begin{itemize}
    \item Bulk case: $p \in \textup{int}(S) \setminus \{0\}$ for the complex ensemble, $p \in (e_-,e_+) \setminus \{0\}$ for the symplectic ensemble. 
    \smallskip 
    \item Edge case: $p \in \partial S$ for the complex ensemble, $p \in \{ e_-,e_+\}$ for the symplectic ensemble.
\end{itemize}
We have the following results, which demonstrate the universal scaling limits.

\begin{thm}[\textbf{Bulk/edge scaling limits of planar Wishart ensembles at strong non-Hermiticity}]
\label{Thm_StrongNonHermiticity}
Let $\tau \in [0,1)$ be fixed. 
Then we have the following. 
\begin{itemize}
    \item[(i)] \textbf{\textup{(Complex ensemble)}} For any $\nu>-1$, we have 
\begin{equation}
\lim_{N \to \infty} R_{N,k}^{ \rm c }(z_1,\dots,z_k) = \det \Big[ K(z_j,z_l) \Big]_{j,l=1}^k, 
\end{equation}
uniformly on compact subsets of $\C$, where 
\begin{align}
 \label{GinUE bulk kernel}
&K(z,w) = K_{ \rm b }(z,w):=  e^{ z\overline{w}-\frac{|z|^2}{2}-\frac{|w|^2}{2} }, \qquad \textup{for the bulk case,}
\\
&  \label{GinUE edge kernel}
K(z,w) = K_{ \rm e }(z,w):= e^{ z\overline{w}-\frac{|z|^2}{2}-\frac{|w|^2}{2} } \frac12 \erfc\Big( \frac{z+\overline{w}}{ \sqrt{2} } \Big), \qquad \textup{for the edge case.}
\end{align}
 \item[(ii)] \textbf{\textup{(Symplectic ensemble)}} For any $\nu>-1/2$, we have 
\begin{equation}
\lim_{N \to \infty} R_{N,k}^{ \rm s }(z_1,\dots,z_k) = \prod_{j=1}^k (\overline{z}_j-z_j) \, \Pf \bigg[ e^{-|z_j|^2-|z_l|^2} 
\begin{pmatrix} 
\kappa(z_j,z_l) & \kappa(z_j,\overline{z}_l)
\smallskip 
\\
\kappa(\overline{z}_j,z_l) & \kappa(\overline{z}_j,\overline{z}_l) 
\end{pmatrix} 
\bigg]_{j,l=1}^k, 
\end{equation}
uniformly on compact subsets of $\C$, where 
\begin{align}
 \label{GinSE bulk kernel}
&	\kappa(z,w)= \kappa_{ \rm b }(z,w) := \sqrt{\pi} e^{z^2+w^2} \erf(z-w), \qquad \textup{for the bulk case},
\\
& \label{GinSE edge kernel}
\kappa(z,w) = \kappa_{ \rm e }(z,w) :=\sqrt{\pi} e^{z^2+w^2} \int_{-\infty}^0 W(f_{w},f_{z})(u) \, du,  \qquad \textup{for the edge case}.
\end{align} 
Here $W(f,g):=fg'-gf'$ is the Wronskian and $f_z(u):=\frac12 \erfc(\sqrt{2}(z-u)). $ 
\end{itemize}
\end{thm}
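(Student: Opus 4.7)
The plan is to use Theorem~\ref{Thm_ODE} as a finite-$N$ analytic handle on the correlation kernels, perform the rescaling $z=p+\mathbf{n}(p)\zeta/\sqrt{N\delta}$ and $w=p+\mathbf{n}(p)\eta/\sqrt{N\delta}$, and identify the scaling limits as the unique solutions of the resulting limiting differential equations. To obtain a nondegenerate limit I would work not with the polynomial objects $\mathcal{K}_N$ or $\kappa_N$ alone, but with the full weighted kernels $\sqrt{\omega^{\rm c}_N(z)\omega^{\rm c}_N(w)}\,\mathcal{K}_N(z,w)$ and the symplectic analogue, adjusted by a unit-modulus cocycle that does not affect the correlation functions~\eqref{RNk rescaled complex}--\eqref{RNk rescaled symplectic}. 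The weight functions~\eqref{weight functions} are expanded using the asymptotic $K_\nu(x)\sim\sqrt{\pi/(2x)}\,e^{-x}$ as $x\to\infty$, which together with the factor $e^{BN\re z}$ produces a gaussian-like profile concentrated on the droplet~\eqref{S droplet}.

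For the complex case, I would substitute the scaling into the ODE of Theorem~\ref{Thm_ODE}(i) and perform a WKB-type gauge shift by the square-root weight; the terms of order $N\delta$ (from $z\,\partial_z^2$) and of order $\sqrt{N\delta}$ (from $\partial_z$) then cancel systematically, reducing the equation in the limit to a first-order equation in $\zeta$ (with $\eta$ as parameter). The right-hand side, which involves $L_{N-1}^{(\nu)}(z/\tau)L_N^{(\nu)}(w/\tau)$ together with its swap, is analyzed by Plancherel--Rotach-type asymptotics for the generalised Laguerre polynomials. In the bulk regime $p\in\mathrm{int}(S)\setminus\{0\}$ this inhomogeneous term becomes exponentially small once the rescaled weight is absorbed, and the resulting homogeneous limit ODE admits $K_{\rm b}$ as its unique solution subject to the one-point density normalisation. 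In the edge regime $p\in\partial S$ the asymptotics yield a non-vanishing Gaussian inhomogeneous term, and integrating the first-order ODE in $\zeta$ against this term produces the $\erfc$-modification that gives $K_{\rm e}$.

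For the symplectic case, the same rescaling applied to Theorem~\ref{Thm_ODE}(ii) yields a limiting first-order ODE in $\zeta$ for the rescaled pre-kernel. Its inhomogeneous term has two pieces: first, $\mathcal{K}_{2N}^{(2\nu)}$ at the scaled arguments, whose rescaled limit is obtained from the complex case~(i) with $N$ replaced by $2N$ and $\nu$ by $2\nu$, yielding $K_{\rm b}$ or $K_{\rm e}$ according to the location of $p$; second, a term proportional to $L_{2N}^{(2\nu)}(z/\tau)$ times the partial sum over $L_{2j}^{(2\nu)}(w/\tau)$, whose rescaled limit is again computed from the Laguerre asymptotics and, since the symplectic zooming is restricted to the real axis $(e_-,e_+)$ or its endpoints, produces a real Gaussian profile. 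The antisymmetry of $\kappa_N^{(\nu)}$ in~\eqref{def of kappaN main} together with decay as $\zeta-\eta\to\infty$ singles out $\kappa_{\rm b}$ in the bulk, which one verifies by direct substitution using $\erf'(x)=(2/\sqrt\pi)e^{-x^2}$, and $\kappa_{\rm e}$ at the edge, where the Wronskian formula involving $f_z(u)=\tfrac12\erfc(\sqrt{2}(z-u))$ matches the integrated form of the inhomogeneous ODE. The passage from pre-kernel convergence to convergence of the correlation functions then follows the standard Pfaffian framework.

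The principal obstacle is the uniform Plancherel--Rotach-type asymptotics for $L_N^{(\nu)}(x/\tau)$ on compact sets in the rescaled variables, to a precision matching the leading order of the rescaled weight; these are available in the Riemann--Hilbert literature on the Laguerre ensemble but must be assembled across the various regimes (bulk interior, boundary edge, and at complex arguments). A secondary technical point is choosing the unit-modulus cocycle at the rescaled scale so that the rescaled kernels converge genuinely rather than merely modulo a phase that cancels in the correlation functions; this is standard but delicate at the edge, where the cocycle involves $\im\zeta$. Once these ingredients are in place, uniqueness of the limiting kernels follows from the first-order limit ODE together with the a priori diagonal bounds coming from the determinantal/Pfaffian structure, and the error terms in the WKB cancellation and Laguerre asymptotics can be tracked to give quantitative rates of convergence.
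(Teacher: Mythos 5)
Your plan matches the paper's proof in all essentials: construct a weighted, cocycle-normalised version of the correlation kernel, substitute the rescaling into Theorem~\ref{Thm_ODE}, use Plancherel--Rotach and strong (exponential-regime) asymptotics of the Laguerre polynomials on the inhomogeneous term, observe that the first-order part of the ODE dominates, and solve the limiting first-order ODE in $\zeta$ with the appropriate normalisation. You also correctly identify the structural link between the symplectic and complex cases via $\mathcal{K}_{2N}^{(2\nu)}$, and the role of the antisymmetry initial condition $\kappa(\zeta,\zeta)=0$ in solving the limiting Pfaffian ODE.

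There is, however, one technical point you overlook. The second inhomogeneous term in Theorem~\ref{Thm_ODE}(ii) is $L_{2N}^{(2\nu)}(z/\tau)\cdot g_N(w)$ with
$g_N(w)=\sum_{j=0}^{N-1}\frac{(2j-1)!!\tau^{2j}}{2^j\Gamma(j+\nu+1)}L_{2j}^{(2\nu)}(2Nw/\tau)$;
this is a genuine partial sum, not a single Laguerre polynomial, so its rescaled limit cannot be read off ``from the Laguerre asymptotics'' term by term. The paper treats $g_N$ by establishing a \emph{separate} second-order ODE for it (Proposition~\ref{Prop_RemainderODE}) and then integrating, using the crucial initial condition $g_N(0)$-analogue vanishes when $\nu>-1/2$ (which is exactly where the standing hypothesis on $\nu$ enters). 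Relatedly, you predict this term ``produces a real Gaussian profile,'' but in the bulk case it actually vanishes exponentially (Lemma~\ref{Lem_BulkSymp}), while at the edge it produces an $\erfc$--Gaussian (Lemma~\ref{Lem_ES2}), not a plain Gaussian. If you attempted the computation as described you would likely get stuck at estimating $g_N$; the fix is the extra ODE in Proposition~\ref{Prop_RemainderODE}. A further ingredient you treat as folklore — that the inhomogeneity is exponentially small in the bulk and in the exterior — rests on the non-negativity (with equality only on $\partial S$) of the function $\Omega$ in \eqref{OmegaF}, which is the $g$-function inequality underlying the whole steepest-descent picture (Lemma~\ref{Lem_OPOV}); this deserves to be established rather than absorbed into ``once the rescaled weight is absorbed.''
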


Note that $K_{ \rm b }$ in \eqref{GinUE bulk kernel} is the limiting bulk kernel of the GinUE \cite{Gin65}, whereas $K_{ \rm e }$ in \eqref{GinUE edge kernel} is the limiting edge kernel of the GinUE \cite{FH99}. 
We stress that the bulk universality of random normal matrices (i.e. \eqref{Gibbs complex} with a general weight function) was shown in \cite{AHM11}. On the other hand, the edge universality was shown in \cite{HW21}. 
Nonetheless, Theorem~\ref{Thm_StrongNonHermiticity} (i) is not a direct consequence of \cite{AHM11,HW21}, as the weight function \eqref{weight functions} depends highly on $N.$

Regarding the symplectic ensemble, note that $\kappa_{ \rm b }$ in \eqref{GinSE bulk kernel} coincides with the limiting bulk kernel of the GinSE \cite{Kan02,AKMP19}. Similarly, $\kappa_{ \rm e }$ in \eqref{GinSE edge kernel} corresponds to the limiting edge kernel of the GinSE recently found in \cite{ABK22}. In contrast to the random normal matrix ensemble, the universality of the planar symplectic ensemble (i.e. \eqref{Gibbs symplectic} with a general weight function) is much less developed. 
To be more precise, the universal scaling limits \eqref{GinSE bulk kernel} and \eqref{GinSE edge kernel} have been shown only for the GinSE \cite{Kan02,ABK22}, elliptic GinSE \cite{AEP22,BE23}, spherical GinSE \cite{BF23a}, induced GinSE \cite{BC23,BF23}, and truncated ensembles \cite{KL21,BF23}. 
The fundamental reason for this relatively slow progress compared to the random normal matrix model is that there is no general theory on the skew-orthogonal polynomial in the plane. 
Furthermore, once one can construct the skew-orthogonal polynomials, the method for performing proper asymptotic analysis has only recently been developed. 

As a side remark, let us also mention that the limiting correlation kernels of the GinUE and GinSE can be described in a unified manner. 
For this purpose, we write 
\begin{equation}
E:=\begin{cases}
    	    (-\infty,\infty) &\textup{for the bulk case,}
    	    \smallskip 
    	    \\
    	    (-\infty,0) &\textup{for the edge case}.
    	\end{cases}
\end{equation}
Then the GinUE correlation kernels \eqref{GinUE bulk kernel} and \eqref{GinSE edge kernel} can be written as 
\begin{equation}
K(z,w) = e^{ z\overline{w}-\frac{|z|^2}{2}-\frac{|w|^2}{2} }\, \frac{ 1  }{ \sqrt{2\pi} } \int_E e^{ -\frac12( z-\overline{w}-t )^2 }\,dt .
\end{equation} 
Similarly, the GinSE correlation kernels \eqref{GinSE bulk kernel} and \eqref{GinSE edge kernel} can be expressed in terms of the Wronskian form
\begin{equation} \label{kappa Wronskian}
	\kappa(z,w):=\sqrt{\pi} e^{z^2+w^2} \int_{E} W(f_{w},f_{z})(u) \, du. 
\end{equation}  
In these unified formulas, by taking an interval of the form $E=(-\infty,a)$, one can also interpolate bulk and edge scaling limits, see e.g. \cite[Remark 2.3 (iii)]{ABK22}. 

\bigskip 


Next, we discuss the scaling limits at weak non-Hermiticity. 
Our first result gives the bulk scaling limits. For this, let us write 
\begin{equation} \label{MP law}
\sigma_{{\rm MP}}(\xi):=\frac{1}{2\pi }\sqrt{\frac{4-\xi}{\xi}} \,\mathbf{1}_{[0,4]}(\xi)
\end{equation}
for the Marchenko-Pastur law. 
One may expect that the global eigenvalue density of the non-Hermitian Wishart ensemble when $\tau \uparrow 1$ is close to the Marchenko-Pastur law.
We refer to \cite[Section 2]{AB23} for a geometric description on the asymptotic shape of the droplet in the weakly non-Hermitian (or bandlimited Coulomb gas) ensembles. 
For the bulk scaling regime when $p \in (0,4)$, we have the following. 

\begin{thm}[\textbf{Bulk scaling limits of planar Wishart ensembles at weak non-Hermiticity}]
\label{Thm_BulkWeakNonHermiticity}
Let  
\begin{equation}
\label{BulkWeakTau}
 \tau=1- \frac{c^2}{N} , \qquad c \in (0,\infty)  
\end{equation}
and define
\begin{equation}
\label{Bulk Weak ap}
E_a= ( -2a,2a ), \qquad a = a(p)=\pi c\sqrt{p}\,\sigma_{{\rm MP}}(p).
\end{equation}
Suppose that $p \in (0,4)$. Then we have the following. \smallskip 
\begin{itemize}
     \item[(i)] \textbf{\textup{(Complex ensemble)}} For any $\nu>-1$, we have 
\begin{equation} 
\lim_{N \to \infty} R_{N,k}^{ \rm c }(z_1,\dots,z_k) = \det \Big[ K^{c}_{\rm{b}}(z_j,z_l) \Big]_{j,l=1}^k, 
\end{equation}
uniformly on compact subsets of $\C$, where 
\begin{equation} \label{GinUE wH bulk kernel}
K^{c}_{\rm{b}}(z,w) = e^{ z\overline{w}-\frac{|z|^2}{2}-\frac{|w|^2}{2} }\, \frac{ 1  }{ \sqrt{2\pi} } \int_{ E_a } e^{ \frac12( z-\overline{w}-i t )^2 }\,dt .
\end{equation} 
 \item[(ii)] \textbf{\textup{(Symplectic ensemble)}} For any $\nu>-1/2$, we have 
\begin{equation}
\lim_{N \to \infty} R_{N,k}^{ \rm s }(z_1,\dots,z_k) = \prod_{j=1}^k (\overline{z}_j-z_j) \, \Pf \bigg[ e^{-|z_j|^2-|z_l|^2} 
\begin{pmatrix}  
\kappa^c_{ \rm b }(z_j,z_l) & \kappa^c_{ \rm b }(z_j,\overline{z}_l)
\smallskip 
\\
\kappa^c_{ \rm b }(\overline{z}_j,z_l) & \kappa^c_{ \rm b }(\overline{z}_j,\overline{z}_l) 
\end{pmatrix} 
\bigg]_{j,l=1}^k, 
\end{equation}
uniformly on compact subsets of $\C$, where 
\begin{equation}  \label{GinSE wH bulk kernel}
\kappa^c_{ \rm b }(z,w):=	 \frac{1}{\sqrt{\pi}}  e^{ z^2+w^2 }  \int_{E_a} e^{-u^2} \sin(2u(z-w) ) \frac{du}{u}. 
\end{equation} 
\end{itemize}
\end{thm}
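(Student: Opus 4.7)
The strategy, following the ODE method of \cite{LR16,ABK22,BE23}, is to insert the weak non-Hermiticity rescaling into the second-order differential equations of Theorem~\ref{Thm_ODE} and pass to the limit. Under \eqref{BulkWeakTau} one has $1-\tau^2\sim 2c^2/N$ and $\sqrt{N\delta}\sim N/(2c\sqrt{p})$, so the microscopic scale is $1/N$ and the combination $(1-\tau^2)\cdot\zeta\cdot N\delta$ multiplying $\partial_z^2$ after the chain rule tends to the finite constant $1/2$. The divergent $O(\sqrt{N})$ contribution from the first-order term $2\tau\zeta\,\partial_\zeta\mathcal{K}_N$ is cancelled by the log-derivative of the weight $\omega_N^{\rm c}$ (which is of order $-N$ in the $\zeta$-variable) together with a chosen phase $\Phi_N$; this is automatic once one passes to the correlation kernel and rescales.

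\textbf{Complex case (i).} After gauge transformation, set
\begin{equation*}
\widetilde K_N(z,w) := (N\delta)^{-1} e^{i\Phi_N(z)-i\Phi_N(w)}\sqrt{\omega_N^{\rm c}(\zeta)\omega_N^{\rm c}(\eta)}\,\mathcal{K}_N(\zeta,\bar\eta),\qquad \zeta = p+\tfrac{z}{\sqrt{N\delta}},\ \eta = p+\tfrac{w}{\sqrt{N\delta}}.
\end{equation*}
A direct chain-rule computation translates Theorem~\ref{Thm_ODE}(i) into
\begin{equation*}
\Bigl[\tfrac12\partial_z^2 + \alpha(z,\bar w)\,\partial_z + \beta(z,\bar w)\Bigr]\widetilde K_N(z,w) = \widetilde{\mathcal{R}}_N(z,w) + o(1),
\end{equation*}
where $\alpha,\beta$ are affine in $z-\bar w$ (the zeroth-order term comes from $(\tau^2\zeta-\bar\eta)/(1-\tau^2) \to \sqrt p\,(z-\bar w)/c + \nu+1$, with divergent parts cancelled by the gauge derivatives). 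To evaluate the inhomogeneity $\widetilde{\mathcal R}_N$, I would use Szeg\H{o}'s (Plancherel--Rotach) asymptotics
\begin{equation*}
L_N^{(\nu)}(x) = \tfrac{e^{x/2}}{\sqrt{\pi}}\,N^{\nu/2-1/4}\,x^{-\nu/2-1/4}\cos\!\bigl(2\sqrt{Nx}-\tfrac{\nu\pi}{2}-\tfrac{\pi}{4}\bigr) + O\bigl(N^{\nu/2-3/4}e^{x/2}\bigr),
\end{equation*}
at $x=\zeta/\tau,\,\bar\eta/\tau$. Combining the prefactor $\frac{N!}{\Gamma(N+\nu)}\frac{\tau^{2N-1}}{1-\tau^2}\sim\frac{N^{2-\nu}e^{-2c^2}}{2c^2}$, the weight factors $\sqrt{\omega_N^{\rm c}(\zeta)\omega_N^{\rm c}(\eta)}$, and the subtraction $L_{N-1}L_N-\tau^2 L_NL_{N-1}$ in Theorem~\ref{Thm_ODE}(i), the rapid $e^{\pm 2i\sqrt{Np/\tau}}$-oscillations are absorbed into $e^{i\Phi_N}$, while the surviving microscopic phases produce the explicit limit
\begin{equation*}
\widetilde{\mathcal R}_N(z,w) \;\longrightarrow\; (\text{const})\cdot\bigl[e^{\frac12(z-\bar w-2ia)^2}-e^{\frac12(z-\bar w+2ia)^2}\bigr]
\end{equation*}
with $a = \pi c\sqrt p\,\sigma_{\rm MP}(p) = \frac{c}{2}\sqrt{4-p}$. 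This is exactly the $t$-boundary term of $\frac{1}{\sqrt{2\pi}}\int_{-2a}^{2a} e^{\frac12(z-\bar w-it)^2}dt$; one then checks by direct differentiation that the candidate \eqref{GinUE wH bulk kernel} solves the limit ODE with this inhomogeneity, and uniqueness in the relevant class (Hermitian symmetry, reproducing property, locally uniform bounds inherited from the reproducing property of $\mathcal{K}_N$) identifies the limit.

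\textbf{Symplectic case (ii).} For part (ii), I would apply Theorem~\ref{Thm_ODE}(ii) with the same rescaling and gauge. The left-hand side produces a limit operator of the same type, with first-/zeroth-order coefficients inherited from the $-z$ factor in \eqref{ODE for kappaN}. On the right-hand side, the term $\mathcal{K}_{2N}^{(2\nu)}(z,w)$ is a complex kernel at parameters $(2N,2\nu)$; part (i) applied to these parameters (the same $c$, with the microscale coherent) gives its limit, which serves as the inhomogeneity driving $\kappa^c_{\rm b}$. The second, rank-one, term $L_{2N}^{(2\nu)}(z/\tau)\sum_j(\cdots)L_{2j}^{(2\nu)}(\bar w/\tau)$ can be shown to be negligible after rescaling via a Christoffel--Darboux-type rearrangement and the same Szeg\H{o} asymptotics, mirroring the cancellation exploited at strong non-Hermiticity in \cite{AEP22,BE23}. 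Solving the resulting inhomogeneous ODE and verifying that \eqref{GinSE wH bulk kernel} is its unique admissible solution (via antisymmetry $\kappa^c_{\rm b}(z,w)=-\kappa^c_{\rm b}(w,z)$ and decay at infinity) completes the proof.

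\textbf{Main obstacle.} The delicate step is the asymptotic analysis of the inhomogeneous term. One must control the Szeg\H{o} phase $2\sqrt{Nx}$ to $o(1)$ accuracy over a neighbourhood of $p/\tau$ of size $O(1/N)$ in $x$ — keeping the next order in the expansion — so that the combined contribution from arguments at $\zeta$ and $\bar\eta$ produces the finite $a$-dependent shift, and one must track the non-trivial cancellations between the $L_N\cdot L_{N-1}$ and $L_{N-1}\cdot L_N$ products in Theorem~\ref{Thm_ODE}(i) so that the rapid $2\sqrt{Np/\tau}$-oscillations become pure gauge while only the microscopic shifts (related to $2a$) survive. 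This is precisely where the parameter $a = \pi c\sqrt p\,\sigma_{\rm MP}(p)$ emerges and matches the endpoint data of $E_a$ in \eqref{Bulk Weak ap}. A secondary (routine) obstacle is establishing locally uniform tightness of the pre-limit kernels, which follows from $L^2$ bounds via the reproducing property of $\mathcal K_N$.
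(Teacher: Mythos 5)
Your overall strategy is indeed the same as the paper's: use the second-order ODE of Theorem~\ref{Thm_ODE} after rescaling and gauge-transforming, and evaluate the inhomogeneity via Plancherel--Rotach asymptotics. The formula you obtain for the limiting inhomogeneity, $(\text{const})\cdot[e^{\frac12(z-\bar w-2ia)^2}-e^{\frac12(z-\bar w+2ia)^2}]$, is correct and matches the paper's $\sqrt{2/\pi}\,e^{-\frac{c^2(4-p)}{2}+\frac12(\zeta-\eta)^2}\sin(c\sqrt{4-p}(\zeta-\eta))$. However, there are three substantive gaps in the argument.

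\textbf{Degeneration to a first-order ODE.} You claim that after rescaling, the $\partial_z^2$ coefficient tends to the finite constant $1/2$ and that the limit equation is second-order. This is true for the naively rescaled but un-normalized ODE; however, after the gauge transformation \eqref{BTSM}--\eqref{RescaleKernelS} and after dividing so that the $\partial_\zeta$ coefficient is normalized to $1+O(\cdot)$, the coefficient of $\partial_\zeta^2$ in \eqref{RescaledODES} is $\frac{1-\tau^2}{2N}\sqrt{z/w}\cdot\sqrt{N\delta}$, which in the weak regime $1-\tau^2\sim 2c^2/N$, $\sqrt{N\delta}\sim N/(2c\sqrt{p})$ is of order $O(1/N)$ and vanishes. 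The correct limit equation is therefore \emph{first-order}: $\partial_\zeta\widehat S = \widehat{\mathrm{I}} + o(1)$. Keeping a $\frac12\partial_\zeta^2$ term is inconsistent with keeping the $\partial_\zeta$ coefficient $O(1)$: one finds those two terms have orders differing by a factor of $O(N)$.

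\textbf{Identification of the limit.} A first-order ODE in $\zeta$ determines the pre-kernel only up to a $\zeta$-independent integration constant, and the natural boundary condition $\widehat S\to 0$ at $\infty$ that worked at strong non-Hermiticity fails here because $e^{u^2/2}$ is not integrable on $\R$. You propose to close the argument via ``uniqueness in the relevant class (Hermitian symmetry, reproducing property, locally uniform bounds).'' This is not precise enough to pin down the missing data; a priori, the class of reproducing kernels compatible with the first-order equation is not obviously a singleton. The paper instead observes that the limit of $\widehat{\mathrm{I}}$ depends only on $\zeta-\eta$, hence the limit kernel is translation-invariant in the horizontal direction, and invokes the classification of weakly non-Hermitian translation-invariant bulk limits from \cite[Theorem 3.8]{AB23}. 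For part (ii), the antisymmetry $\widehat\kappa_N(\zeta,\zeta)=0$ provides an explicit initial condition for the first-order ODE in $\zeta$, and uniqueness is automatic.

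\textbf{Controlling the rank-one inhomogeneity (part (ii)).} You state that the extra term in \eqref{ODE for kappaN}, involving $L_{2N}^{(2\nu)}(z/\tau)\,g_N(w)$, ``can be shown to be negligible via a Christoffel--Darboux-type rearrangement.'' That cannot work as stated: $g_N(w)$ is a sum over half the indices and has no closed Christoffel--Darboux collapse, and the term is not uniformly small in the microscopic variables — it oscillates rapidly in $N$ via the Plancherel--Rotach phase. The paper's Lemma~\ref{Lem_BE2w} handles this by a genuinely different argument: it derives a separate first-order ODE for the auxiliary quantity $\mathcal E_N(s) := N^\nu s^{\nu+1/2} e^{-\frac{2N}{1+\tau}s} g_N(s)$ from Proposition~\ref{Prop_RemainderODE}, expresses $\mathcal E_N(p)$ as an oscillatory integral $\int_0^p \mathfrak a(t) e^{iN\widetilde\phi(t)}\,dt$ with a non-stationary phase $\widetilde\phi$ on $(0,p)\subset(0,4)$, and invokes the Riemann--Lebesgue lemma. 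Without this (or an equivalent) oscillatory-integral estimate, the rank-one term does not visibly vanish in the limit, and the symplectic limit kernel cannot be identified.
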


The scaling limit \eqref{GinUE wH bulk kernel} was first introduced in the serial work \cite{FKS97, FKS97a, FKS98} studying the elliptic GinUE at weak non-Hermiticity. 
This has been extended in \cite{ACV18} for the fixed trace elliptic GinUE with perturbation.
Note that under the assumption that $\nu$ is an integer, Theorem~\ref{Thm_BulkWeakNonHermiticity} (i) was obtained in \cite[Theorem 1.7]{AB23} using the theory of Ward's identity \cite{AKM19,AKMW20}. 
Let us mention that, unlike the universality of random normal matrices at strong non-Hermiticity \cite{AHM11,HW21}, the universality at weak non-Hermiticity has been less developed. 
Nonetheless, this universality class appears in the almost-circular ensemble \cite{BS23} and near the singular boundary point \cite{AKMW20}. 
See also a recent work \cite{Os23} for a non-integrable model lying in this universality class.

On the other hand, the scaling limit \eqref{GinSE wH bulk kernel} was obtained in the work of Kanzieper \cite{Kan02} studying the weakly non-Hermitian elliptic GinSE at the origin. For the elliptic GinSE, this was extended to the whole real axis in \cite{BES23}, where a certain Wronskian structure was also introduced. However, except for the elliptic GinSE, the existence of the universality class with the kernel \eqref{GinSE wH bulk kernel} has not been known to our knowledge. Therefore, by Theorem~\ref{Thm_BulkWeakNonHermiticity} (ii), we contribute to finding a new example.


\medskip 

Our final results address the edge scaling limits at weak non-Hermiticity. 
For this, recall that the Airy function is defined
\begin{equation}
\Ai(x):= \frac{1}{\pi} \int_0^\infty \cos\Big( \frac{t^3}{3}+xt \Big)\,dt,
\end{equation}
see e.g. \cite[Chapter 9]{NIST}. 
Note that as in the Hermitian random matrix theory, one needs to use different scaling for the edge case. 

\begin{thm}[\textbf{Edge scaling limits of planar Wishart ensembles at weak non-Hermiticity}] 
\label{Thm_EdgeWeakNonHermiticity}
 Let 
\begin{equation}
\label{EdgeWeakTau1}
\tau =1-\frac{c^2}{(2N)^{1/3}}, \qquad  c \in (0,\infty).
\end{equation}
and $p=(1+\tau)^2$. Then we have the following.
\begin{itemize}
     \item[(i)] \textbf{\textup{(Complex ensemble)}} 
For any $\nu>-1$, we have  
\begin{equation}
\lim_{N \to \infty} R_{N,k}^{ \rm c }(z_1,\dots,z_k) = \det \Big[ K^{c}_{\rm{e}}(z_j,z_l) \Big]_{j,l=1}^k, 
\end{equation}
uniformly on compact subsets of $\C$, where 
\begin{equation}  \label{GinUE wH edge kernel}
\begin{split}
K^{c}_{\rm{e}}(z,w) &=  2c^2\sqrt{2\pi}e^{-(\im z)^2-(\im w)^2}
\\
&\quad \times \int_{-\infty}^{0}e^{\frac{\sqrt{2}}{2}c^3(z+\overline{w}-2u)+\frac{c^6}{6}}
\Ai\Bigl(\sqrt{2}c(z-u)+\frac{c^4}{4} \Bigr)
\Ai\Bigl(\sqrt{2}c(\overline{w}-u)+\frac{c^4}{4} \Bigr)du.
\end{split}
\end{equation} 


\smallskip 
 \item[(ii)] \textbf{\textup{(Symplectic ensemble)}} 
For any $\nu>-1/2$, we have 
\begin{equation}
\lim_{N \to \infty} R_{N,k}^{ \rm s }(z_1,\dots,z_k) = \prod_{j=1}^k (\overline{z}_j-z_j) \, \Pf \bigg[ e^{-|z_j|^2-|z_l|^2} 
\begin{pmatrix} 
\kappa^{ \widetilde{c} }_{ \rm e }(z_j,z_l) & \kappa^{ \widetilde{c} }_{ \rm e }(z_j,\overline{z}_l)
\smallskip 
\\
\kappa^{ \widetilde{c} }_{ \rm e }(\overline{z}_j,z_l) & \kappa^{ \widetilde{c} }_{ \rm e }(\overline{z}_j,\overline{z}_l) 
\end{pmatrix} 
\bigg]_{j,l=1}^k, 
\end{equation}  
uniformly on compact subsets of $\C$, where $ \widetilde{c} = 2^{1/6} c$ and 
\begin{equation}    \label{GinSE wH edge kernel}
\kappa^c_{ \rm e }(z,w):=	 
\sqrt{\pi} e^{z^2+w^2} \int_{-\infty}^{0} W(f_{w,c},f_{z,c})(u) \, du,
\end{equation} 
and
\begin{equation}
f_{z,c}(u):=2c\int_0^u e^{c^3(z-t)+\frac{c^6}{12}}
\Ai\Bigl(2c(z-t)+\frac{c^4}{4}\Bigr)\, dt.
\end{equation}
\end{itemize}

\end{thm}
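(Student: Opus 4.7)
My plan is to push the finite-$N$ differential equations of Theorem~\ref{Thm_ODE} through the edge scaling \eqref{EdgeWeakTau1}. Write $p=(1+\tau)^2$ with outward normal $\mathbf{n}(p)=1$, and substitute $z = p + \zeta/\sqrt{N\delta}$, $w = p + \eta/\sqrt{N\delta}$, where $\delta \sim (2N)^{1/3}/(16 c^2)$, into the scaled kernel obtained by dressing $\mathcal{K}_N^{(\nu)}$ (resp.~$\kappa_N^{(\nu)}$) with the symmetric weight cocycle $\sqrt{\omega_N^{\rm c}(z)\omega_N^{\rm c}(w)}/(N\delta)$. Expanding the coefficients of each ODE in powers of $N^{-1/3}$ and in $\zeta,\eta$, the left-hand side collapses to a limiting second-order linear operator, while the Laguerre products on the right-hand side converge to products of Airy functions once the Gaussian cocycle is absorbed. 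Uniqueness for the limiting ODE together with an a priori exponential decay estimate for the kernel outside the spectral edge then identifies the limit with \eqref{GinUE wH edge kernel} and \eqref{GinSE wH edge kernel}.

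\textbf{Complex case (i).}
The key analytic input is the Plancherel--Rotach Airy asymptotic for the generalised Laguerre polynomial at the soft edge $x=4N$: under the substitution $z = p + \zeta/\sqrt{N\delta}$,
\begin{equation*}
L_N^{(\nu)}\bigl(z/\tau\bigr) = c_{N,\nu}\, e^{\alpha_N(z)}\, \Ai\!\Bigl(\sqrt{2}\,c\,\zeta + \tfrac{c^4}{4}\Bigr)\bigl(1+O(N^{-1/3})\bigr),
\end{equation*}
uniformly for $\zeta$ in compact subsets of $\C$, where $c_{N,\nu}$ is an explicit prefactor and the exponential $e^{\alpha_N(z)}$ is exactly cancelled by the weight cocycle $\sqrt{\omega_N^{\rm c}(z)}$. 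Using $1-\tau^2 \sim 2c^2(2N)^{-1/3}$, the coefficients of the ODE in Theorem~\ref{Thm_ODE}(i) expand to a leading-order second-order operator $\mathcal{L}$ in $\zeta$, while the right-hand side converges to a product of two Airy functions times an explicit constant. A direct check (differentiating under the integral sign and using the Airy equation $\Ai''(x)=x\Ai(x)$) that the integrand of \eqref{GinUE wH edge kernel} solves $\mathcal{L}K^c_{\rm e}=\textup{RHS}$ completes the identification of the scaling limit.

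\textbf{Symplectic case (ii).}
The right-hand side of \eqref{ODE for kappaN} has two inhomogeneous pieces. The first, $\mathcal{K}_{2N}^{(2\nu)}$, is treated by rerunning part (i) with $N\leftrightarrow 2N$ and $\nu\leftrightarrow 2\nu$; the identity $1-c^2/(2N)^{1/3}=1-\widetilde{c}^{\,2}/(4N)^{1/3}$ with $\widetilde{c}=2^{1/6}c$ is exactly what produces the $\widetilde{c}$ appearing in the statement. The second piece is $L_{2N}^{(2\nu)}(z/\tau)$ times a partial sum over $j=0,\ldots,N-1$ in $L_{2j}^{(2\nu)}(w/\tau)$: after the Airy substitution for the single polynomial, and a Riemann-sum approximation of the $j$-sum (using an integral representation of the Laguerre polynomial together with the $j$-dependent Airy asymptotic), this limit is, under the stated change of variables, exactly the antiderivative $f_{z,c}$ defined in the theorem. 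Solving the resulting limiting ODE and reassembling in the Wronskian form \eqref{kappa Wronskian} then yields \eqref{GinSE wH edge kernel}.

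\textbf{Main obstacle.}
I expect two principal difficulties. First, the Airy asymptotics for $L_N^{(\nu)}$ must be uniform in a complex neighbourhood of the soft edge and precise enough (to order $o(1)$ after two holomorphic derivatives are taken) to survive substitution into a second-order operator; I would use either a Riemann--Hilbert analysis or a steepest-descent evaluation of the Hankel/Rodrigues integral to obtain the required error control, including the subleading terms that determine the precise form of the argument $\sqrt{2}c\zeta + c^4/4$. Second, and more subtle in the symplectic case, justifying the Riemann-sum passage from the partial sum in $\kappa_N^{(\nu)}$ to the integral over $(-\infty,0]$ requires an a priori exponential decay bound on $\kappa_N^{(\nu)}$ outside the droplet, which I would obtain by combining Theorem~\ref{Thm_ODE} with classical Airy tail bounds. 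Everything else is careful bookkeeping of the Gaussian cocycle $e^{-(\im z)^2-(\im w)^2}$ and the explicit numerical prefactors in \eqref{GinUE wH edge kernel} and \eqref{GinSE wH edge kernel}.
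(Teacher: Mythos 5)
Your overall strategy — pushing the finite-$N$ differential equations of Theorem~\ref{Thm_ODE} through the edge scaling and feeding in Plancherel--Rotach/Airy asymptotics — is exactly what the paper does, and for part~(i) your sketch is essentially the paper's proof. One small inaccuracy: after rescaling, the $\partial_\zeta^2$ coefficient in the ODE carries a factor $(1-\tau^2)\sqrt{N\delta}/N\sim N^{-2/3}$ and therefore vanishes; the limiting equation for $\widehat{S}_N^{(\nu)}$ is \emph{first} order, not second order, which is precisely why one can simply integrate $\partial_\zeta \widehat{S}_N \to -c^2\sqrt{2\pi}\,e^{\cdots}\Ai(\cdots)\Ai(\cdots)$ from $\zeta = +\infty$. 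No differentiation-under-the-integral verification is needed.

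The real divergence is in part~(ii), in the treatment of the second inhomogeneous term $L_{2N}^{(2\nu)}(z/\tau)\cdot g_N(w)$ where $g_N$ is the partial sum over $j$. You propose a Riemann-sum approximation of the $j$-sum, using an integral representation of $L_{2j}^{(2\nu)}$. The paper does something structurally different: it shows (Proposition~\ref{Prop_RemainderODE}) that $g_N$ itself satisfies a second-order ODE of the same type, with inhomogeneous term $L_{2N-1}^{(2\nu)}$, so that the ODE machinery can be applied to $g_N$ exactly as to $\mathcal{K}_N$. In Lemma~\ref{Lem_SEE3w} the limit of $\widehat{E}_N$ is then pinned down by (a) its $\eta$-derivative, obtained from that ODE plus the Airy asymptotic of a single Laguerre polynomial, and (b) its initial value $\widehat{E}_N(\zeta,0)$, which is reduced to $\mathcal{E}_N(p)$ and computed by integrating $\mathcal{E}_N'$ from $+\infty$ back to $p$, splitting the integral at $r_N=p_N+(2N)^{-2/3+\alpha}$ and using the Airy regime near $p_N$ and the exponential-decay regime (Lemma~\ref{Lem_Strong1} together with $\Omega>0$) beyond. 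Your Riemann-sum route is historically grounded (cf.~Osborn's treatment of the origin regime), but it requires uniform asymptotics of $L_{2j}^{(2\nu)}(2Nw/\tau)$ across the whole range $0\le j\le N-1$, not just near $j=N$, and the Hankel/Rodrigues integral representation you invoke is not available for non-integer $\nu$ — whereas the paper emphasizes that its ODE approach covers all $\nu>-1/2$. The discovery that $g_N$ has a companion ODE is the key technical point your proposal is missing; the differential-equation approach converts a two-parameter uniformity problem into a one-dimensional initial-value problem and is what makes the weak-Hermiticity edge tractable at the level of rigor required.
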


\medskip 


For the elliptic GinUE, the edge scaling limit at weak non-Hermiticity was first obtained by Bender in \cite{Be10}, see \cite[Section 6.1]{AB23} for a simpler proof. (This is also closely related to the statistics of the rightmost eigenvalue \cite{CESX22,CESX23}.)
However, the form of the limiting kernel in \cite{Be10} was not in the shape of \eqref{GinUE wH edge kernel}, but rather in a certain double contour integral form. 
The limiting kernel of the form \eqref{GinUE wH edge kernel} was obtained by Akemann and Bender in \cite{AB10}, where they studied the non-Hermitian Wishart ensemble in the Dirac picture, i.e. the $(2N+\nu) \times (2N+\nu)$ random Dirac matrix
\begin{equation} \label{def of Dirac}
\mathcal{D}:=\begin{pmatrix} 
0 & X_1 \\
X_2^*  & 0 
\end{pmatrix}. 
\end{equation} 
The eigenvalues of the non-Hermitian Wishart matrix \eqref{def of nWishart} can be obtained by squaring the eigenvalues of the Dirac matrix \eqref{def of Dirac}. 
As a consequence, the origin scaling limits of \eqref{def of nWishart} and \eqref{def of Dirac} are indeed equivalent, see \cite{ABK21}.  
However, away from the origin, it requires separate analysis, and in particular the result in \cite{AB10} does not imply Theorem~\ref{Thm_EdgeWeakNonHermiticity} (i), albeit they are contained in the same universality class.

For the symplectic ensemble, the edge scaling limit \eqref{GinSE wH edge kernel} was obtained in \cite{AP14, BES23} for the elliptic GinSE. Similar to the bulk scaling limit, there is no known example other than the elliptic GinSE where we have the scaling limit \eqref{GinSE wH edge kernel}. Thus, we provide the first result showing the appearance of the universality class \eqref{GinSE wH edge kernel} other than the elliptic GinSE model. 

We mention that both the limiting processes with the kernels \eqref{GinUE wH bulk kernel} and \eqref{GinSE wH bulk kernel} interpolate sine point processes ($c\to 0$) with the bulk Ginibre point processes ($c\to \infty$).
Similarly, the limiting processes with the kernels \eqref{GinUE wH edge kernel} and \eqref{GinSE wH edge kernel} interpolate Airy point processes ($c\to 0$) with the boundary Ginibre point processes ($c\to \infty$). 

\medskip

\begin{rem}[Scaling limits of the elliptic Ginibre ensembles]
As previously mentioned, the non-Hermitian Wishart ensembles are chiral counterparts of the elliptic Ginibre ensembles, which find several applications such as in the equilibrium counting \cite{FK16,BFK21,BEDPMW13}.  
For the reader's convenience, in Table~\ref{Table_summary elliptic Gin} below, let us also provide a summary of the literature on scaling limits of the elliptic Ginibre ensembles we have mostly mentioned above. 
We also refer to \cite{BL22,BL22a} for bulk and edge spacing distributions of the elliptic GinUE and also \cite{AT24} for the equicontinuity of a general $\beta$-ensemble with Hele-Shaw type potentials.    
Note that, for the elliptic Ginibre ensembles, there is no need to distinguish the origin scaling limit since there are no singularities. 
Nonetheless, the origin scaling limit is technically easier than the other cases. 
For instance, \cite{AEP22,Kan02} first obtained the bulk scaling limits at the origin, which were later extended in \cite{BE23,BES23} along the entire real axis.  
Let us also mention \cite{ADM23,Mo23} for recent works on the higher dimensional analogue of the elliptic GinUE, see also \cite{HHJK23,OYZ23,Ch23a} and references therein for more recent work related to the elliptic ensembles.

\begin{table}[h!]
    \centering
   {\def\arraystretch{2}
\begin{tabular}{ |p{1.5cm}|p{1.5cm}|p{1.5cm}|  }
\hline
\multicolumn{3}{|c|}{\textbf{Elliptic GinUE} } 
\\
\hline
&  \centering \textup{Bulk} &  \hspace{0.35cm}\textup{Edge} 
\\ 
\hline 
\centering Strong &  \centering  \cite{Ri13} & \hspace{0.35cm} \cite{LR16}
 \\ 
 \hline
\centering Weak & \centering \cite{FKS97,ACV18} & \hspace{0.35cm} \cite{Be10} 
 \\ 
   \hline 
\end{tabular}
\qquad 
\begin{tabular}{ |p{1.5cm}|p{1.5cm}|p{1.5cm}|  }
\hline
\multicolumn{3}{|c|}{\textbf{Elliptic GinSE} } 
\\
\hline
&  \centering \textup{Bulk} &  \hspace{0.35cm}\textup{Edge} 
\\ 
\hline 
\centering Strong &  \centering  \cite{AEP22,BE23} & \hspace{0.35cm} \cite{BE23}
 \\ 
 \hline
\centering Weak & \centering \cite{Kan02,BES23} & \hspace{0.15cm} \cite{AP14,BES23} 
 \\ 
   \hline 
\end{tabular}
}
\caption{Summary of the previous results on scaling limits of complex/symplectic elliptic Ginibre ensembles.}
\label{Table_summary elliptic Gin}
\end{table}
\end{rem}

\begin{rem}[Real orthogonal ensemble] 
Beyond the complex and symplectic ensembles, there have also been developments of the non-Hermitian ensembles in the real orthogonal symmetry classes.
In this case, both the real and complex eigenvalues appear with non-trivial probabilities, see e.g. \cite{Ta22}. 
The fundamental model in this symmetry class is the real Ginibre ensemble (GinOE), and its integrable structures and scaling limits have been studied in \cite{AK07, FN07, BS09, So07}. 
See also \cite{BF23b, WCF23, TZ23} and references therein for more recent works on the GinOE. The elliptic GinOE has been extensively studied in \cite{AK22, By23b, Fo23, FN08, BMS23, FT21, CFW24}. Its chiral part, the asymmetric Wishart ensemble, has also been investigated in \cite{AKP10, APS10}. 
Indeed, our method of the differential equation (Theorem~\ref{Thm_ODE}) can also be applied to the real orthogonal ensemble, and we hope to revisit this topic in future work. 
\end{rem}

\subsection*{Organisation of the paper}
The rest of this paper is organised as follows. In the next section, we revisit integrable structures of the non-Hermitian Wishart ensemble and compile asymptotic behaviours of the Laguerre polynomials needed for our analysis.
In Section~\ref{Section_ODE}, we establish the differential equation (Theorem~\ref{Thm_ODE}) satisfied by the correlation kernels. 
Subsequent sections implement the asymptotic analysis facilitated by Theorem~\ref{Thm_ODE}. Section~\ref{Section_strong} is devoted to the scaling limits at strong non-Hermiticity (Theorem~\ref{Thm_StrongNonHermiticity}). 
Moving on to Section~\ref{Section_weak bulk}, we derive the scaling limits at weak non-Hermiticity (Theorems~\ref{Thm_BulkWeakNonHermiticity} and ~\ref{Thm_EdgeWeakNonHermiticity}).

\medskip 

\subsection*{Acknowledgements} 
We wish to express our gratitude to Markus Ebke for his helpful suggestion on Figure~\ref{Fig_Omega}. We also thank Gernot Akemann for his interest and helpful discussions.

\section{Preliminaries} \label{Section_preliminaries}

In this section, we compile integrable structures of the non-Hermitian Wishart ensembles and asymptotic behaviours of the generalised Laguerre polynomials.

\subsection{Planar (skew)-orthogonal Laguerre polynomials} \label{Subsec_Laguerre}

In this subsection, we recall the planar (skew)-orthogonal polynomial formalism for determinantal/Pfaffian structures of the non-Hermitian Wishart ensembles. 
Let
\begin{equation}
\label{LaguerrePolynomial}
p_n(z)=(-1)^nn!\Bigl(\frac{\tau}{N}\Bigr)^n L_{n}^{(\nu)}\Big(\frac{N}{\tau}z\Big)
\end{equation}
be the scaled monic Laguerre polynomial. 
It was shown in \cite{Os04,Ak05} that the family $\{p_n\}$ satisfies the planar orthogonality 
\begin{equation}
\int_{\C}p_n(z)\overline{p_m(z)}  \omega^{ \rm c }_N(z) \, dA(z)
=h_{n}\,\delta_{n,m}, \qquad h_{n} =\frac{1-\tau^2}{2}\frac{n! \, \Gamma(n+\nu+1)}{N^{\nu+2n+2}},
\end{equation}
where $\omega^{ \rm c }$ is the weight function in \eqref{weight functions}.
Note that in the maximally non-Hermitian limit $\tau \to 0$, the orthogonal polynomial $p_n$ becomes a monomial. This is consistent with the rotational symmetry of the potential \eqref{weight functions} for $\tau=0$. In this case, some more explicit computations become possible. For instance, the fluctuation of the spectral radius was investigated in \cite{CJQ20}. 

\medskip 

Next, let us recall the skew-orthogonal polynomial formalism due to Kanzieper \cite{Kan02}.
Recall that the skew-symmetric inner product is given by 
\begin{equation}
\langle h|g \rangle_{ \rm s }:= \int  ( \overline{z}-z )  \omega_N^{ \rm s }(z)  \Big( h(z) \overline{g(z)}-\overline{h(z)}g(z) \Big)\, dA(z). 
\end{equation}
Here, one can also consider a general weight function. 
By definition, the family of skew-orthogonal polynomials $\{q_k\}$ is characterized by   
\begin{equation}
\langle q_{2k+1} | q_{2l} \rangle_{ \rm s } =-\langle q_{2l} | q_{2k+1} \rangle_{ \rm s } =r_k \delta_{kl}  , \qquad   \langle q_{2k+1} | q_{2l+1} \rangle_{ \rm s }=\langle q_{2l} | q_{2k} \rangle_{ \rm s }= 0, 
\end{equation}
where $r_k$ is called the skew-norm. 
(Note that the skew-orthogonal polynomials are not uniquely determined by this condition.) 
The construction of skew-orthogonal polynomials for a given weight function remains open in general. Nonetheless, a certain construction has been addressed in a recent work \cite{AEP22} when the associated planar orthogonal polynomial satisfies the three-term recurrence relation. 
(See \cite{Kan02,Ak05} for earlier works on special cases.)
In our present case with the weight function \eqref{weight functions}, it follows from \cite[Example A.3]{AEP22} that the family 
\begin{align}
q_{2k+1}(z)=&-(2k+1)!\Bigl(\frac{\tau}{2N}\Bigr)^{2k+1}L_{2k+1}^{(2\nu)}\Bigl(\frac{2N}{\tau}z\Bigr)
\\
q_{2k}(z)=&\frac{2^{2k}k!\Gamma(k+\nu+1)}{(2N)^{2k}}
\sum_{j=0}^{k}\tau^{2j}\frac{(2j)!}{2^{2j}j!\Gamma(j+\nu+1)} L_{2j}^{(2\nu)}\Bigl(\frac{2N}{\tau}z\Bigr).
\end{align}
form skew-orthogonal polynomials associated with the weight $\omega^{ \rm s }_N(z)$ with the skew-norm 
\begin{equation}
r_k=\frac{(1-\tau^2)^2(2k+1)!\Gamma(2k+2\nu+2)}{(2N)^{4k+2\nu+4}}.
\end{equation}
Let us also mention that the partition functions $Z_N^{ \rm c}$ and $Z_N^{ \rm s }$ in \eqref{Gibbs complex} and \eqref{Gibbs symplectic} can be written in terms of the product of orthogonal and skew-orthogonal norms. 

Using these polynomials together with the general theory on determinantal/Pfaffian point process \cite{BF22,BF23}, we have the following integrable structure of the $k$-point correlation functions \eqref{def of RNk complex} and \eqref{def of RNk symplectic}.

\begin{prop}[\textbf{Determinantal/Pfaffian point processes}] \label{Prop_Det Pfa struc}
For any $\tau \in [0,1]$ and $N \in \mathbb{N}$, we have the following. 
\begin{itemize}
    \item[(i)] \textbf{\textup{(Complex ensemble)}} We have
    \begin{equation} \label{def of KN c}
\bfR_{N,k}^{ \rm c }(z_1,\dots,z_k) = \det \Big[ \bfK_N^{ \rm c }(z_j,z_l) \Big]_{j,l=1}^k, \qquad  \bfK_N^{ \rm c }(z,w):= \sqrt{ \omega^{ \rm c }_N(z) \omega^{ \rm c }_N(w) } S_N(z, \overline{w} ), 
\end{equation}
where 
\begin{equation} \label{def of SN}
S_{N}(z,w) :=\sum_{j=0}^{N-1}\frac{p_j(z) p_j(w) }{ h_j } 
=\frac{2N^{\nu+2}}{1-\tau^2}\sum_{j=0}^{N-1}\frac{j!\, \tau^{2j} }{\Gamma(j+\nu+1)}
L_{j}^{(\nu)}\Bigl(\frac{N}{\tau}z\Bigr) L_{j}^{(\nu)}\Bigl(\frac{N}{\tau}w\Bigr) . 
\end{equation}
\item[(ii)] \textbf{\textup{(Symplectic ensemble)}} We have
\begin{equation}  
\bfR_{N,k}^{ \rm s }(z_1,\cdots, z_k):= \prod_{j=1}^{k} (\overline{z}_j-z_j )  \Pf \Big[ \bfK_{N}^{ \rm s }(z_j,z_l)  \Big]_{ j,l=1 }^k, 
\end{equation} 
where  
\begin{equation}  \label{def of KN s}
\bfK_N^{ \rm s }(z,w) :=  \sqrt{ \omega^{ \rm s }_N(z) \omega^{ \rm s }_N(w) }
\begin{pmatrix} 
\bfkappa_N(z,w) & \bfkappa_N(z,\overline{w}) 
\smallskip 
\\
\bfkappa_N(\overline{z},w) & \bfkappa_N(\overline{z},\overline{w}) 
\end{pmatrix}, \qquad 
\bfkappa_N(z,w):=\bfG_N(z,w)-\bfG_N(w,z).
\end{equation}
Here,  
\begin{align}
\begin{split}
&\quad \bfG_N(z,w)  :=\sum_{k=0}^{N-1}\frac{q_{2k+1}(z)q_{2k}(w)}{r_k} 
\\
&= -\frac{\sqrt{\pi}(2N)^{2\nu+3}}{2^{2\nu+1}(1-\tau^2)^2}
\sum_{k=0}^{N-1}\sum_{j=0}^{k}\frac{(2k)!!\tau^{2k+1}}{2^k\Gamma(k+\nu+3/2)}
\frac{\tau^{2j}(2j-1)!!}{2^j\Gamma(j+\nu+1)}L_{2k+1}^{(2\nu)}\Bigl(\frac{2N}{\tau}z\Bigr)L_{2j}^{(2\nu)}\Bigl(\frac{2N}{\tau}w\Bigr).
\end{split}
\end{align} 
\end{itemize}
\end{prop}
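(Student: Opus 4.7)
Both parts are applications of the standard Andreief/de Bruijn framework for planar Gibbs ensembles, combined with the (skew)-orthogonality structures in Subsection~\ref{Subsec_Laguerre}. For the complex ensemble I would first rewrite the squared Vandermonde as $\prod_{j<k}|z_j-z_k|^2 = \det[z_j^{k-1}]_{j,k=1}^N \cdot \overline{\det[z_j^{k-1}]_{j,k=1}^N}$ and then, since $p_j$ is monic of degree $j$, perform a triangular change of basis from monomials to $\{p_j\}$ without altering the determinants. Setting $\psi_j(z):= p_j(z)\sqrt{\omega_N^{\rm c}(z)}$, the density \eqref{Gibbs complex} takes the biorthogonal form
\begin{equation*}
d\P_N^{\rm c}(\boldsymbol z) = \frac{1}{Z_N^{\rm c}}\, \det\bigl[\psi_{k-1}(z_j)\bigr]\, \overline{\det\bigl[\psi_{k-1}(z_j)\bigr]} \prod_{j=1}^N dA(z_j).
\end{equation*}
Andreief's identity combined with the orthogonality $\int_{\C}\psi_j \overline{\psi_k}\,dA = h_j\,\delta_{jk}$ then gives $Z_N^{\rm c}=N!\prod_{j=0}^{N-1} h_j$ and the determinantal reduction of $\bfR_{N,k}^{\rm c}$ with reproducing kernel $\sum_{j=0}^{N-1} h_j^{-1}\psi_j(z)\overline{\psi_j(w)}$. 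Since the $p_j$ have real coefficients, $\overline{p_j(w)}=p_j(\overline w)$, and this matches the stated $\sqrt{\omega_N^{\rm c}(z)\omega_N^{\rm c}(w)}\, S_N(z,\overline w)$; substituting \eqref{LaguerrePolynomial} recovers the explicit expression \eqref{def of SN}.

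\textbf{Part (ii): Pfaffian structure.} The key structural identity is
\begin{equation*}
\prod_{j=1}^N|z_j-\overline z_j|^2 \prod_{1\le j<k\le N}|z_j-z_k|^2|z_j-\overline z_k|^2 = \prod_{j=1}^N(\overline z_j - z_j)\cdot V(z_1,\overline z_1,\dots,z_N,\overline z_N),
\end{equation*}
where $V$ denotes the $2N$-point Vandermonde determinant (expand $V$ in terms of its defining product and match pair-by-pair). Grouping rows into conjugate pairs, $V$ equals the determinant of the $2N\times 2N$ matrix whose pair of rows indexed by $j$ is $(q_{k-1}(z_j))_k$ and $(q_{k-1}(\overline z_j))_k$, the change of basis from monomials to $\{q_k\}$ again being triangular with unit determinant. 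I would then apply the de Bruijn Pfaffian integration identity: integrating each row-pair against $(\overline z-z)\omega_N^{\rm s}(z)\,dA(z)$ converts it into a $2\times 2$ Gram block with entry $\langle q_{j-1}|q_{k-1}\rangle_{\rm s}$. Skew-orthogonality makes the resulting $2N\times 2N$ Gram matrix block-antidiagonal with blocks proportional to $r_k$, yielding $Z_N^{\rm s}=N!\prod_{k=0}^{N-1} r_k$ and, leaving $k$ particle pairs free, the Pfaffian representation of $\bfR_{N,k}^{\rm s}$. The associated Christoffel-Darboux-type summation collapses, thanks to skew-orthogonality, to the two-term expression $\bfG_N(z,w)=\sum_{k=0}^{N-1} r_k^{-1}\,q_{2k+1}(z)\,q_{2k}(w)$; antisymmetrization and the conjugate-pair structure then produce the $2\times 2$ matrix kernel \eqref{def of KN s}. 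Substituting the explicit $q_k$ and $r_k$ from Subsection~\ref{Subsec_Laguerre} gives the displayed formula for $\bfG_N$.

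\textbf{Main obstacle.} The computational heart lies in the symplectic case: one must carefully track the Pfaffian combinatorics so that only the diagonal pairings of consecutive skew-orthogonal polynomials contribute, producing the two-term pre-kernel, and one must verify the correct placement of $\sqrt{\omega_N^{\rm s}(z)\omega_N^{\rm s}(w)}$ and the self-interaction factor $\prod_j(\overline z_j-z_j)$ in $\bfK_N^{\rm s}$. The complex case, by comparison, is a routine application of Andreief's identity once the orthogonality is in place.
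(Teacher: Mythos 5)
The paper does not prove this proposition itself; it invokes the general determinantal/Pfaffian machinery from the reviews cited in the text and simply records the resulting kernels. Your proposal reconstructs precisely that standard argument: Andreief's identity with orthogonal polynomials for part~(i), and the de~Bruijn Pfaffian integration identity with skew-orthogonal polynomials for part~(ii). This is the right approach and fills in what the paper elides.

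One small caveat in part~(ii): your intermediate identity is off by $(-1)^N$. Writing $V:=\prod_{i<j}(w_j-w_i)$ with the ordering $(z_1,\overline z_1,\dots,z_N,\overline z_N)$, one gets $V=\prod_j(\overline z_j-z_j)\prod_{j<k}|z_j-z_k|^2|z_j-\overline z_k|^2$, and since $|z_j-\overline z_j|^2=(z_j-\overline z_j)(\overline z_j-z_j)$, the Gibbs weight in \eqref{Gibbs symplectic} satisfies
\begin{equation*}
\prod_{j=1}^N|z_j-\overline z_j|^2\prod_{j<k}|z_j-z_k|^2|z_j-\overline z_k|^2 \;=\; \prod_{j=1}^N(z_j-\overline z_j)\cdot V ,
\end{equation*}
not $\prod_j(\overline z_j-z_j)\cdot V$ as you wrote. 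This sign is harmless in the end (it gets absorbed when matching to the positive density and the row-pair convention in the Pfaffian), but it must be tracked consistently when you extract $Z_N^{\rm s}=N!\prod_k r_k$ and the prefactor $\prod_j(\overline z_j-z_j)$ in the final formula. Everything else — the monicity of $p_j$ and of the displayed $q_k$'s guaranteeing unit-determinant basis changes, the reality of the Laguerre coefficients giving $\overline{p_j(w)}=p_j(\overline w)$, and the block-antidiagonal Gram matrix forcing the two-term pre-kernel $\bfG_N$ — is correct.
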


For the proof of Theorem~\ref{Thm_ODE} in the following section, let us here recall some identities of the generalised Laguerre polynomials.
They satisfy the recurrence relations 
\begin{align}
& L_k^{(\nu)}(z)=L_{k}^{(\nu+1)}(z)-L_{k-1}^{(\nu+1)}(z),
\label{Laguerre1}
\\
& kL_{k}^{(\nu)}(z)=(k+\nu)L_{k-1}^{(\nu)}(z)-zL_{k-1}^{(\nu+1)}(z)
\label{Laguerre2}.
\end{align}
Furthermore it satisfies the differentiation rules:
\begin{align}
& \partial_zL_k^{(\nu)}(z)=-L_{k-1}^{(\nu+1)}(z),
\label{DLaguerre1}
\\
& \partial_z\bigl(z^{\nu}L_k^{(\nu)}(z)\bigr)=(k+\nu)z^{\nu-1}L_{k}^{(\nu-1)}(z).
\label{DLaguerre2}
\end{align}
Combining these, it also follows that 
\begin{equation}
 z\partial_z^2L_{k}^{(\nu)}(z)+(\nu+1-z)\partial_zL_k^{(\nu)}(z)+kL_k^{(\nu)}(z)=0.
\label{ODELaguerre}
\end{equation}

\subsection{Strong asymptotics of the generalised Laguerre polynomials}


To describe the asymptotic behaviour of planar orthogonal polynomials, it is convenient use the conformal mappings associated with the droplet \eqref{S droplet}.
Let  
\begin{equation}
\label{ConformalMap1}
\phi(z) :=\Bigl(z+\frac{\tau^2}{z}\Bigr)+2\tau
\end{equation}
be the (shifted) Joukowsky transform $\phi:\C\backslash {\rm clo}(\D) \to \C \backslash S$. 
The inverse map $\psi:\C \backslash S \to \C\backslash {\rm clo}(\D) $ is given by 
\begin{equation}
\label{ConformalMap2}
\psi(z):=\frac{z-2\tau}{2}\Bigl(1+\sqrt{1-\frac{4\tau^2}{(z-2\tau)^2}}\Bigr).
\end{equation}
The Schwarz function $\mathcal{S}:\C\backslash[0,4\tau]\to \C$ is given by 
\begin{equation}
\label{Schwarz1}
\mathcal{S}(z):=\overline{\phi(1/\overline{\psi(z)})}= \frac{(z-2\tau)}{2} \bigg( \tau^2+\frac{1}{\tau^2}+\Bigl(\tau^2-\frac{1}{\tau^2}\Bigr)\sqrt{1-\frac{4\tau^2}{(z-2\tau)^2}} \bigg)+2\tau. 
\end{equation}  
Furthermore, by \cite[Eq.(3.11)]{ABK21}, we have 
\begin{equation}
\label{Schwarz2}
\mathcal{S}(z)=\frac{z}{A^2}\bigl(2C(z)+B\bigr)^2,\quad z\in S^{\mathrm{c}}.
\end{equation}
where $A$ and $B$ are given by \eqref{def of A and B} and 
\begin{equation}
\label{Schwarz3}
C(z)=\frac{A}{4} \frac{1-\tau^2}{\tau} \frac{z-\sqrt{(z-2\tau)^2-4\tau^2}}{z} 
\end{equation}
is the Cauchy transform of the measure $\mu$ in \eqref{MeasMu}, see \cite[Eq.(3.29)]{ABK21}. 
Using the Schwarz function \eqref{Schwarz1}, the domain of \eqref{ConformalMap2} can be extended so that $\psi:\C\backslash[0,4\tau]\to\C\backslash{\rm clo}(B_{\tau}(0))$, where $B_{R}(w)=\{z\in\C:|z-w|<R\}$.  

\medskip 

We now compile the strong asymptotic behaviours of the generalised Laguerre polynomials, which can be found in much literature, see e.g. \cite{Van07}. 
In particular, for the planar orthogonal polynomial, the asymptotic behaviours depend on the regions separated by the limiting skeleton, see Figure~\ref{Fig_PR regimes}.  
In our present case, this skeleton is given by the line segment connecting the two foci of the ellipse \eqref{S droplet}.

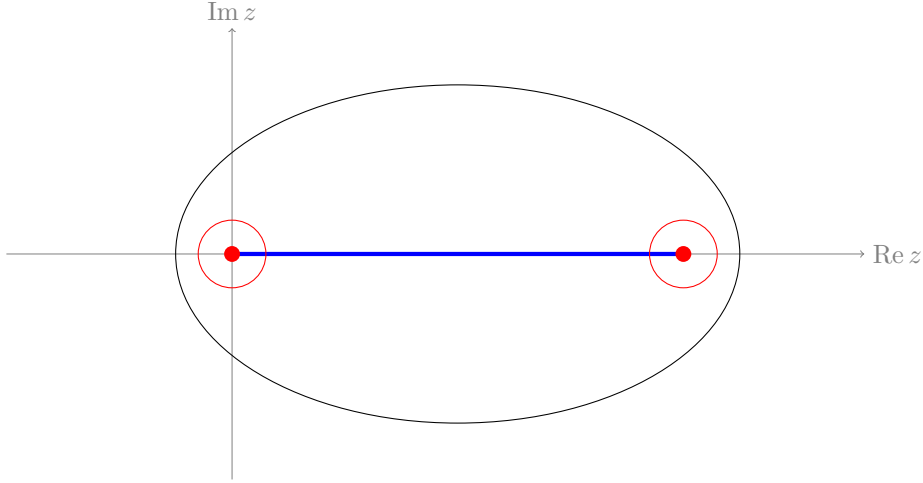
\begin{figure}[t]
    \centering
\begin{tikzpicture}
    \def\t{0.5} 
    \def\scalefactor{3} 
    \begin{scope}[scale=\scalefactor]
       \draw[->,ultra thin,gray] (-1,0)--(2.8,0) node[right]{$\re z$};
\draw[->,ultra thin,gray] (0,-1)--(0,1) node[above]{$\im z$};
    
        \draw[rotate=0] ({2*\t},0) ellipse ({1+\t^2} and {1-\t^2});
        

        
         \draw[ultra thick,blue] ({0},0) -- ({4*\t},0); 
         
         \draw[red] ({4*\t},0) circle (0.15); 
        \draw[red] ({0},0) circle (0.15); 
        \fill[red] ({4*\t}, 0) circle (1pt);
         \fill[red] ({0}, 0) circle (1pt);
    \end{scope}
\end{tikzpicture}
    \caption{In the figure, the closed curve represents the ellipse \eqref{S droplet}, and the red dots indicate its two foci, where $\tau=1/2$. 
    The figure illustrates three different regions to which different forms of the strong asymptotics of the planar Laguerre polynomial $p_n$ in \eqref{LaguerrePolynomial} apply: the critical regime near two foci $\{0,4\tau\}$ (inside the red circles); the oscillatory regime near the line segment connecting two foci (blue full line); and the exponential regime, which covers the rest of the complex plane.}
    \label{Fig_PR regimes}
\end{figure}

The below strong asymptotic of the generalised Laguerre polynomial can be found in \cite[Theorem 2.4 (a)]{Van07}, see also \cite[Appendix B]{LR16} for a similar asymptotic behaviour for the Hermite polynomials. 

\begin{lem}[\textbf{Exponential regime}]
\label{Lem_Strong1}
Fix $\tau\in[0,1)$, $\nu>-1$ and $r \in \mathbb{N}$. 
Then as $N\to\infty$, we have
\begin{equation}
\label{ExpRegimeLaguerre}
L_{N+r}^{(\nu)}\Bigl( \frac{N}{\tau}z \Bigr)=
\frac{1}{\sqrt{2\pi N}}
\frac{(-1)^{N+r}}{\tau^{N+r}}
\frac{\psi(z)^{r+\frac{\nu}{2}}\sqrt{\psi'(z)}}{z^{\nu/2}}e^{Ng_{\tau}(z)}\Bigl(1+O\Bigl(\frac{1}{N}\Bigr) \Bigr),
\end{equation}
uniformly over any compact subset of $\C\backslash[0,4\tau]$, where $g \equiv g_{\tau}$ is defined by 
\begin{equation}\label{GTZ} 
g_{\tau}(z):=\frac{2z}{z+\sqrt{(z-2\tau)^2-4\tau^2}}
+\log\Big( \frac{z-2\tau+\sqrt{(z-2\tau)^2-4\tau^2}}{2} \Big). 
\end{equation}
\end{lem}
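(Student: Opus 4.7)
My plan is to prove the lemma by applying the classical saddle-point method to a contour-integral representation of $L_{N+r}^{(\nu)}$. Starting from the generating-function identity
$$\sum_{n\ge 0} L_n^{(\nu)}(x)\, s^n = (1-s)^{-\nu-1}\,e^{-xs/(1-s)},$$
Cauchy's formula gives, after the substitution $x=Nz/\tau$,
$$L_{N+r}^{(\nu)}\!\Bigl(\frac{Nz}{\tau}\Bigr) = \frac{1}{2\pi i}\oint \frac{e^{N\Psi(s)}}{(1-s)^{\nu+1}\, s^{r+1}}\,ds, \qquad \Psi(s) := -\log s-\frac{zs}{\tau(1-s)},$$
with the contour a small positively oriented loop around $s=0$ inside $|s|<1$. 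In this setup the large parameter $N$ sits cleanly in the exponential and not in the amplitude, which is the standard starting point for saddle-point asymptotics.

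The first analytic step is to locate the saddles. Solving $\Psi'(s)=0$ produces the quadratic $\tau s^2+(z-2\tau)s+\tau=0$, whose roots multiply to one by Vieta; using the definition \eqref{ConformalMap2} of $\psi$ one identifies them as
$$s_+=-\frac{\tau}{\psi(z)}, \qquad s_-=-\frac{\psi(z)}{\tau},$$
so that $|s_+|<1<|s_-|$ throughout $\C\setminus [0,4\tau]$. I would then deform the original loop into a steepest-descent curve through $s_+$ that avoids the simple pole at $s=1$ and the branch cut of $\log s$. Uniformity on compact subsets of $\C\setminus[0,4\tau]$ follows once one verifies that $\re(\Psi(s)-\Psi(s_+))$ attains a strict maximum at $s_+$ along this curve, with a uniform gap at the remaining points.

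The standard Laplace expansion at a simple saddle then gives
$$L_{N+r}^{(\nu)}\!\Bigl(\frac{Nz}{\tau}\Bigr) = \frac{1}{(1-s_+)^{\nu+1}\,s_+^{r+1}}\,\frac{e^{N\Psi(s_+)}}{\sqrt{2\pi N\,\Psi''(s_+)}}\Bigl(1+O\Bigl(\frac{1}{N}\Bigr)\Bigr),$$
with the square root fixed by the direction of the steepest-descent contour and the $O(1/N)$ error arising because the odd-order correction vanishes by the symmetry of the descent path. The final step is purely algebraic: using the two identities $z=(\psi+\tau)^2/\psi$ and $\psi'(z)=\psi^2/(\psi^2-\tau^2)$ one checks
$$\Psi(s_+) = g_\tau(z)-\log\tau - i\pi, \qquad \Psi''(s_+) = \frac{\psi^2(\psi^2-\tau^2)}{\tau^2(\psi+\tau)^2},$$
together with $1-s_+=(\psi+\tau)/\psi$ and $s_+^{r+1}=(-\tau/\psi)^{r+1}$; substituting these turns the amplitude into $\psi^{r+\nu/2}\sqrt{\psi'(z)}/z^{\nu/2}$ and the leading exponential and sign into $(-1)^{N+r}\tau^{-(N+r)}e^{Ng_\tau(z)}$, matching the claimed formula.

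The main obstacle is the bookkeeping of branches: the fractional powers $\psi^{r+\nu/2}$ and $z^{-\nu/2}$, the square root $\sqrt{\Psi''(s_+)}$, and the phase coming from the orientation of the steepest-descent contour each require consistent choices, and only their product is single-valued on $\C\setminus[0,4\tau]$. For a fully rigorous version with the sharp error $O(1/N)$ uniformly in $z$, an attractive alternative is the Deift--Kriecherbauer--McLaughlin--Venakides--Zhou Riemann-Hilbert analysis of the varying Laguerre weight carried out by Vanlessen \cite{Van07}, where the outside parametrix built from the Szeg\H{o} function of the Marchenko-Pastur equilibrium measure automatically encodes all of these branches and produces the stated expansion globally.
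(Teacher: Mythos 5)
Your proposal is correct in outline but takes a genuinely different route from the paper. The paper does not spell out a proof of this lemma: it cites Vanlessen \cite[Theorem 2.4(a)]{Van07}, a Riemann--Hilbert steepest-descent result for Laguerre-type orthogonal polynomials with a varying weight, and also points to \cite[Appendix B]{LR16}, whose method (used in a commented-out alternative argument in the source) is a Liouville--Green/WKB ansatz on the second-order ODE \eqref{ODELaguerre} that the Laguerre polynomial satisfies, with the series coefficients determined recursively and normalized by the behaviour at infinity. You instead apply the classical saddle-point method to the Cauchy contour-integral form coming from the generating function. The structural steps all check: the phase $\Psi(s)=-\log s-zs/(\tau(1-s))$ has saddles at the roots of $\tau s^2+(z-2\tau)s+\tau=0$, which multiply to one and are $s_{\pm}=-\tau/\psi(z)$, $-\psi(z)/\tau$; the extension $\psi:\C\setminus[0,4\tau]\to\C\setminus\overline{B_\tau(0)}$ gives $|s_+|<1<|s_-|$; the identity $z+\sqrt{(z-2\tau)^2-4\tau^2}=2(\psi+\tau)$ yields $g_\tau(z)=z/(\psi+\tau)+\log\psi$, and with $z\psi=(\psi+\tau)^2$, $\psi'=\psi^2/(\psi^2-\tau^2)$ the amplitude collapses to $\psi^{r+\nu/2}\sqrt{\psi'}/z^{\nu/2}$ as claimed. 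What each approach buys: yours is conceptually more elementary and self-contained, whereas the RH (Vanlessen) route the paper relies on is heavier machinery but handles all the branch cuts and the uniformity statement in one stroke through the outer Szeg\H{o}-type parametrix. The cost of your route, which you correctly flag but do not resolve, is precisely the branch and phase bookkeeping: the determinations of $\log s_+$, of $\sqrt{\Psi''(s_+)}$ matched to the steepest-descent direction, of the $\tfrac{1}{2\pi i}$ prefactor, and of the fractional powers $\psi^{r+\nu/2}$, $z^{-\nu/2}$, $(\psi+\tau)^\nu$ must all be fixed consistently so that the product reproduces the overall sign $(-1)^{N+r}$; as written, a naive choice of branches does not give this sign, so a careful analytic-continuation argument (or a reduction to $z>4\tau$ real, where everything is unambiguous, followed by continuation) is needed to close the gap. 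With that done, the argument would indeed furnish a valid and more elementary proof of the lemma.
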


We mention that the $g$-function \eqref{GTZ} is indeed a general concept typically encountered within the framework of Riemann-Hilbert analysis. 
This function represents the (complexified) logarithmic energy of the limiting empirical measure of zeros of orthogonal polynomials, which in our present case, is a scaled Marchenko-Pastur law supported on $[0,4\tau]$.

By \cite[Lemma B.1]{AB23} and \cite[Theorem 2.4]{Van07}, we have the following asymptotic behaviour.

\begin{lem}[\textbf{Oscillatory regime}]
\label{StrongBulkFixNu}
For a fixed small $\delta>0$, we define 
\begin{equation*}
B_{\delta}:=\{z\in\C: \delta<\re z<1-\delta,-\delta<\im z<\delta\}.
\end{equation*}
Fix $\tau\in[0,1)$ and $\nu>-1$. Then, for $z\in B_{\delta}$, we have
\begin{align}
\begin{split}
\label{BulkStrongLaguerre}
L_N^{(\nu)}(4Nz)
& = (-1)^N(4Nz)^{-\nu/2}e^{2Nz}\bigl( 2\pi\sqrt{z(1-z)} \bigr)^{-1/2} N^{-1/2} \sqrt{\frac{(N+\nu)!}{N!}}
\\
&\quad \times
\bigg(
\cos\Bigl( 2N\sqrt{z(1-z)}-(2N+\nu+1)\arccos\sqrt{z}+\frac{\pi}{4} \Bigr)\bigl( 1+O(N^{-1}) \bigr)
\\
& \qquad +
\cos\Bigl( 2N\sqrt{z(1-z)}-(2N+\nu-1)\arccos\sqrt{z}+\frac{\pi}{4} \Bigr)O(N^{-1})
\bigg),
\end{split}
\end{align}
as $N\to\infty$.
\end{lem}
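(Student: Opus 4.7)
The plan is to deduce this lemma directly from the Riemann-Hilbert (RH) based strong asymptotics of Vanlessen \cite{Van07} for the generalised Laguerre polynomials, together with a conversion of conventions. First, I would recast the question in Vanlessen's setup: after the scaling $z\mapsto 4Nz$, the polynomials $L_N^{(\nu)}(4N\cdot)$ are governed by an equilibrium problem whose equilibrium measure has density proportional to $\sqrt{(1-x)/x}$ supported on $[0,1]$. The region $(\delta,1-\delta)\subset(0,1)$ then lies in the interior of the support away from the branch points, which is precisely the oscillatory (``bulk'') regime where \cite[Theorem 2.4]{Van07} provides a full $1/N$ Plancherel-Rotach expansion.

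Second, I would read off the leading contributions from the RH output. The Deift-Zhou steepest descent gives an asymptotic built from the outer (global) parametrix together with Airy parametrices at the hard and soft endpoints $\{0,1\}$; after matching, the two contributions produce a superposition of cosines with phases $2N\sqrt{z(1-z)}-(2N+\nu+1)\arccos\sqrt z+\pi/4$ (leading) and $2N\sqrt{z(1-z)}-(2N+\nu-1)\arccos\sqrt z+\pi/4$ (first correction). The amplitude factor $(2\pi\sqrt{z(1-z)})^{-1/2}$ is the standard density factor of the equilibrium measure appearing in the Plancherel-Rotach normalisation, while the prefactor $(4Nz)^{-\nu/2}e^{2Nz}$ comes from the $g$-function and the weight. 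The Stirling-type factor $N^{-1/2}\sqrt{(N+\nu)!/N!}$ accounts for the passage between the monic normalisation used in the RH analysis and the standard normalisation of $L_N^{(\nu)}$.

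Third, I would extend from the real interval to the complex strip $B_\delta$. Since Vanlessen's asymptotic expansion holds uniformly on compact subsets of an open complex neighbourhood of $(\delta,1-\delta)$ that avoids $\{0,1\}$, and both sides of \eqref{BulkStrongLaguerre} are holomorphic in $z$ on $B_\delta$ after removing the branch factor $z^{-\nu/2}$, the formula extends by analytic continuation. This is precisely the step carried out in \cite[Lemma B.1]{AB23} for the closely analogous Hermite/elliptic setting; the present Laguerre case is obtained by repeating that argument verbatim with the Laguerre RH data.

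The main obstacle is purely bookkeeping: one must track the normalisations (monic versus standard $L_N^{(\nu)}$), the sign $(-1)^N$ coming from orientation of the contour/endpoint matching, and the rescaling $z\mapsto 4Nz$ relative to the conventions of \cite{Van07}. No new analytic input is required beyond the cited RH results, so the proof reduces to invoking \cite[Theorem 2.4]{Van07} together with the conversion already performed in \cite[Lemma B.1]{AB23}.
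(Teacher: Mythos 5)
Your proposal is correct and is essentially identical to what the paper does: the paper states this lemma without proof, simply citing \cite[Theorem 2.4]{Van07} (the Riemann--Hilbert Plancherel--Rotach expansion) together with \cite[Lemma B.1]{AB23} (which carries out precisely the complex-strip extension and normalisation bookkeeping you describe). Your added exposition of the RH ingredients --- the equilibrium measure on $[0,1]$, the outer parametrix together with the Airy/Bessel local parametrices producing the two cosine phases, the density factor, and the Stirling-type conversion from monic to standard normalisation --- is consistent with that route and fills in the bookkeeping the paper leaves implicit.
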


By \cite[Theorem 2.4]{Van07} or \cite[Theorem 8.22.8 (c)]{Szego39}, we have the following.
\begin{lem}[\textbf{Critical regime}]
\label{StrongEdgeFixNu}
For a fixed $r\in\mathbb{Z}$, we have that for $x=4N+2(2N)^{1/3}\xi$,
\begin{align}
e^{-\frac{x}{2}}L_{N+r}^{(\nu)}(x)
= (-1)^{N+r}2^{-\nu-1/3}N^{-1/3}
\bigg(\Ai(\xi)-\frac{2r+\nu+1}{(2N)^{1/3}}\Ai'(\xi)+O(N^{-2/3}) \bigg),
\end{align}
as $N\to\infty$, uniformly for $\xi$ in a compact subset of $\C$. 
\end{lem}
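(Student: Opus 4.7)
This is the Plancherel--Rotach asymptotic of $L_{N+r}^{(\nu)}$ at the soft edge of its zeros (the scaled Marchenko--Pastur support $[0,4]$ ends at $x=4N$ for our scaling), and the natural route is steepest descent on a contour integral representation. I would start from the Cauchy-integral form of the Laguerre generating function,
\begin{equation*}
L_{N+r}^{(\nu)}(x) = \frac{1}{2\pi i}\oint_{|t|=\rho} \frac{e^{-xt/(1-t)}}{(1-t)^{\nu+1}\,t^{N+r+1}}\,dt, \qquad 0<\rho<1,
\end{equation*}
and, substituting $x=4N+2(2N)^{1/3}\xi$ and multiplying by $e^{-x/2}$, reduce this to
\begin{equation*}
e^{-x/2}L_{N+r}^{(\nu)}(x) = \frac{1}{2\pi i}\oint \frac{e^{-N\phi(t)-(2N)^{1/3}\xi\,\psi(t)}}{(1-t)^{\nu+1}\,t^{r+1}}\,dt,\quad \phi(t)=\log t+2\frac{1+t}{1-t},\quad \psi(t)=\frac{1+t}{1-t}.
\end{equation*}

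A short computation gives $\phi'(t)=(1+t)^2/[t(1-t)^2]$ and $\phi'''(-1)=-1/2$, so $t=-1$ is a \emph{double} saddle of $\phi$, with $\phi(-1)\equiv i\pi \pmod{2\pi i}$ (the source of the $(-1)^N$ factor), $\psi(-1)=0$, and $\psi'(-1)=1/2$. The coalescence of two simple saddles into one cubic saddle is the classical mechanism producing Airy asymptotics. I would deform the loop to the steepest descent contour through $t=-1$ (the only singularity on the way is $t=1$ and can be circumvented) and introduce the local coordinate $t=-1+2^{2/3}N^{-1/3}s$. Then one checks that $-N\phi(t)-(2N)^{1/3}\xi\,\psi(t)=-N\phi(-1)+s^3/3-\xi s+O(N^{-1/3})$, the smooth prefactor evaluates to $\pm 2^{-\nu-1}$ at $s=0$, and the Jacobian contributes $2^{2/3}N^{-1/3}$. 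Using $\Ai(\xi)=(2\pi i)^{-1}\int_C e^{s^3/3-\xi s}\,ds$ then produces the leading term $(-1)^{N+r}\,2^{-\nu-1/3}\,N^{-1/3}\Ai(\xi)$, with the overall sign fixed by the orientation of the steepest descent contour through $t=-1$.

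For the $\Ai'(\xi)$ correction I would retain one further order in the local expansions. This yields two contributions: (i) a linear-in-$s$ Taylor coefficient of the prefactor $(1-t)^{-\nu-1}t^{-r-1}$ about $t=-1$, and (ii) the quartic-in-$(t+1)$ correction in $\phi$ together with the quadratic-in-$(t+1)$ correction in $\psi$, which after Taylor-expanding contribute polynomial-in-$s$ factors of order $N^{-1/3}$. Each such factor is reduced to $\Ai'(\xi)$ via the identity $\int s\,e^{s^3/3-\xi s}\,ds=-\partial_\xi\int e^{s^3/3-\xi s}\,ds$ (and its $s^2$ analogue obtained from $\partial_s e^{s^3/3-\xi s}$). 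After collecting, the coefficient of $(2N)^{-1/3}\Ai'(\xi)$ is a rational function of $(r,\nu)$ that collapses to $-(2r+\nu+1)$; the uniform $O(N^{-2/3})$ remainder on compact subsets of $\mathbb{C}$ follows from standard Gaussian tail estimates along the steepest descent contour, which depend continuously on $\xi$. The main obstacle is precisely this bookkeeping, namely verifying that all $O(N^{-1/3})$ contributions combine into exactly $-(2r+\nu+1)\Ai'(\xi)$ rather than a linear combination of $\Ai$ at a shifted argument and $\Ai'$. For a rigorous shortcut one can instead invoke the Riemann--Hilbert steepest descent of Vanlessen \cite[Theorem~2.4(c)]{Van07}, which carries out this expansion to arbitrary order in $N^{-1/3}$, or Szeg\H{o}'s classical derivation \cite[Theorem~8.22.8(c)]{Szego39} based on matching the Laguerre equation \eqref{ODELaguerre} asymptotically to the Airy equation $u''(\xi)=\xi u(\xi)$.
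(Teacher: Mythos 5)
The paper does not prove this lemma itself; it simply quotes it from \cite[Theorem~2.4]{Van07} (Riemann--Hilbert steepest descent) and \cite[Theorem~8.22.8(c)]{Szego39} (matching the Laguerre ODE asymptotically to the Airy equation). Your sketch takes a third, more elementary route: steepest descent applied directly to the Cauchy integral coming from the Laguerre generating function. The computations you report for the leading term are correct --- $\phi'(t)=(1+t)^2/[t(1-t)^2]$ gives a degenerate saddle at $t=-1$ with $\phi'''(-1)=-1/2$, $\psi(-1)=0$, $\psi'(-1)=1/2$; the substitution $t=-1+2^{2/3}N^{-1/3}s$ normalises the exponent to $s^3/3-\xi s$; and tracking the prefactor $2^{-\nu-1}(-1)^{r+1}$ at $t=-1$, the factor $e^{-N\phi(-1)}=(-1)^N$, the Jacobian $2^{2/3}N^{-1/3}$, and the orientation reversal of the local contour does reproduce $(-1)^{N+r}\,2^{-\nu-1/3}N^{-1/3}\Ai(\xi)$. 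For the subleading $\Ai'$ correction you correctly identify where the $O(N^{-1/3})$ contributions come from (the linear Taylor term of $(1-t)^{-\nu-1}t^{-r-1}$, the quartic term of $\phi$, the quadratic term of $\psi$) and the reduction mechanism via $\int s\,e^{s^3/3-\xi s}\,ds$, but you do not carry out the bookkeeping that collapses the coefficient to exactly $-(2r+\nu+1)$; you are upfront about this and, like the paper, defer to \cite{Van07,Szego39} for a rigorous derivation. In effect your sketch buys transparency about the origin of the $(-1)^{N+r}$ and $2^{-\nu-1/3}N^{-1/3}$ factors, while the cited sources supply what a formal saddle-point sketch does not: a rigorous, uniform-on-compacts-of-$\C$ error bound, which is what the paper actually needs downstream.
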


\section{Differential equations for correlation kernels} \label{Section_ODE}

In this section, we prove Theorem~\ref{Thm_ODE}.

\subsection{Differential equation of the complex ensemble}

In this subsection, we prove Theorem \ref{Thm_ODE} (i).
\begin{proof}[Proof of Theorem \ref{Thm_ODE} (i)]
Let us make use of the transformation  
\begin{equation}
\widehat{\mathcal{K}}_N^{(\nu)}(z,w):= e^{ \frac{\tau}{1-\tau^2} (z+w) } \mathcal{K}_N^{(\nu)}(z,w),
\end{equation}
where $\mathcal{K}_N^{(\nu)}$ is given by \eqref{def of mathcalK}. 
Then it suffices to show that 
\begin{align}
\begin{split}
&\quad  \bigg[ (1-\tau^2)^2 z\,\partial_z^2
+ (1-\tau^2)^2 (\nu+1)\partial_z - w \bigg] \widehat{\cK}_N(z,w)
\\
&= e^{ \frac{\tau }{1-\tau^2}(z+w)} \frac{N! \, \tau^{2N-1}}{\Gamma(N+\nu+1)}
\bigg[ 
(N+\nu)
L_{N-1}^{(\nu)}\Bigl(\frac{z}{\tau}\Bigr)
L_{N}^{(\nu)}\Bigl(\frac{w}{\tau}\Bigr)
-
\tau^2(N+\nu) \, L_{N}^{(\nu)}\Bigl(\frac{z}{\tau}\Bigr)
L_{N-1}^{(\nu)}\Bigl(\frac{w}{\tau}\Bigr)
 \bigg].
\end{split}
\end{align}

By differentiating $\widehat{\cK}_{N}(z,w)$ with respect to the variable $z$ and using the differentiation rule \eqref{DLaguerre1}, we have 
\begin{align*}
&\quad (1-\tau^2)e^{- \frac{\tau}{1-\tau^2}(z+w)} \partial_{z}\widehat{\cK}_{N}(z,w)
\\
&= -(1-\tau^2)\sum_{ j=1 }^{N-1}\frac{j!\, \tau^{2j-1} }{\Gamma(j+\nu+1)} L_{j-1}^{(\nu+1)}\Bigl(\frac{z}{\tau}\Bigr)L_{j}^{(\nu)}\Bigl(\frac{w}{\tau}\Bigr)   + \sum_{j=0}^{N-1}\frac{j!\,\tau^{2j+1}}{\Gamma(j+\nu+1)} L_{j}^{(\nu)}\Bigl(\frac{z}{\tau}\Bigr)L_{j}^{(\nu)}\Bigl(\frac{w}{\tau}\Bigr). 
\end{align*}
Furthermore, it follows from the recurrence relation \eqref{Laguerre1} that
\begin{align*}
&\quad (1-\tau^2)e^{-\frac{\tau}{1-\tau^2}(z+w)}\partial_{z}\widehat{\cK}_{N}(z,w)- \frac{N!\,\tau^{2N-1}}{\Gamma(N+\nu+1)} L_{N-1}^{(\nu+1)}\Bigl(\frac{z}{\tau}\Bigr)L_{N}^{(\nu)}\Bigl(\frac{w}{\tau}\Bigr)
\\
& = -\sum_{j=0}^{N-1}\frac{(j+1)!\,\tau^{2j+1}}{\Gamma(j+\nu+2)}
L_{j}^{(\nu+1)}\Bigl(\frac{z}{\tau}\Bigr)L_{j+1}^{(\nu)}\Bigl(\frac{w}{\tau}\Bigr)  + \sum_{j=0}^{N-1}\frac{j!\,\tau^{2j+1}}{\Gamma(j+\nu+1)}
L_{j}^{(\nu+1)}\Bigl(\frac{z}{\tau}\Bigr)L_{j}^{(\nu)}\Bigl(\frac{w}{\tau}\Bigr). 
\end{align*}
Note here that by using \eqref{Laguerre2}, 
\begin{align*}
&\quad w \sum_{j=0}^{N-1}\frac{j!\,\tau^{2j}}{\Gamma(j+\nu+2)} L_{j}^{(\nu+1)}\Bigl(\frac{z}{\tau}\Bigr)
L_{j}^{(\nu+1)}\Bigl( \frac{w}{\tau}  \Bigr)
\\
&= -\sum_{j=0}^{N-1}\frac{(j+1)!\,\tau^{2j+1}}{\Gamma(j+\nu+2)}
L_{j}^{(\nu+1)}\Bigl(\frac{z}{\tau}\Bigr)L_{j+1}^{(\nu)}\Bigl(\frac{w}{\tau}\Bigr) + \sum_{j=0}^{N-1}\frac{j!\,\tau^{2j+1}}{\Gamma(j+\nu+1)} L_{j}^{(\nu+1)}\Bigl(\frac{z}{\tau}\Bigr)L_{j}^{(\nu)}\Bigl(\frac{w}{\tau}\Bigr). 
\end{align*}
On the other hand, by \eqref{DLaguerre2}, we have
\begin{align*}
&\quad   e^{-\frac{\tau}{1-\tau^2}(z+w)}  \partial_{z}\bigg[ \frac{N!\,\tau^{2N-1} }{\Gamma(N+\nu+1) } \Bigl(\frac{z}{\tau}\Bigr)^{\nu+1}
L_{N-1}^{(\nu+1)}\Bigl(\frac{z}{\tau}\Bigr)L_{N}^{(\nu)}\Bigl(\frac{w}{\tau}\Bigr) e^{\frac{\tau}{1-\tau^2}(z+w)}
\bigg]
\\
&= \Bigl(\frac{z}{\tau}\Bigr)^{\nu}
\frac{N!}{\Gamma(N+\nu+1)} \frac{\tau^{2N-2}}{1-\tau^2}
\bigg( -N\tau^2L_N^{(\nu)}\Bigl(\frac{z}{\tau}\Bigr)+(N+\nu)
L_{N-1}^{(\nu)}\Bigl(\frac{z}{\tau}\Bigr) \bigg)
L_N^{(\nu)}\Bigl(\frac{w}{\tau}\Bigr).
\end{align*}
Similarly, we obtain 
\begin{align*}
&\quad \frac{1-\tau^2}{w}
\Bigl(\frac{z}{\tau}\Bigr)^{-\nu} e^{-\frac{\tau}{1-\tau^2}(z+w)} \partial_{z}\bigg[
w\Bigl(\frac{z}{\tau}\Bigr)^{\nu+1} \sum_{j=0}^{N-1}\frac{j!\,\tau^{2j}}{\Gamma(j+\nu+2)}
L_{j}^{(\nu+1)}\Bigl(\frac{z}{\tau}\Bigr)
L_{j}^{(\nu+1)}\Bigl(\frac{w}{\tau}\Bigr) e^{\frac{\tau}{1-\tau^2}(z+w)}
\bigg]
\\
&= (1-\tau^2) \sum_{j=0}^{N-1}\frac{j!\,\tau^{2j-1}}{\Gamma(j+\nu+1)}
L_{j}^{(\nu)}\Bigl(\frac{z}{\tau}\Bigr) L_{j}^{(\nu+1)}\Bigl(\frac{w}{\tau}\Bigr)  + \frac{z}{\tau} \sum_{j=0}^{N-1}\frac{j!\,\tau^{2j+1}}{\Gamma(j+\nu+2)} L_{j}^{(\nu+1)}\Bigl(\frac{z}{\tau}\Bigr)
L_{j}^{(\nu+1)}\Bigl(\frac{w}{\tau}\Bigr)
\\
&=  \sum_{j=0}^{N-1}\frac{j!\, \tau^{2j-1}}{\Gamma(j+\nu+1)}
L_{j}^{(\nu)}\Bigl(\frac{z}{\tau}\Bigr)
L_{j}^{(\nu+1)}\Bigl(\frac{w}{\tau}\Bigr)
- \sum_{j=0}^{N-1}\frac{(j+1)!\,\tau^{2j+1}}{\Gamma(j+\nu+2)} 
L_{j+1}^{(\nu)}\Bigl(\frac{z}{\tau}\Bigr) L_{j}^{(\nu+1)}\Bigl(\frac{w}{\tau}\Bigr).
\end{align*}
Combining all of the above, we obtain 
\begin{align*}
&\quad e^{-\frac{\tau}{1-\tau^2}(z+w)} \partial_{z}\Bigl[w \Bigl( \frac{z}{\tau}\Bigr)^{\nu+1} 
 \sum_{j=0}^{N-1}\frac{j!\,\tau^{2j}}{\Gamma(j+\nu+2)}
L_{j}^{(\nu+1)}\Bigl(\frac{z}{\tau}\Bigr)
L_{j}^{(\nu+1)}\Bigl(\frac{w}{\tau}\Bigr)
e^{\frac{\tau}{1-\tau^2}(z+w)}
\Bigr]
\\
&= \frac{w}{\tau(1-\tau^2)}
\Bigl(\frac{z}{\tau}\Bigr)^{\nu}
\bigg[ 
\sum_{j=0}^{N-1}\frac{j!\,\tau^{2j}}{\Gamma(j+\nu+1)}
L_{j}^{(\nu)}\Bigl(\frac{z}{\tau}\Bigr) L_{j}^{(\nu)}\Bigl(\frac{w}{\tau}\Bigr) - \frac{N!\,\tau^{2N}}{\Gamma(N+\nu+1)}
L_{N}^{(\nu)}\Bigl(\frac{z}{\tau}\Bigr)
L_{N-1}^{(\nu+1)}\Bigl(\frac{w}{\tau}\Bigr)
\bigg].
\end{align*}
As a consequence together with \eqref{Laguerre2}, it follows that 
\begin{align*}
&\quad (1-\tau^2)\Bigl( z \partial_{z}^2\widehat{\cK}_N(z,w)
+ (\nu+1)\partial_{z}\widehat{\cK}_N(z,w) \Bigr)
-\frac{w}{1-\tau^2}\widehat{\cK}_N(z,w)
\\
&= e^{\frac{\tau}{1-\tau^2}(z+w)}\frac{N!}{\Gamma(N+\nu+1)}\frac{\tau^{2N-1}}{1-\tau^2}  (N+\nu)
 \bigg[
L_{N-1}^{(\nu)}\Bigl(\frac{z}{\tau}\Bigr)L_{N}^{(\nu)}\Bigl(\frac{w}{\tau}\Bigr) 
-
\tau^2 L_{N}^{(\nu)}\Bigl(\frac{z}{\tau}\Bigr)
L_{N-1}^{(\nu)}\Bigl(\frac{w}{\tau}\Bigr)
\bigg].
\end{align*}
This completes the proof. 
\end{proof}

Note that by change of variables, we have
\begin{align}
\begin{split} 
\label{NonSN}
&\quad\Bigl[
\frac{1-\tau^2}{mN}z\partial_z^2+\Bigl(\frac{(1-\tau^2)(m\nu+1)}{mN}+2\tau z\Bigr)\partial_z+mN\frac{\tau^2z-w}{1-\tau^2}+(m\nu+1)\tau
\Bigr]
S_{mN}^{(m\nu)}(z,w)
\\
&=
\frac{2(mN)^{m\nu+2}}{(1-\tau^2)^2}\frac{(mN)!\,\tau^{2mN-1}}{\Gamma(mN+m\nu)}
\bigg[
L_{mN-1}^{(m\nu)}\Bigl(\frac{mN}{\tau}z\Bigr)L_{mN}^{(m\nu)}\Bigl(\frac{mN}{\tau}w\Bigr)
-\tau^2L_{mN}^{(m\nu)}\Bigl(\frac{mN}{\tau}z\Bigr)L_{mN-1}^{(m\nu)}\Bigl(\frac{mN}{\tau}w\Bigr)
\bigg]. 
\end{split}
\end{align} 
In the latter analysis, we shall use this equation frequently for $m=1,2$. 

\subsection{Differential equation of the symplectic ensemble}

In this subsection, we prove Theorem~\ref{Thm_ODE} (ii). 

\begin{proof}[Proof of Theorem~\ref{Thm_ODE} (ii)]
To lighten notations, we write 
\begin{equation}
f_N(z,w):= -\sum_{k=0}^{N-1}\sum_{j=0}^{k}\frac{(2k)!! \, (2j-1)!! \,\tau^{2k+2j+1} }{2^{k+j}\, \Gamma(k+\nu+3/2) \Gamma(j+\nu+1)} L_{2k+1}^{(2\nu)}\Bigl(\frac{2z}{\tau}\Bigr)L_{2j}^{(2\nu)}\Bigl(\frac{2w}{\tau}\Bigr).
\end{equation}
By differentiating $f_N$ with respect to $z$ and using \eqref{DLaguerre1},
\begin{align*}
 \partial_z f_N(z,w) 
&=  \frac{2}{\tau} \sum_{k=0}^{N-1} \frac{(2k)! \,\tau^{4k+1} }{2^{2k}\, \Gamma(k+\nu+3/2) \Gamma(k+\nu+1)} L_{2k}^{(2\nu+1)}\Bigl(\frac{2z}{\tau}\Bigr)L_{2k}^{(2\nu)}\Bigl(\frac{2w}{\tau}\Bigr)
\\
&\quad + \frac{2}{\tau} \sum_{k=0}^{N-1}\sum_{j=0}^{k-1}\frac{(2k)!! \, (2j-1)!! \,\tau^{2k+2j+1} }{2^{k+j}\, \Gamma(k+\nu+3/2) \Gamma(j+\nu+1)} L_{2k}^{(2\nu+1)}\Bigl(\frac{2z}{\tau}\Bigr)L_{2j}^{(2\nu)}\Bigl(\frac{2w}{\tau}\Bigr).
\end{align*}
Here, by using \eqref{Laguerre1} and \eqref{Laguerre2}, we have 
\begin{align*}
L_{2k}^{(2\nu+1)}\Bigl(\frac{2z}{\tau}\Bigr) &=\frac{1}{2k} \bigg( (2k+1+2\nu) L_{2k-1}^{(2\nu+1)}\Bigl(\frac{2z}{\tau}\Bigr)  -  \frac{2z}{\tau} L_{2k-1}^{(2\nu+2)}\Bigl(\frac{2z}{\tau}\Bigr) \bigg) 
\\
&= \frac{1}{2k} \bigg( (2k+1+2\nu) \Big(   L_{2k-1}^{(2\nu)}\Bigl(\frac{2z}{\tau}\Bigr) + L_{2k-2}^{(2\nu+1)}\Bigl(\frac{2z}{\tau}\Bigr) \Big) -  \frac{2z}{\tau} L_{2k-1}^{(2\nu+2)}\Bigl(\frac{2z}{\tau}\Bigr) \bigg),
\end{align*}
which gives
\begin{align*}
 \partial_{z}f_N(z,w)
&= \frac{2}{\tau}\sum_{k=0}^{N-1}\frac{(2k)!\tau^{4k+1}}{2^{2k}\Gamma(k+\nu+3/2)\Gamma(k+\nu+1)}
L_{2k}^{(2\nu+1)}\Bigl(\frac{2z}{\tau}\Bigr)L_{2k}^{(2\nu)}\Bigl(\frac{2w}{\tau}\Bigr)
\\
&\quad +\frac{2}{\tau}
\sum_{k=1}^{N-1}\sum_{j=0}^{k-1}\frac{(2k-2)!!\,(2j-1)!!\,\tau^{2k+2j+1} }{2^{k+j} \Gamma(k+\nu+3/2)\Gamma(j+\nu+1)}
\\
&\quad \quad \times
\bigg[
(2k+1+2\nu)    \Big(   L_{2k-1}^{(2\nu)}\Bigl(\frac{2z}{\tau}\Bigr) + L_{2k-2}^{(2\nu+1)}\Bigl(\frac{2z}{\tau}\Bigr) \Big)  
- \frac{2z}{\tau}L_{2k-1}^{(2\nu+2)}\Bigl(\frac{2z}{\tau}\Bigr)L_{2j}^{(2\nu)}\Bigl(\frac{2w}{\tau}\Bigr)
\bigg].
\end{align*}

By rearranging the terms and multiplying $(2z/\tau)^{2\nu+1}$, it follows that
\begin{align*}
&\quad \Bigl(\frac{2z}{\tau}\Bigr)^{2\nu+1}\Bigl(\partial_{z}f_N(z,w)+2\tau f_N(z,w)\Bigr)
\\
&= \frac{2}{\tau}\sum_{k=0}^{N-1}\frac{(2k)!\tau^{4k+1}}{2^{2k}\Gamma(k+\nu+3/2)\Gamma(k+\nu+1)} \Bigl(\frac{2z}{\tau} \Bigr)^{2\nu+1}
L_{2k}^{(2\nu+1)}\Bigl(\frac{2z}{\tau} \Bigr)L_{2k}^{(2\nu)}\Bigl(\frac{2w}{\tau}\Bigr)
\\
& \quad +\frac{2}{\tau}\sum_{k=0}^{N-1}\sum_{j=0}^{k}\frac{(2k)!!\,(2j-1)!!\,\tau^{2k+2j+3} }{2^{k+j}\Gamma(k+\nu+3/2) \Gamma(j+\nu+1)}\Bigl(\frac{2z}{\tau} \Bigr)^{2\nu+1}
L_{2k}^{(2\nu+1)}\Bigl(\frac{2z}{\tau}\Bigr)L_{2j}^{(2\nu)}\Bigl(\frac{2w}{\tau}\Bigr)
\\
&\quad -\frac{2}{\tau}\sum_{k=0}^{N-1}\sum_{j=0}^{k}\frac{(2k)!!\,(2j-1)!! \,\tau^{2k+2j+3}}{2^{k+j+1}\Gamma(k+\nu+5/2)\Gamma(j+\nu+1)}\Bigl(\frac{2z}{\tau}\Bigr)^{2\nu+2}
L_{2k+1}^{(2\nu+2)}\Bigl(\frac{2z}{\tau}\Bigr)L_{2j}^{(2\nu)}\Bigl(\frac{2w}{\tau}\Bigr)
\\
&\quad -\frac{2}{\tau}\frac{(2N-2)!!\tau^{2N+1}}{2^{N-1}\Gamma(N+\nu+1/2)}
\Bigl(\frac{2z}{\tau}\Bigr)^{2\nu+1}L_{2N-1}^{(2\nu+1)}\Bigl(\frac{2z}{\tau} \Bigr)\sum_{j=0}^{N-1}\frac{(2j-1)!!\tau^{2j}}{2^j\Gamma(j+\nu+1)}L_{2j}^{(2\nu)}\Bigl(\frac{2w}{\tau}\Bigr)
\\
& \quad +\frac{2}{\tau}\frac{(2N-2)!!\tau^{2N+1}}{2^{N}\Gamma(N+\nu+3/2)}
\Bigl(\frac{2z}{\tau} \Bigr)^{2\nu+2}L_{2N-1}^{(2\nu+2)}\Bigl(\frac{2z}{\tau} \Bigr)\sum_{j=0}^{N-1}\frac{(2j-1)!!\tau^{2j}}{2^j\Gamma(j+\nu+1)}L_{2j}^{(2\nu)}\Bigl(\frac{2w}{\tau} \Bigr).
\end{align*}
Furthermore, by using the Legendre  duplication formula \cite[Eq.(5.5.5)]{NIST}
\begin{equation}\label{Gamma duplication}
\Gamma(2z)= \frac{2^{2z-1}}{ \sqrt{\pi} } \Gamma(z)\Gamma(z+1/2),
\end{equation}
and differentiating the above with respect to $z$, we obtain 
\begin{align*}
&\quad \Bigl(\frac{2z}{\tau}\Bigr)^{-2\nu}
\partial_{z}\bigg[\Bigl(\frac{2z}{\tau}\Bigr)^{2\nu+1}\Bigl(\partial_{z}f_N(z,w)+2\tau f_N(z,w)\Bigr)\bigg]
\\
&=   2\tau z\partial_{z}^2f_N(z,w)+2\tau \Big(2\nu+1-\frac{2z}{\tau}\Big)\partial_{z}f_N(z,w) +\frac{8z}{\tau} f_N(z,w)
\\
& \quad +  \frac{2^{2\nu+3}}{\sqrt{\pi}}
\sum_{k=0}^{N-1}\frac{(2k)!\tau^{4k-1}}{\Gamma(2k+2\nu+1)}
L_{2k}^{(2\nu)}\Bigl(\frac{2z}{\tau}\Bigr)L_{2k}^{(2\nu)}\Bigl(\frac{2w}{\tau}\Bigr)
\\
& \quad - \frac{(2N)!!\tau^{2N-1} }{2^{N-3}\Gamma(N+\nu+1/2)}L_{2N}^{(2\nu)}\Bigl(\frac{2z}{\tau}\Bigr)
\sum_{ j=0 }^{N-1}\frac{(2j-1)!!\tau^{2j}}{2^j\Gamma(j+\nu+1)}
L_{2j}^{(2\nu)}\Bigl(\frac{2w}{\tau}\Bigr), 
\end{align*}
where we used \eqref{Laguerre1}, \eqref{DLaguerre2} and \eqref{ODELaguerre}. 
Therefore, we have shown that $f_N(z,w)$ satisfies the differential equation 
\begin{align}
\begin{split} \label{ODEPart1}
&\quad \bigg[ \frac{1-\tau^2}{2}\,z \, \partial_{z}^2 
+\Big((2\nu+1)\frac{1-\tau^2}{2}+2\tau z\Big)\partial_{z} 
+ \tau(2\nu+1)-2z  \bigg] f_N(z,w) 
\\
&= \frac{2^{2\nu+1}}{\sqrt{\pi}}\sum_{k=0}^{N-1}\frac{(2k)!\tau^{4k} }{\Gamma(2k+2\nu+1)} L_{2k}^{(2\nu)}\Bigl(\frac{2z}{\tau}\Bigr)L_{2k}^{(2\nu)}\Bigl(\frac{2w}{\tau}\Bigr)
\\
&\quad -\frac{(2N)!!\tau^{2N}}{2^{N-1}\Gamma(N+\nu+1/2)} L_{2N}^{(2\nu)}\Bigl(\frac{2z}{\tau}\Bigr)
\sum_{ j=0 }^{N-1}\frac{(2j-1)!!\tau^{2j}}{2^j\Gamma(j+\nu+1)}L_{2j}^{(2\nu)}\Bigl(\frac{2w}{\tau}\Bigr).
\end{split}
\end{align}

Next, we shall derive the differential equation for $f_N(w,z)$. 
As before, by using \eqref{DLaguerre1}, we have 
\begin{align*}
\partial_{z}f_N(w,z)
&= -\frac{2}{\tau}\sum_{k=0}^{N-1}\frac{(2k+1)!\tau^{4k+3}}{2^{2k+1}\Gamma(k+\nu+3/2)\Gamma(k+\nu+2)}
L_{2k+1}^{(2\nu)}\Bigl(\frac{2w}{\tau}\Bigr)L_{2k+1}^{(2\nu+1)}\Bigl(\frac{2z}{\tau}\Bigr)
\\
&\quad + \frac{2}{\tau}\sum_{k=0}^{N-1}\sum_{j=0}^{k}\frac{(2k)!!(2j+1)!!\tau^{2k+2j+3} }{2^{k+j+1} \Gamma(k+\nu+3/2)\Gamma(j+\nu+2)}L_{2k+1}^{(2\nu)}\Bigl(\frac{2w}{\tau}\Bigr)L_{2j+1}^{(2\nu+1)}\Bigl(\frac{2z}{\tau}\Bigr).
\end{align*}
Note that by \eqref{Laguerre2}, we have 
\begin{align*}
(2j+1) L_{2j+1}^{(2\nu+1)}\Bigl(\frac{2z}{\tau}\Bigr)= (2j+2\nu+2)L_{2j}^{(2\nu+1)}\Bigl(\frac{2z}{\tau}\Bigr)-\frac{2z}{\tau}L_{2j}^{(2\nu+2)}\Bigl(\frac{2z}{\tau}\Bigr).
\end{align*}
Using this together with \eqref{Gamma duplication}, it follows that 
\begin{align*}
\partial_{z}f_N(w,z)
&= -\frac{2}{\tau}\sum_{k=0}^{N-1}\frac{(2k+1)!\tau^{4k+3}2^{2\nu+1}}{\sqrt{\pi}\, \Gamma(2k+2\nu+3)}
L_{2k+1}^{(2\nu)}\Bigl(\frac{2w}{\tau}\Bigr)L_{2k+1}^{(2\nu+1)}\Bigl(\frac{2z}{\tau}\Bigr)
\\
&\quad + \frac{2}{\tau}\sum_{k=0}^{N-1}\sum_{j=0}^{k}\frac{(2k)!!(2j-1)!!\tau^{2k+2j+3} }{2^{k+j}\Gamma(k+\nu+3/2)\Gamma(j+\nu+1)}L_{2k+1}^{(2\nu)}\Bigl(\frac{2w}{\tau}\Bigr)L_{2j}^{(2\nu+1)}\Bigl(\frac{2z}{\tau}\Bigr)
\\
& \quad -\frac{2}{\tau}\sum_{k=0}^{N-1}\sum_{j=0}^{k}\frac{(2k)!!(2j-1)!!\tau^{2k+2j+3}}{2^{k+j+1}\Gamma(k+\nu+3/2) \Gamma(j+\nu+2)}L_{2k+1}^{(2\nu)}\Bigl(\frac{2w}{\tau}\Bigr)
\frac{2z}{\tau} L_{2j}^{(2\nu+2)}\Bigl(\frac{2z}{\tau}\Bigr).
\end{align*}
Then by \eqref{Laguerre1}, we have  
\begin{align*}
&\quad \partial_{z}f_N(w,z)+2\tau f_N(w,z)
= -\frac{2}{\tau}\sum_{k=0}^{N-1}\frac{(2k+1)!\tau^{4k+3}2^{2\nu+1}}{\sqrt{\pi}\, \Gamma(2k+2\nu+3)}
L_{2k+1}^{(2\nu)}\Bigl(\frac{2w}{\tau}\Bigr)L_{2k+1}^{(2\nu+1)}\Bigl(\frac{2z}{\tau}\Bigr)
\\
&\quad + \frac{2}{\tau}\sum_{k=0}^{N-1}\sum_{ j=1 }^{k}\frac{(2k)!!(2j-1)!!\tau^{2k+2j+3} }{2^{k+j}\Gamma(k+\nu+3/2)\Gamma(j+\nu+1)}L_{2k+1}^{(2\nu)}\Bigl(\frac{2w}{\tau}\Bigr) L_{2j-1}^{(2\nu+1)}   \Bigl(\frac{2z}{\tau}\Bigr)
\\
& \quad -\frac{2}{\tau}\sum_{k=0}^{N-1}\sum_{j=0}^{k}\frac{(2k)!!(2j-1)!!\tau^{2k+2j+3}}{2^{k+j+1}\Gamma(k+\nu+3/2) \Gamma(j+\nu+2)}L_{2k+1}^{(2\nu)}\Bigl(\frac{2w}{\tau}\Bigr)
\frac{2z}{\tau} L_{2j}^{(2\nu+2)}\Bigl(\frac{2z}{\tau}\Bigr).
\end{align*}
Multiplying $(z/\tau)^{2\nu+1}$ and differentiating the above with respect to $z$ together with \eqref{Laguerre1} and \eqref{DLaguerre2},
\begin{align*}
& \quad \Bigl(\frac{2z}{\tau} \Bigr)^{-2\nu}   \partial_z   \bigg[\Bigl(\frac{2z}{\tau} \Bigr)^{2\nu+1}\Bigl(\partial_{z}f_N(w,z)+2\tau f_N(w,z)\Bigr)\bigg]
\\
&= - \sum_{k=0}^{N-1}\frac{(2k+1)!\tau^{4k+1}2^{2\nu+3}}{\sqrt{\pi} \, \Gamma(2k+2+2\nu)}
L_{2k+1}^{(2\nu)}\Bigl(\frac{2w}{\tau} \Bigr)L_{2k+1}^{(2\nu)}\Bigl(\frac{2z}{\tau} \Bigr)
\\
& \quad - \sum_{k=0}^{N-1}  \sum_{j=0}^{k}   \frac{(2k)!!(2j-1)!!\tau^{2k+2j+1} }{2^{k+j-2}\Gamma(k+\nu+3/2) \Gamma(j+\nu+1)}L_{2k+1}^{(2\nu)}\Bigl(\frac{2w}{\tau}\Bigr)
\frac{2z}{\tau} L_{2j}^{(2\nu+1)}\Bigl(\frac{2z}{\tau}\Bigr)
\\
& \quad + \sum_{k=0}^{N-1}  \sum_{j=1}^{k}   \frac{(2k)!!(2j-1)!!\tau^{2k+2j+1} }{2^{k+j-2}\Gamma(k+\nu+3/2) \Gamma(j+\nu+1)}L_{2k+1}^{(2\nu)}\Bigl(\frac{2w}{\tau}\Bigr)
(2j+2\nu) L_{2j-1}^{(2\nu)}\Bigl(\frac{2z}{\tau}\Bigr). 
\end{align*}
Hence, we have shown that $f_N(w,z)$ satisfies 
\begin{align}
\begin{split}
\label{ODEPart2}
&\quad  \bigg[ \frac{1-\tau^2}{2}\,z \, \partial_{z}^2 
+\Big((2\nu+1)\frac{1-\tau^2}{2}+2\tau z\Big)\partial_{z} 
+ \tau(2\nu+1)-2z  \bigg] f_N(w,z) 
\\
&= -\frac{2^{2\nu+1}}{\sqrt{\pi}}\sum_{k=0}^{N-1}\frac{(2k+1)!\tau^{4k+2}}{\Gamma(2k+1+1+2\nu)}
L_{2k+1}^{(2\nu)}\Bigl(\frac{2w}{\tau}\Bigr)L_{2k+1}^{(2\nu)}\Bigl(\frac{2z}{\tau}\Bigr). 
\end{split}
\end{align}

Finally, we combine \eqref{ODEPart1} with \eqref{ODEPart2} and conclude that 
\begin{align*}
& \quad  \bigg[ \frac{1-\tau^2}{2}\,z \, \partial_{z}^2 
+\Big((2\nu+1)\frac{1-\tau^2}{2}+2\tau z\Big)\partial_{z} 
+ \tau( 2\nu +1)-2z  \bigg] \Big( f_N(z,w) -  f_N(w,z) \Big)
\\
&= \frac{2^{2\nu+1}}{\sqrt{\pi}}
\sum_{k=0}^{2N-1}\frac{k!\tau^{2k}}{\Gamma(k+2\nu+1)}
L_{k}^{(2\nu)}\Bigl(\frac{2z}{\tau}\Bigr)L_{k}^{(2\nu)}\Bigl(\frac{2w}{\tau}\Bigr)
\\
&\quad  -\frac{(2N)!!\tau^{2N} }{2^{N-1}\Gamma(N+\nu+1/2)}  L_{2N}^{(2\nu)}\Bigl(\frac{2z}{\tau}\Bigr)
\sum_{j=0}^{N-1}\frac{(2j-1)!!\tau^{2j}}{2^j\Gamma(j+\nu+1)}L_{2j}^{(2\nu)}\Bigl(\frac{2w}{\tau}\Bigr).
\end{align*}
This completes the proof. 
\end{proof}

\begin{rem}\label{Rem_LSEBO}
By the change of variables, we also have 
\begin{align}
\label{RescaleODE}
\begin{split}
&\quad \bigg[\frac{1-\tau^2}{2N}z\, \partial_{z}^2
+\Big( (2\nu+1)\frac{1-\tau^2}{2N}+2\tau z \Big)\partial_{z}
+\Big(\tau(2\nu+1)-2Nz\Big)
\bigg]  \bfkappa_N(z,w)
\\
& = \frac{(2N)^{2\nu+3}}{(1-\tau^2)^2}
\sum_{k=0}^{2N-1}\frac{k!\tau^{2k}}{\Gamma(k+2\nu+1)}
L_{k}^{(2\nu)}\Bigl(\frac{2N}{\tau}z\Bigr)L_{k}^{(2\nu)}\Bigl(\frac{2N}{\tau}w\Bigr)
\\
& \quad -\frac{\sqrt{\pi}(2N)^{2\nu+3}(2N)!!\tau^{2N}}{2^{N+2\nu}(1-\tau^2)^2\Gamma(N+\nu+1/2)} \,L_{2N}^{(2\nu)}\Bigl(\frac{2N}{\tau}z\Bigr)
\sum_{j=0}^{N-1}\frac{(2j-1)!!\tau^{2j}}{2^j\Gamma(j+\nu+1)}L_{2j}^{(2\nu)}\Bigl(\frac{2N}{\tau}w\Bigr).
\end{split}
\end{align}
\end{rem}

In Theorem~\ref{Thm_ODE} (ii), the second inhomogeneous term in \eqref{ODE for kappaN} also satisfies a second order differential equation.
The proof is similar to that of Theorem~\ref{Thm_ODE} (i), and we leave it to the interested reader.

\begin{prop}[\textbf{Differential equation for the inhomogeneous term}]\label{Prop_RemainderODE}
Let 
\begin{equation} \label{def of gN}
g_N(w):=\sum_{j=0}^{N-1}\frac{(2j-1)!!\tau^{2j}}{2^j\Gamma(j+\nu+1)}L_{2j}^{(2\nu)}\Bigl(\frac{2N}{\tau}w\Bigr),
\end{equation}
Then we have 
\begin{equation}
\bigg[ \frac{1-\tau^2}{2N}w \partial_{w}^2
+\Bigl(\frac{1-\tau^2}{2N}(2\nu+1)+2\tau w\Bigr)\partial_{w} - 2N\Bigl(w-\tau\frac{2\nu+1}{2N}\Bigr) \bigg] g_N(w)= \frac{(2N-1)!!\tau^{2N-1}}{2^{N-1} \Gamma(N+\nu)}
L_{2N-1}^{(2\nu)}\Bigl(\frac{2N}{\tau}w\Bigr).
\end{equation}
\end{prop}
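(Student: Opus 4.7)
The plan is to mimic the strategy used for Theorem~\ref{Thm_ODE}(ii), but applied to the much simpler one-variable object $g_N$. Since $g_N(w)$ is a linear combination of $L_{2j}^{(2\nu)}(2Nw/\tau)$ for $j=0,\dots,N-1$ with weights
$$c_j := \frac{(2j-1)!!\,\tau^{2j}}{2^j\,\Gamma(j+\nu+1)},$$
it suffices to determine the action of the differential operator on a single $L_{2j}^{(2\nu)}(2Nw/\tau)$ and then telescope the resulting sum over $j$. A change of variables $x := 2Nw/\tau$ absorbs all the $N$-dependence and rewrites the operator on the left of the stated identity (up to an overall factor of $\tau$) as
$$\mathcal{D} := \frac{1-\tau^2}{\tau}\bigl(x\partial_x^2 + (2\nu+1-x)\partial_x\bigr) + \frac{1+\tau^2}{\tau}\,x\partial_x + \tau\bigl(2\nu+1-x\bigr),$$
where the first block is precisely the Laguerre ODE operator \eqref{ODELaguerre}, so it immediately contributes $-\frac{1-\tau^2}{\tau}\cdot 2j\,L_{2j}^{(2\nu)}(x)$ when applied to $L_{2j}^{(2\nu)}(x)$.

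The remaining two pieces I would handle by the elementary identities already compiled in Subsection~\ref{Subsec_Laguerre}. The derivative rule \eqref{DLaguerre1} rewrites $\partial_x L_{2j}^{(2\nu)}(x) = -L_{2j-1}^{(2\nu+1)}(x)$, and the recurrence \eqref{Laguerre2} then converts $x L_{2j-1}^{(2\nu+1)}$ back into a combination of $L_{2j-1}^{(2\nu)}$ and $L_{2j}^{(2\nu)}$. For the zeroth-order piece, the standard three-term recurrence (which follows by combining \eqref{Laguerre1} and \eqref{Laguerre2}) expresses $x L_{2j}^{(2\nu)}$ as a combination of $L_{2j-1}^{(2\nu)}$, $L_{2j}^{(2\nu)}$, and $L_{2j+1}^{(2\nu)}$. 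After collecting terms, the coefficient of $L_{2j}^{(2\nu)}$ will cancel exactly, leaving the compact formula
$$\mathcal{D}\,L_{2j}^{(2\nu)}(x) = -\frac{2j+2\nu}{\tau}\,L_{2j-1}^{(2\nu)}(x) + \tau(2j+1)\,L_{2j+1}^{(2\nu)}(x).$$

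With this in hand I would sum over $j$ weighted by $c_j$ and reindex the first sum by $j\mapsto j+1$. The key algebraic identity $c_{j+1}/c_j = (2j+1)\tau^2/(2(j+\nu+1))$ forces the two families of coefficients to match at every interior $j$, so the sum telescopes and only the boundary contribution $c_{N-1}\,\tau(2N-1)\,L_{2N-1}^{(2\nu)}(x)$ survives. Since $(2N-1)\cdot(2N-3)!! = (2N-1)!!$, this is exactly the claimed right-hand side. The only real obstacle is bookkeeping: tracking the three separate contributions to the coefficient of $L_{2j}^{(2\nu)}$ (from the Laguerre-ODE block, from $x L_{2j-1}^{(2\nu+1)}$, and from the three-term recurrence for $x L_{2j}^{(2\nu)}$) and verifying their exact cancellation. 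This is entirely analogous to, but notably cleaner than, the two-variable telescoping carried out in the proof of Theorem~\ref{Thm_ODE}(ii).
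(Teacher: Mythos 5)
Your proposal is correct. The paper does not actually write out a proof of Proposition~\ref{Prop_RemainderODE}; it only remarks that the argument is ``similar to that of Theorem~\ref{Thm_ODE}~(i)'' and leaves it to the reader, so you are filling a deliberate gap. Your computation checks out: with $x=2Nw/\tau$, the operator in $w$ transforms exactly into your $\mathcal D$, the grouping
\[
\mathcal{D} = \frac{1-\tau^2}{\tau}\bigl(x\partial_x^2 + (2\nu+1-x)\partial_x\bigr) + \frac{1+\tau^2}{\tau}\,x\partial_x + \tau\bigl(2\nu+1-x\bigr)
\]
indeed lets the Laguerre ODE \eqref{ODELaguerre} kill the second-order block, and a direct computation using \eqref{DLaguerre1}, \eqref{Laguerre2} and the three-term recurrence confirms both the cancellation of the $L_{2j}^{(2\nu)}$ coefficient and the one-step formula
\[
\mathcal{D}\,L_{2j}^{(2\nu)}(x) = -\frac{2j+2\nu}{\tau}\,L_{2j-1}^{(2\nu)}(x) + \tau(2j+1)\,L_{2j+1}^{(2\nu)}(x).
\]
The ratio $c_{j+1}/c_j = \tau^2(2j+1)/(2(j+\nu+1))$ is exactly right, so the sum telescopes to $\tau c_{N-1}(2N-1)L_{2N-1}^{(2\nu)}(x) = \frac{(2N-1)!!\tau^{2N-1}}{2^{N-1}\Gamma(N+\nu)}L_{2N-1}^{(2\nu)}(x)$, which is the stated right-hand side.

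Relative to the paper's implicit route: the proof of Theorem~\ref{Thm_ODE}~(i) proceeds via a gauge transformation $\widehat{\mathcal K}_N = e^{\tau(z+w)/(1-\tau^2)}\mathcal K_N$ and then differentiates and telescopes term by term; mimicking that here would mean working with $\widehat g_N := e^{2N\tau w/(1-\tau^2)}g_N$. You instead avoid the gauge transformation altogether by splitting the operator so the Laguerre ODE handles the second-order part, which yields a cleaner one-step recurrence and a more transparent telescope for this one-variable problem. Both routes rest on the same Laguerre identities \eqref{Laguerre1}--\eqref{ODELaguerre}; yours is a legitimate and arguably more economical organization. One small slip in your write-up: the parenthetical ``(up to an overall factor of $\tau$)'' is spurious. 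Under $x=2Nw/\tau$ the operator transforms \emph{exactly} to your $\mathcal D$, with no stray factor—as your own final answer confirms, since it reproduces the proposition's right-hand side verbatim rather than off by $\tau$.
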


It is indeed a slight abuse of notation as we already used the notation $g \equiv g_\tau$ in \eqref{GTZ}. Nonetheless, since the range of the subscript is clearly different, we will continue to use these notations.

\section{Scaling limits at strong non-Hermiticity}  \label{Section_strong}

In this section, we prove Theorem~\ref{Thm_StrongNonHermiticity}. 
Subsections \ref{Subsec_Thm strong (i) bulk} and \ref{Subsec_Thm strong (ii) bulk} are devoted to the proofs of bulk scaling limits, each of which is for the complex and symplectic ensembles. Similarly, Subsections \ref{Subsec_Thm strong (i) edge} and \ref{Subsec_Thm strong (ii) edge} are devoted to the proofs of edge scaling limits.

\subsection{Proof of Theorem~\ref{Thm_StrongNonHermiticity} (i), the bulk case} \label{Subsec_Thm strong (i) bulk}

By \eqref{RNk rescaled complex} and \eqref{def of KN c}, we need to analyse the correlation kernel $\bfK_N^{ \rm (c) }$. 
As we need to take the weight function into account, we use the transformation 
\begin{equation}
\label{BTSM}
\boldsymbol{S}_{N}(z,w) \equiv \boldsymbol{S}_{N}^{(\nu)}(z,w)
: = \sqrt{\frac{\pi}{2AN}}(zw)^{\frac{\nu}{2}+\frac{1}{4}}
\exp\Bigl(-\frac{N}{1-\tau^2}(2\sqrt{zw}-\tau(z+w))\Bigr)
S_{N}^{(\nu)}(z,w),
\end{equation}
where $S_N \equiv S_{N}^{(\nu)}$ is given by \eqref{def of SN}.
Note that $\bfK_{N}^{(\text{c})}$ is expressed in terms of $\boldsymbol{S}_{N}$ as 
\begin{equation}
\label{bfKc}
\bfK_{N}^{(\text{c})}(z,w)
=
\sqrt{\omega_N^{(\text{c})}(z)\omega_N^{(\text{c})}(w)}
\bigg(
\sqrt{\frac{\pi}{2AN}}(zw)^{\frac{\nu}{2}+\frac{1}{4}}
\exp\Bigl(-\frac{N}{1-\tau^2}(2\sqrt{zw}-\tau(z+w))\Bigr)
\bigg)^{-1} \boldsymbol{S}_{N}(z,w). 
\end{equation}
By the asymptotic expansion \cite[Eq.(10.40.2)]{NIST} of the modified Bessel function, for any fixed $\nu$, the weight function $\omega_N^{\rm c}$ satisfies the asymptotic behaviour  
\begin{equation} \label{omega N asymp}
\omega_N^{\rm c}(z)=|z|^{\nu} \sqrt{\frac{\pi}{2AN|z|}} e^{-AN|z|+BN\re z} \Bigl(1+\frac{4\nu^2-1}{8AN|z|}+O((AN|z|)^{-2}) \Bigr),
\end{equation}
as $N\to \infty,$ uniformly for $0<|z|<\infty$.
Therefore we have 
\begin{equation}  \label{WAB1}
\bfK_{N}^{(\text{c})}(z,w)
= c_N(z,\overline{w})
\Bigl(\frac{|zw|}{z\overline{w}}\Bigr)^{\nu/2}
\frac{e^{-\frac{N}{1-\tau^2}|\sqrt{z}-\sqrt{\overline{w}}|^2}}{(|zw|(z\overline{w}))^{1/4}}  \boldsymbol{S}_{N}(z,w)
\Bigl( 1 + O\Bigl(\frac{1}{N}\Bigr)\Bigr)
\end{equation}
as $N\to\infty$, uniformly for $0<|z|,|w|<\infty$, where 
\[
c_N(z,\overline{w})=e^{i\frac{N\tau}{1-\tau^2}(\im(w)-\im(z))}e^{i\frac{2N}{1-\tau^2}\im(\sqrt{z\overline{w}})}. 
\]
Note that $c_N$ is a cocycle (also known as a gauge transformation) which cancel out when forming a determinant. 

By \eqref{NonSN} and \eqref{BTSM}, we have
\begin{equation}
\label{ODE_BTS}
\bigg[
\frac{1-\tau^2}{2mN\sqrt{zw}}z\partial_z^2+\Bigl(1+\frac{1-\tau^2}{4mN\sqrt{zw}}\Bigr)\partial_z-\frac{(1-\tau^2)((2m\nu)^2-1)}{32mNz\sqrt{zw}}
\bigg]\boldsymbol{S}_{mN}^{(m\nu)}(z,w)
= \mathrm{I}_{mN}^{(m\nu)}(z,w),
\end{equation}
where 
\begin{align}
\begin{split}
\label{cIN}
\mathrm{I}_{mN}^{(m\nu)}(z,w)
&:=
\sqrt{\frac{\pi}{2AmN}}
(zw)^{\frac{m\nu}{2}-\frac{1}{4}}
\exp\Bigl(-\frac{mN}{1-\tau^2}(2\sqrt{zw}-\tau(z+w))\Bigr)
\frac{(mN)^{m\nu+2}}{(1-\tau^2)^2}\frac{(mN)!\cdot \tau^{2mN-1}}{\Gamma(mN+m\nu)}
\\
&\quad \times
\bigg[
L_{mN-1}^{(m\nu)}\Bigl(\frac{mN}{\tau}z\Bigr)L_{mN}^{(m\nu)}\Bigl(\frac{mN}{\tau}w\Bigr)
- \tau^2 L_{mN}^{(m\nu)}\Bigl(\frac{mN}{\tau}z\Bigr)L_{mN-1}^{(m\nu)}\Bigl(\frac{mN}{\tau}w\Bigr)
\bigg]. 
\end{split}
\end{align}
We shall analyse the large-$N$ asymptotic behaviour of the differential equation \eqref{BTSM}. 


The asymptotic behaviours of the generalised Laguerre polynomials in different regimes are given in Lemmas~\ref{Lem_Strong1}, \ref{StrongBulkFixNu}, and \ref{StrongEdgeFixNu}. As a consequence of these lemmas, together with Stirling's formula and the well-known asymptotic behaviour of the Airy function \cite[Eq.(9.7.5)]{NIST}, we have the following estimate for the generalised Laguerre polynomials.
Recall that $B_{R}(w)=\{z\in\C:|z-w|<R\}$. 

\begin{lem}\label{Lem_BDLaguerre}
Fix $\tau\in(0,1)$ and $\nu>-1$. 
For $z$ in a compact subset of $\C\backslash B_{\delta}(0)$ with a small $\delta>0$, we have 
\begin{equation}
\Bigl| e^{-N g_{\tau}(z)}L_{N+r}^{(\nu)}\Bigl(\frac{N}{\tau}z\Bigr) \Bigr|
\leq 
\frac{1}{\tau^{N+r}}\cdot O\Bigl(\frac{1}{N^{1/2}}\Bigr),
\end{equation}
where the $O(N^{-1/2})$-term is uniform for $z\in\C$.
\end{lem}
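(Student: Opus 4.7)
The plan is to cover the compact set $K \subset \C\setminus B_{\delta}(0)$ by three subregions, corresponding to the three regimes for the strong asymptotics of the Laguerre polynomials (see Figure~\ref{Fig_PR regimes}), and to apply in each subregion the appropriate asymptotic formula among Lemmas~\ref{Lem_Strong1}, \ref{StrongBulkFixNu}, and \ref{StrongEdgeFixNu}. Fix a small $\eta>0$ and decompose
\begin{equation*}
K = K_{\rm ext}\,\cup\, K_{\rm bulk}\,\cup\, K_{\rm edge},
\end{equation*}
where $K_{\rm ext}:=\{z\in K:\mathrm{dist}(z,[0,4\tau])\ge \eta\}$, $K_{\rm bulk}:=K\cap\{z\in(0,4\tau):|z-4\tau|\ge \eta\}$, and $K_{\rm edge}$ is a small neighbourhood of $4\tau$ in $K$. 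We then establish the required bound uniformly on each piece.

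On $K_{\rm ext}$, Lemma~\ref{Lem_Strong1} applies and immediately yields
\begin{equation*}
e^{-Ng_{\tau}(z)}L_{N+r}^{(\nu)}\!\Bigl(\frac{N}{\tau}z\Bigr)
=\frac{(-1)^{N+r}}{\tau^{N+r}}\,\frac{1}{\sqrt{2\pi N}}\,\frac{\psi(z)^{r+\nu/2}\sqrt{\psi'(z)}}{z^{\nu/2}}\Bigl(1+O(1/N)\Bigr),
\end{equation*}
so the prefactor is uniformly bounded on $K_{\rm ext}$ and we obtain the desired $O(\tau^{-(N+r)}N^{-1/2})$ estimate. On $K_{\rm bulk}$, writing $L_{N+r}^{(\nu)}(Nz/\tau)=L_{N+r}^{(\nu)}(4(N+r)\tilde z)$ with $\tilde z = Nz/(4(N+r)\tau)\to z/(4\tau)\in(0,1)$, Lemma~\ref{StrongBulkFixNu} yields a representation with leading exponential factor $e^{2(N+r)\tilde z}=e^{Nz/(2\tau)}$ and oscillatory cosines of modulus bounded by a constant. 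A direct computation from \eqref{GTZ} using $\sqrt{(z-2\tau)^2-4\tau^2}=i\sqrt{4\tau z-z^2}$ on $(0,4\tau)$ gives $\re g_{\tau}(z)=z/(2\tau)+\log\tau$, so that $e^{Nz/(2\tau)}=\tau^{-N}e^{N\,\re g_{\tau}(z)}$, which combined with Stirling's formula for $\sqrt{(N+\nu)!/N!}$ yields precisely the bound $\tau^{-(N+r)}O(N^{-1/2})$.

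On $K_{\rm edge}$ we separate two sub-cases depending on whether $|\im z|$ is bounded away from $0$ or not. If $|\im z|\ge \eta'$ for some $0<\eta'\ll\eta$, then $z\notin[0,4\tau]$ and Lemma~\ref{Lem_Strong1} applies as above, with the constants remaining uniform because the prefactor $\psi(z)^{r+\nu/2}\sqrt{\psi'(z)}/z^{\nu/2}$ is continuous on a neighbourhood of $4\tau$ off the cut. If instead $z$ is close to the real axis, we apply Lemma~\ref{StrongEdgeFixNu} with $\xi = 2^{-4/3}N^{2/3}(z/\tau-4)$. For $\xi$ bounded the Airy function is bounded, and $e^{-Ng_{\tau}(z)}$ matches $e^{-x/2}$ up to a uniformly bounded factor via a Taylor expansion of $g_{\tau}$ at $4\tau$; for $|\xi|\to\infty$ the classical Airy asymptotics
\begin{equation*}
\Ai(\xi)\sim \frac{1}{2\sqrt{\pi}}\xi^{-1/4}e^{-\frac{2}{3}\xi^{3/2}}\qquad(\xi\to\infty,\,|\arg\xi|<\pi)
\end{equation*}
and its oscillatory counterpart for $\xi<0$ match the bulk/exterior asymptotic of $e^{-Ng_{\tau}(z)}L_{N+r}^{(\nu)}(Nz/\tau)$ from the other regimes, via the standard identity $g_{\tau}(z)-(z/(2\tau)+\log\tau)=\pm\frac{2}{3}\xi^{3/2}N^{-1}(1+o(1))$ at the edge.

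The main obstacle is the matching in the transition zone near $z=4\tau$: the exponential, oscillatory, and Airy regimes each describe the polynomial on overlapping subsets, and one must verify that the prefactors (containing $\psi(z)^{r+\nu/2}\sqrt{\psi'(z)}$, $(z(1-z/(4\tau)))^{-1/4}$, and $N^{-1/3}$ respectively) combine consistently with $e^{-Ng_{\tau}(z)}$ to yield a bound of order $N^{-1/2}$ uniformly in $z$, with no cancellation between exponentials being spoiled by the branch choices in the Schwarz function \eqref{Schwarz1}. Once this matching is done, the three estimates glue together on $K$ and the lemma follows.
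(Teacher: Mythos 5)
Your three-regime decomposition, using Lemma~\ref{Lem_Strong1} away from $[0,4\tau]$, Lemma~\ref{StrongBulkFixNu} along the open segment, and Lemma~\ref{StrongEdgeFixNu} near the endpoint $4\tau$, is exactly the route the paper's own proof takes, and your bulk computation (the identity $\re g_\tau(z)=z/(2\tau)+\log\tau$ on $(0,4\tau)$, followed by Stirling) correctly delivers $\tau^{-(N+r)}O(N^{-1/2})$ there and on $K_{\rm ext}$.

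However, your treatment of the critical zone hides a real discrepancy that you should confront rather than defer to ``matching.'' For $z$ within $O(N^{-2/3})$ of $4\tau$, i.e.\ $\xi$ bounded, Lemma~\ref{StrongEdgeFixNu} gives $e^{-x/2}L_{N+r}^{(\nu)}(x) = O(N^{-1/3})$ since $\Ai(\xi)$ does not vanish, and since $g_\tau(4\tau)=2+\log\tau$ one has $e^{-Ng_\tau(z)}=\tau^{-N}e^{-x/2}(1+o(1))$ there; so your own argument yields $\tau^{-N}\,O(N^{-1/3})$ near the edge, which is larger (by a factor $N^{1/6}$) than the $\tau^{-(N+r)}O(N^{-1/2})$ asserted by the lemma. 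You cannot ``glue'' away this $N^{1/6}$: the $N^{-1/2}$ rate of the oscillatory and exterior regimes is only recovered once $|\xi|\to\infty$, where $\Ai(\xi)\sim|\xi|^{-1/4}$ supplies the missing $N^{-1/6}$. Thus either the compact subset in the lemma must be understood to avoid a fixed neighbourhood of $4\tau$, or the stated rate should be read as $O(N^{-1/3})$; as stated and read literally, your plan cannot establish it. (For the paper's applications this is harmless: in Propositions~\ref{Lem_OPDens} and~\ref{Prop_Bulk} the Laguerre bound is always multiplied by $e^{-N\Omega(z)}$ with $\Omega$ bounded below, so any polynomial rate suffices.) Finally, a small but genuine error in your write-up: the prefactor $\psi(z)^{r+\nu/2}\sqrt{\psi'(z)}/z^{\nu/2}$ is \emph{not} continuous at $4\tau$. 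Since $\phi'(\tau)=0$, the inverse map $\psi$ has $\psi'(z)\sim|z-4\tau|^{-1/2}$ near the focus, so $\sqrt{\psi'(z)}\sim|z-4\tau|^{-1/4}$ blows up; your off-axis estimate on $K_{\rm edge}$ is fine only because you fix $\eta'>0$, and this blow-up is precisely the manifestation of the matching difficulty you flag.
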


We define 
\begin{equation}
\label{OmegaF}
\Omega(z) := 2\re\Bigl[\frac{|z|-\tau z}{1-\tau^2}- g_\tau(z)\Bigr],
\end{equation} 
where $g_\tau$ is given by \eqref{GTZ}. 
See Figure~\ref{Fig_Omega} for the graph of $z \mapsto \Omega(z).$
We show the non-negativity of $\Omega$, which is indeed a characteristic property of the $g$-function. 

\begin{lem}
\label{Lem_OPOV}
For $z\in \C$, we have $\Omega(z)\geq 0$, and the equality holds only when $z\in \partial S$.
\end{lem}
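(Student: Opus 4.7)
The plan is to prove the inequality by pulling $\Omega$ back to the $w$-plane via the conformal map $\phi$ introduced in Subsection~\ref{Subsec_Laguerre}. Recall that $\psi=\phi^{-1}$ extends to a biholomorphism from $\C\setminus[0,4\tau]$ onto $\{|w|>\tau\}$; under this correspondence, $|w|>1$ parametrises $\C\setminus S$ and $\tau<|w|<1$ parametrises $S\setminus[0,4\tau]$, while $|w|=1$ parametrises $\partial S$. The key simplification is that the $g$-function takes a closed form under $\phi$: using $R(z)=w-\tau^2/w$ and $z+R(z)=2(w+\tau)$ together with the factorisation $z\phi(w)=(w+\tau)^2$, one checks
\begin{equation*}
g_\tau(\phi(w))=1+\frac{\tau}{w}+\log w,\qquad |\phi(w)|=\frac{|w+\tau|^2}{|w|}.
\end{equation*}
With $w=re^{i\theta}$ and $c=\cos\theta$, these formulas reduce $\Omega(\phi(w))$ to an explicit elementary expression in $(r,c)$, and the proof becomes a one-dimensional analysis in $r$ along each radial line.

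Next, I would verify directly that $\Omega(\phi(w))$ vanishes on $|w|=1$. After substitution, the bracket multiplying $2/(1-\tau^2)$ equals $(1+\tau^2)(1-\tau c)-2\tau(\tau-c)$, which factors as $(1-\tau^2)(1+\tau c)$; together with the $-2-2\tau c$ from the other terms, this telescopes to $0$. So $\Omega\equiv 0$ on $\partial S$.

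The core computation is then to analyse
\begin{equation*}
\partial_r\Omega(\phi(re^{i\theta}))=\frac{2(r^2-\tau^2)(1-\tau c)}{r^2(1-\tau^2)}+\frac{2\tau c}{r^2}-\frac{2}{r}.
\end{equation*}
Clearing denominators turns the numerator into a quadratic in $r$ with $r=1$ as a root; the remaining factorisation reads
\begin{equation*}
r^2(1-\tau c)-r(1-\tau^2)-\tau(\tau-c)=(r-1)\bigl[(1-\tau c)r+\tau(\tau-c)\bigr].
\end{equation*}
The second bracket is linear and increasing in $r$, and at $r=\tau$ it evaluates to $\tau(1+\tau)(1-c)\ge 0$, hence it is strictly positive for $r>\tau$ with the single exception $(r,c)=(\tau,1)$. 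Therefore $\partial_r\Omega<0$ on $\tau<r<1$, $\partial_r\Omega>0$ on $r>1$, and $\partial_r\Omega=0$ only at $r=1$ (along each ray). This shows $\Omega(\phi(w))$ attains its minimum value $0$ exactly on $|w|=1$, giving $\Omega\ge 0$ on $\C\setminus[0,4\tau]$ with equality precisely on $\partial S$. Continuity of $\Omega$ across the focal segment $[0,4\tau]\subset\mathrm{int}(S)$ (which is disjoint from $\partial S$) then extends the inequality and the equality case to all of $\C$.

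The main obstacle is purely algebraic: obtaining the clean closed form $g_\tau\circ\phi=1+\tau/w+\log w$ and the factorisation of $\partial_r\Omega$, without which the one-dimensional monotonicity argument would not go through. Once these identities are in place, the rest is routine.
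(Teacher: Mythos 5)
Your proposal is correct and follows essentially the same route as the paper: pull $\Omega$ back through $\phi$ to the exterior $\{|w|>\tau\}$, use the closed form $g_\tau(\phi(w))=1+\tau/w+\log w$ in polar coordinates, and observe that the radial derivative has the sign of $r-1$ (since the other factor is strictly positive for $r>\tau$), which pins the minimum value $0$ to the unit circle $|w|=1$, i.e.\ to $\partial S$. The paper's proof additionally records the $\theta$-derivative and checks $\widetilde\Omega(\tau e^{i\theta})>0$ directly for the slit, whereas you invoke continuity plus strict radial monotonicity for that last step — the substance of the factorisation of $\partial_r\Omega$ and the conclusion are the same (you might want to spell out that the strict decrease on $\tau<r<1$, not merely continuity, is what yields strict positivity on the focal segment).
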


\begin{proof}
We write 
\begin{equation}
\label{W Omega}
\widetilde{\Omega}(u):=\frac{1}{2}\Omega(\phi(u)),
\end{equation}
where $\phi$ is the conformal map given in \eqref{ConformalMap1}. 
Then it is enough to show
\begin{equation}
\{u\in\C\backslash B_{\tau}(0)\,|\,\widetilde{\Omega}(u)\leq 0\}=\partial\D.
\end{equation}
Note that 
\begin{equation}
\label{Polar g tau}
g_{\tau}(\phi(u))=\log u+\frac{\tau}{u}+1. 
\end{equation}
By using the polar coordinate $u=Re^{i\theta}$ for $R>0$ and $\theta\in[0,2\pi)$, we have 
\begin{align} \label{W Omega v2}
\widetilde{\Omega}(Re^{i\theta}) = \frac{1}{1-\tau^2}
\Bigl(
\frac{R^2+\tau^2+2 R\tau \cos \theta}{ R } -\tau\Bigl(R+\frac{\tau^2}{R}\Bigr)\cos\theta-2\tau^2
\Bigr) - \log R- \frac{\tau}{R}\cos\theta-1,
\end{align}
where we have used $R^2+\tau^2+2R\tau\cos\theta\geq(R-\tau)^2\geq 0$ for all $\theta\in[0,2\pi)$ and $\tau\in(0,1)$. 
Here, note that 
\begin{equation}
\label{W Omega 0}
    \widetilde{\Omega}(e^{i\theta})=0, \qquad \theta\in[0,2\pi). 
\end{equation}
By differentiating \eqref{W Omega v2} with respect to $\theta$ and $R$, 
we have 
\begin{align}
\label{W Omega theta}
\partial_{\theta}\widetilde{\Omega}(Re^{i\theta})
&=
\frac{\tau (R-1)^2 \sin\theta}{(1-\tau^2)R},
\\
\label{W Omega R}
\partial_{R}\widetilde{\Omega}(Re^{i\theta})& =
\frac{(R-1)(R+\tau^2-\tau(R+1)\cos\theta)}{(1-\tau^2)R}. 
\end{align}
Note that the right-hand side of \eqref{W Omega theta} vanishes only when $R=1$ or $\sin\theta=0$. 
When $R=1$, \eqref{W Omega R} also vanishes. 
Since $R+\tau^2-\tau(R+1)\cos\theta\geq(R-\tau)(1-\tau)$ for all $\theta\in[0,2\pi)$, 
when $\sin\theta=0$, \eqref{W Omega R} vanishes only when $R=\tau$. 
Note also that for $\tau \in (0,1)$, 
$$
\widetilde{\Omega}(\tau e^{i\theta}) = -\frac{(1-\tau)(1+\cos\theta)}{1+\tau}-\log\tau \ge -\frac{2(1-\tau)}{1+\tau}-\log\tau > 0. 
$$
This completes the proof. 
\end{proof}

\begin{figure}[t]
    \centering
    \includegraphics[width=0.5\textwidth]{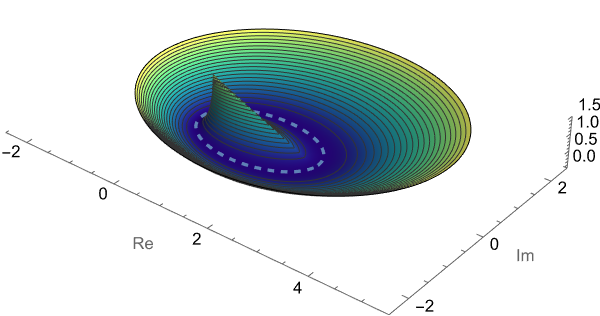}
    \caption{Graph of the function $z \mapsto \Omega(z)$. Here the dashed line indicates the ellipse \eqref{S droplet}.}
    \label{Fig_Omega}
\end{figure}

We now consider the diagonal value $\bfK_N^{ \rm c }(z,\overline{z})$. 
Recall that the droplet $S$ is given by \eqref{S droplet}. 
In the following proposition, we provide the exponential convergence rate of the equilibrium measure \eqref{MeasMu}, i.e. the rate of convergence of \cite[Theorem 1]{ABK21}.   
Note that for an $N$-independent weight function, the asymptotic expansion of the density is known in the literature, see e.g. \cite[Eq.(1.3)]{Am13}.

\begin{prop}\label{Lem_OPDens}
Let $V_{S}$ be a compact subset of $\mathrm{int}(S)$ and $U_{S}$ a neighbourhood of $S$. 
Then, there exists $\epsilon>0$ such that as $N\to \infty$, 
\begin{equation}
\boldsymbol{S}_N^{(\nu)}(z,\overline{z})
=
\begin{cases}
\dfrac{N}{2(1-\tau^2)}+O(e^{-\epsilon N}), & z\in V_{S}, 
\smallskip 
\\
O(e^{-\epsilon N}) & z \not\in U_{S},
\end{cases}
\end{equation}
uniformly over all $z$.
\end{prop}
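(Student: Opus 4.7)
The plan is to analyse the diagonal quantity $\boldsymbol{S}_N^{(\nu)}(z,\overline{z})$ by decomposing it via the orthogonal polynomial representation $S_N^{(\nu)}(z,\overline{z})=\sum_{j=0}^{N-1}|p_j(z)|^2/h_j$ and combining the Plancherel--Rotach type asymptotics from Section~\ref{Section_preliminaries}, the positivity of $\Omega$ from Lemma~\ref{Lem_OPOV}, and standard potential-theoretic arguments for random normal matrix ensembles.

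For the exterior estimate $z\notin U_S$, I would apply the exponential regime asymptotic (Lemma~\ref{Lem_Strong1}) to $L_j^{(\nu)}(Nz/\tau)$ for $j$ near $N$, where Lemma~\ref{Lem_BDLaguerre} supplies the uniform-in-$z$ bound $|L_{N+r}^{(\nu)}(Nz/\tau)|^2\le C\,\tau^{-2(N+r)}|e^{N g_\tau(z)}|^2 N^{-1}$. Combined with the prefactor $e^{-2NF(z)}$ in \eqref{BTSM}, where $F(z)=(|z|-\tau\re z)/(1-\tau^2)$, the exponential part of the $j$-th term contributes $e^{-N(2F(z)-2\re g_\tau(z))}=e^{-N\Omega(z)}$, and Lemma~\ref{Lem_OPOV} furnishes a uniform lower bound $\Omega(z)\ge\epsilon>0$ on compact subsets of $U_S^{\mathrm{c}}$. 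The remaining terms with $j/N\to\alpha\in(0,1)$ can be handled by a uniform-in-$\alpha$ version of Lemma~\ref{Lem_BDLaguerre} obtainable from the general Plancherel--Rotach theory, or, more conceptually, by a sub-mean value inequality applied to the subharmonic function $|p_j|^2$ on a disk of fixed radius, combined with the orthogonality $\int|p_j|^2\omega_N^{\rm c}\,dA=h_j$; the strict positivity of the effective potential off $S$ coming from the equilibrium measure analysis of \cite{ABK21} then delivers the same exponential decay directly.

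For the bulk estimate $z\in V_S$, the exponential asymptotic is unavailable since the Laguerre polynomials become oscillatory (Lemma~\ref{StrongBulkFixNu}). Instead, I would establish the rigidity $\bfR_{N,1}^{\rm c}(z)=\omega_N^{\rm c}(z)S_N^{(\nu)}(z,\overline{z})=N\mu(z)+O(e^{-\epsilon N})$ via subharmonicity of $|p_j|^2$ together with a matching lower bound obtained from Hörmander $\bp$-estimates, as in the random normal matrix framework developed in \cite{AHM11}. The main adaptation is to handle the $N$-dependent Bessel factor $K_\nu(AN|z|)$ in the weight, which by \eqref{omega N asymp} reduces, uniformly on compact sets bounded away from the origin, to a smooth $N$-independent potential up to sub-exponential corrections. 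Transferring back via the identity $\boldsymbol{S}_N^{(\nu)}(z,\overline{z})=|z|\cdot\bfR_{N,1}^{\rm c}(z)\cdot(1+O(N^{-1}))$ arising from \eqref{BTSM} and \eqref{omega N asymp} cancels the $|z|^{-1}$ singularity of $\mu$ in \eqref{MeasMu} and produces the claimed constant $N/(2(1-\tau^2))$.

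The main difficulty is twofold: ensuring uniformity in $j$ for the exterior estimate, since Lemma~\ref{Lem_BDLaguerre} is only stated for $j=N+r$ with $r$ fixed; and adapting the random normal matrix rigidity arguments to accommodate both the $N$-dependent Bessel weight and the singular behaviour of $\mu$ at the origin $z=0$. Both are technical rather than conceptual obstacles, and are resolvable by the subharmonicity route together with careful tracking of the sub-exponential prefactors so as not to erode the exponential error rate coming from Lemma~\ref{Lem_OPOV}.
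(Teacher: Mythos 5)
Your route is genuinely different from the paper's, and it has a couple of concrete gaps. The paper does not decompose $S_N^{(\nu)}(z,\overline{z})$ term by term at all. Instead, it exploits the differential equation of Theorem~\ref{Thm_ODE} via the transformed version \eqref{ODE_BTS}: the inhomogeneous term $\mathrm{I}_N^{(\nu)}(z,\overline{z})$ in \eqref{cIN} involves only $L_{N-1}^{(\nu)}$ and $L_N^{(\nu)}$, so the uniform bound of Lemma~\ref{Lem_BDLaguerre} (which, as you observe, is stated only for $j=N+r$ with $r$ fixed, and that is \emph{all} the paper needs) together with the positivity of $\Omega$ from Lemma~\ref{Lem_OPOV} yields $|\mathrm{I}_N^{(\nu)}(z,\overline{z})|\le Ce^{-N\Omega(z)}=O(e^{-\epsilon N})$ on compact subsets of $\mathrm{int}(S)$. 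Integrating the ODE then gives $\boldsymbol{S}_N^{(\nu)}(z,\overline{z})=\mathfrak{c}_1(N)+O(e^{-\epsilon N})$ with a $z$-independent integration constant, which is identified as $N/(2(1-\tau^2))$ by the macroscopic convergence of \cite[Theorem 1]{ABK21} combined with \eqref{WAB1}. The exterior case is handled identically, and the constant $\mathfrak{c}_2(N)$ is forced to be exponentially small because $\Omega$ grows linearly at infinity and the density integrates to $N$. In short, the ODE method reduces the whole proposition to one finite-degree Laguerre estimate plus a normalisation, which is the point of introducing Theorem~\ref{Thm_ODE} in the first place.

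Your bulk argument proposes instead to re-derive the rigidity of \cite{AHM11} via Hörmander $\bp$-estimates, adapted to the $N$-dependent Bessel weight. This is precisely the obstruction the paper flags after Theorem~\ref{Thm_StrongNonHermiticity} (``is not a direct consequence of \cite{AHM11,HW21}, as the weight function \eqref{weight functions} depends highly on $N$''), and avoiding it is exactly why the paper takes the ODE route; labelling it ``technical rather than conceptual'' understates the problem, since $K_\nu(AN|z|)$ enters the potential at leading order in $N$ and cannot be treated as a lower-order perturbation of an $N$-independent metric. Even granting that framework, there is a quantitative mismatch: your transfer identity $\boldsymbol{S}_N^{(\nu)}(z,\overline{z})=|z|\cdot\bfR_{N,1}^{\rm c}(z)\cdot(1+O(N^{-1}))$ carries a multiplicative $(1+O(N^{-1}))$ from \eqref{omega N asymp}, and since $\bfR_{N,1}^{\rm c}=O(N)$ in the bulk this is an additive $O(1)$ error; moreover $\bp$-based density expansions deliver polynomial, not exponential, corrections. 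So at best you recover $\boldsymbol{S}_N^{(\nu)}(z,\overline{z})=N/(2(1-\tau^2))+O(1)$, not the claimed $O(e^{-\epsilon N})$. Your outer estimate via subharmonicity and orthogonality is a legitimate alternative, but it still requires a uniform-in-$j$ control of the summands that the ODE argument simply never needs.
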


\begin{proof} 
We fix $z\in V_{S}\subset \mathrm{int}(S)\backslash B_{\delta}(0)$ with a small $\delta>0$. 
Then for a sufficiently large $N$, it follows from Lemma~\ref{Lem_BDLaguerre} and \eqref{cIN} that 
\begin{align}
\begin{split}
\label{BEST1}
\bigl|\mathrm{I}_N^{(\nu)}(z,\overline{z})\bigr|
\leq 
C N \tau^{2N-1}
\Bigl| 
e^{-\frac{N}{1-\tau^2}(2\sqrt{z\overline{z}}-\tau(z+\overline{z}) )}
L_{N-1}^{(\nu)}\Bigl(\frac{N}{\tau}z \Bigr)L_{N}^{(\nu)}\Bigl(\frac{N}{\tau}\overline{z} \Bigr)
\Bigr|
\leq 
C e^{-N\Omega(z)}, 
\end{split}
\end{align}
where $C$ does not depend on $N$, but may change in each line. 
Since $\Omega$ is strictly positive over a compact subset of $\mathrm{int}(S)$ by Lemma~\ref{Lem_OPOV}, the right hand side of this inequality is uniformly bounded by $O(e^{-\epsilon N})$ with $\min_{z}\Omega(z)>\epsilon>0$
over a compact subset of $\mathrm{int}(S)$. 
Therefore, by \eqref{ODE_BTS}, we obtain 
\[
\boldsymbol{S}_N^{(\nu)}(z,\overline{z})=\mathfrak{c}_1(N)+O(e^{-\epsilon N}), \qquad z \in V_S.
\]
Note that $\mathfrak{c}_1(N)$ is an integrating constant which does not depend on $z$. 
By \cite[Theorem 1]{ABK21}, we have that for a bounded continuous function $f$, 
\[
\int f(z) \frac{1}{N}\bfK_N^{ \rm c }(z,\overline{z})dA(z)\to \int f(z) d\mu(z),\quad \text{as $\to\infty$}, 
\]
where $d\mu(z)$ is defined by \eqref{MeasMu}. 
Together with \eqref{WAB1}, we obtain that $\mathfrak{c}_1(N)=N^{}/(2(1-\tau^2))$. 

By Lemma~\ref{Lem_OPOV}, we have the same estimate \eqref{BEST1} over the complement of a neighbourhood $U_{S}$ of $S$. 
By integrating from a fixed point $z_0$ in the complement of $U_{S}$ and \eqref{ODE_BTS}, we have 
\[
\boldsymbol{S}_N^{(\nu)}(z,\overline{z})=\mathfrak{c}_2(N)+O(e^{-\epsilon N}),
\]
where $O(e^{-\epsilon N})$-term is uniform over the complement of $U_{S}$. 
The complement of $U_{S}$ is unbounded, but since $\Omega(z)$ in \eqref{OmegaF} grows linearly as $|z|\to\infty$, the integration of $\frac{1}{N}\bfK_N^{ \rm c }(z,\overline{z})$ converges. 
Therefore, it follows that $\mathfrak{c}_2(N)$ is exponentially small. 
This completes the proof. 
\end{proof}

Next, let us consider the off-diagonal asymptotic behaviour of the kernel $\bfK_N^{ \rm c }$. 

\begin{prop}\label{Prop_Bulk}
Let $V_{S}$ be a compact subset of $\mathrm{int}(S)$ and $U_{S}$ a neighbourhood of $S$. 
Then, as $N \to \infty$, there exist $\epsilon>0$ and $\delta>0$ such that the following asymptotic expansion holds uniformly over all $z,w$ in the complement of $B_{\delta'}(0)$ with a small $\delta'>0$. 
\begin{itemize}
    \item If $w\in V_{S}$, $z\in B_{\delta}(w)$,
\begin{equation}  \label{KN asymptotic 1}
\bfK_N^{ \rm c }(z,\overline{w})= \frac{c_N(z,\overline{w})N}{2(|zw|(z\overline{w}))^{1/4}(1-\tau^2)} \Bigl(\frac{|zw|}{z\overline{w}}\Bigr)^{\nu/2}e^{-\frac{N}{1-\tau^2}|\sqrt{z}-\sqrt{\overline{w}}|^2}
\Bigl(1+O(\frac{1}{N}) \Bigr). 
\end{equation}
\item If $w\in V_{S}$, $z\notin B_{\delta}(w)$ or if $w\notin U_{S}$, $z\in \C$, 
\begin{equation}  \label{KN asymptotic 2}
\bfK_N^{ \rm c }(z,\overline{w})=  O(e^{-\epsilon N}) . 
\end{equation}
\end{itemize}
\end{prop}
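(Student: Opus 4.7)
The strategy parallels the proof of Proposition~\ref{Lem_OPDens}: I would propagate the diagonal asymptotics to off-diagonal arguments using the differential equation \eqref{ODE_BTS} for $\boldsymbol{S}_N^{(\nu)}(\cdot,w)$, then convert back to the kernel via \eqref{WAB1}. First, using Lemma~\ref{Lem_BDLaguerre} to bound the Laguerre polynomials appearing in \eqref{cIN}, a direct computation gives
\[
|\mathrm{I}_N^{(\nu)}(\zeta,w)| \leq \tfrac{C}{\sqrt N}\exp\bigl(-N\Phi(\zeta,w)\bigr),\quad \Phi(\zeta,w):= \tfrac{2\re\sqrt{\zeta w}-\tau\re(\zeta+w)}{1-\tau^2}-\re g_\tau(\zeta)-\re g_\tau(w),
\]
and the identities $\sqrt{\bar w\, w}=|w|$ and $\re g_\tau(\bar w)=\re g_\tau(w)$ reduce $\Phi(\bar w,w)$ to $\Omega(w)$, which is strictly positive on $V_S$ by Lemma~\ref{Lem_OPOV}.

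Next, by continuity of $\Phi$ and compactness of $V_S$, there exist $\delta>0$ (as in the statement) and $\epsilon>0$ such that $\Phi(\zeta,w)\geq 2\epsilon$ uniformly for $w\in V_S$ and $\zeta$ in a conjugation-symmetric neighbourhood containing a short contour $\gamma$ from $\bar w$ to any $z\in B_\delta(w)$ (and avoiding $B_{\delta'}(0)$). Along $\gamma$ the source $\mathrm{I}_N^{(\nu)}$ is $O(e^{-2\epsilon N})$. I would view the ODE \eqref{ODE_BTS} as a first-order perturbation problem (its $\partial_\zeta^2$ and zeroth-order coefficients are both $O(1/N)$) and integrate along $\gamma$ starting from the initial value $\boldsymbol{S}_N^{(\nu)}(\bar w,w) = N/(2(1-\tau^2)) + O(e^{-\epsilon N})$ provided by Proposition~\ref{Lem_OPDens}. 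A Gronwall-type bootstrap, absorbing the term $(1/N)\cdot\boldsymbol{S}_N^{(\nu)}$ into a relative correction, then yields
\[
\boldsymbol{S}_N^{(\nu)}(z,w) = \tfrac{N}{2(1-\tau^2)}\bigl(1+O(1/N)\bigr)
\]
uniformly in $w\in V_S$, $z\in B_\delta(w)$, and substituting into \eqref{WAB1} produces \eqref{KN asymptotic 1}. For the regime of \eqref{KN asymptotic 2}, when $w\not\in U_S$ the starting value $\boldsymbol{S}_N^{(\nu)}(\bar w,w)$ is itself $O(e^{-\epsilon N})$ by Proposition~\ref{Lem_OPDens}, while when $w\in V_S$ but $z\not\in B_\delta(w)$, the Gaussian prefactor $e^{-N|\sqrt z-\sqrt{\bar w}|^2/(1-\tau^2)}$ in \eqref{WAB1} supplies the required exponential decay.

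The main obstacle is the two-variable extension of Lemma~\ref{Lem_OPOV}, namely verifying the uniform strict positivity of $\Phi(\zeta,w)$ along the chosen contour $\gamma$. The natural route is to parametrize $\zeta=\phi(u)$ and $w=\phi(v)$ via the Joukowsky map \eqref{ConformalMap1} and to perform a calculus analysis analogous to \eqref{W Omega v2}--\eqref{W Omega R}, using the explicit expression \eqref{Polar g tau} for $g_\tau(\phi(\cdot))$; once positivity is established on a ball of fixed radius about the antidiagonal set $\{u\bar v=1\}$, the remainder is a routine application of the ODE machinery set up in the proof of Proposition~\ref{Lem_OPDens}.
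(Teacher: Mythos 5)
Your proposal follows the same overall strategy as the paper: bound the inhomogeneous term $\mathrm{I}_N^{(\nu)}$ using Lemma~\ref{Lem_BDLaguerre}, propagate the diagonal value from Proposition~\ref{Lem_OPDens} along a short contour through the differential equation \eqref{ODE_BTS}, and convert back to the kernel via \eqref{WAB1}. You are also right that absorbing the $O(1/N)$-sized second- and zeroth-order coefficients of the ODE into a relative $O(1/N)$ correction for $\boldsymbol{S}_N^{(\nu)}$ is all that \eqref{KN asymptotic 1} requires, since \eqref{WAB1} already carries a relative $O(1/N)$ error from the Bessel asymptotics of the weight.

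The only genuine point of divergence is how you control positivity of the two-variable exponent $\Phi$. You correctly observe that $\Phi(\bar w,w)=\Omega(w)$, then invoke continuity and compactness of the tube $\{(\zeta,w):w\in V_S,\ \zeta\in B_\delta(w)\}$ to get a uniform lower bound, but then you undercut yourself by calling this "the main obstacle" and sketching a two-variable calculus as if a further argument were still needed. The compactness argument you already gave is adequate. The paper instead sidesteps compactness entirely with an elementary algebraic identity: since $2\Re\sqrt{\xi\overline w}=|\xi|+|w|-|\sqrt\xi-\sqrt w|^2$ and $\Re g_\tau(\overline w)=\Re g_\tau(w)$, one has exactly
\begin{equation*}
\Phi(\xi,w)=\tfrac12\Omega(\xi)+\tfrac12\Omega(w)-\tfrac{1}{1-\tau^2}\,|\sqrt\xi-\sqrt w|^2\ \geq\ \tfrac12\Omega(\xi)+\tfrac12\Omega(w)-\tfrac{1}{1-\tau^2}\,|\xi-w|,
\end{equation*}
so that positivity is an immediate corollary of the single-variable Lemma~\ref{Lem_OPOV} together with $|\xi-w|<\delta<(1-\tau^2)\epsilon$. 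This is cleaner than both of your suggestions, it is quantitative, and it simultaneously explains the off-ball regime of \eqref{KN asymptotic 2}: the factor $e^{-N|\sqrt\xi-\sqrt w|^2/(1-\tau^2)}$ is precisely the Gaussian prefactor in \eqref{WAB1}, so the decay mechanism for $w\in V_S$, $z\notin B_\delta(w)$ is the $\Omega$-plus-Gaussian cancellation that the identity makes manifest. Your final paragraph for that case only gestures at the Gaussian factor; the reduction above is what actually closes the argument.
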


We mention that the Szeg\H o type asymptotic behaviours of the correlation kernel when $z$ and $w$ are sufficiently away from each others have also been studied for various random normal matrix models \cite{AC23,BY23,ADM23,FJ96}.

\begin{proof}[Proof of Proposition~\ref{Prop_Bulk}]
We write
\[
\boldsymbol{S}_N^{(\nu)}(z,\overline{w})
=
\boldsymbol{S}_N^{(\nu)}(w,\overline{w})+\int_{w}^z\partial_{\xi}\boldsymbol{S}_N^{(\nu)}(\xi,\overline{w}) \, d\xi. 
\]
For a sufficiently large $N$, by \eqref{OmegaF} and Lemma~\ref{Lem_BDLaguerre}, we have 
\begin{equation}
\label{EST2}
\bigl|\mathrm{I}_N^{(\nu)}(\xi,\overline{w})\bigr|
\leq  
C \exp\Big( -\frac{N}{2}\bigl(\Omega(\xi)+\Omega(w)-\frac{2}{1-\tau^2}|\xi-w| \bigr) \Big),
\end{equation}
for some $C>0$. 
We take $\epsilon>0$ such that $0<2\epsilon<\min_{w\in V_{S}}\Omega(w)$ and $\xi=w\in V_{S}$. 
Then there exists a small $\delta$ such that when $|\xi-w|<\delta<(1-\tau^2)\epsilon$, 
we have 
$$\Omega(\xi)+\Omega(w)-\frac{2}{1-\tau^2}|\xi-w|>2\epsilon>0,\qquad w \in V_S.$$ 
This implies that $|\mathrm{I}_N^{(\nu)}(\xi,\overline{w})|\leq O(e^{-\epsilon N})$ uniformly over $w\in V_{S}$. 
Therefore, by \eqref{WAB1} and \eqref{ODE_BTS}, we have
\[
\boldsymbol{S}_N^{(\nu)}(z,\overline{w})=\frac{N}{2(1-\tau^2)}+O(e^{-\epsilon N}).
\]
Then, by combining with \eqref{WAB1}, we obtain \eqref{KN asymptotic 1}. 

For $z\notin B_{\delta}(w)$, by the same estimate \eqref{EST2}, for any $\xi\in\C$, we have 
\[
\bigl|\partial_{\xi}\boldsymbol{S}_N^{(\nu)}(\xi,\overline{w}) \bigr|e^{-\frac{N}{1-\tau^2}|\sqrt{\xi}-\sqrt{w}|^2}
=
O(e^{-\epsilon N} e^{-\frac{N}{2}\Omega(\xi)}).
\]
By integrating $\partial_{\xi}S_N^{(\nu)}(\xi,\overline{w})$ along the straight line from $w$ to $z$, it follows from Proposition~\ref{Lem_OPDens} that for $w\in V_{S}$ and $z\notin B_{\delta}(w)$,  
\[
\boldsymbol{S}_N^{(\nu)}(z,\overline{w})e^{-\frac{N}{1-\tau^2}|\sqrt{z}-\sqrt{w}|^2}
=
\boldsymbol{S}_N^{(\nu)}(w,\overline{w})e^{-\frac{N}{1-\tau^2}|\sqrt{z}-\sqrt{w}|^2}+O(e^{-\epsilon N})
=
\frac{Ne^{-\frac{N}{1-\tau^2}|\sqrt{z}-\sqrt{w}|^2}}{2(1-\tau^2)}+O(e^{-\epsilon N}). 
\]
Since $|z-w|>\delta>(1-\tau^2)\epsilon$ and $|\sqrt{z}-\sqrt{w}|^2\leq |z-w|$, we have $\bfK_N^{ \rm c }(z,\overline{w})=O(e^{-\epsilon N})$. 
Note here that $z$ can be in the unbounded subset of $\C$, but the error bound is uniform due the linear growth of $\Omega(\xi)$ as $|\xi|\to\infty$. 
This proves the second case. 
When $w\notin U_{S}$, by taking $\epsilon>0$ such that $0<2\epsilon <\min_{w\notin U_{S}}\Omega(w)$, it follows from the same strategy with the second case. 
\end{proof}

We are now ready to complete the proof of \eqref{GinUE bulk kernel} in Theorem~\ref{Thm_StrongNonHermiticity}. 
\begin{proof}[Proof of \eqref{GinUE bulk kernel} in Theorem~\ref{Thm_StrongNonHermiticity}]

We write 
\begin{equation} 
\label{LightNotationZ}
z \equiv z(\zeta):=  p + \mathbf{n}(p)\frac{\zeta}{\sqrt{N\delta}} , \qquad 
w \equiv w(\eta):=  p + \mathbf{n}(p)\frac{\eta}{\sqrt{N\delta}},
\end{equation}
for the local coordinates, see \eqref{def of delta Lap Q}, \eqref{RNk rescaled complex}, and \eqref{RNk rescaled symplectic}.
Note that $\mathbf{n}(p)=1$ for the bulk case. 
To lighten notations, we often write $z$ and $w$ instead of $z(\zeta)$ and $w(\eta)$.
By the Taylor expansion, we have 
\[
-\frac{N}{1-\tau^2}|\sqrt{z}-\sqrt{w}|^2=-\frac{1}{2}|\zeta-\eta|^2+O\Bigl(\frac{1}{\sqrt{N}}\Bigr). 
\]
Then, by Proposition~\ref{Prop_Bulk}, there exist $\epsilon>0$ and a cocycle factor $c_N(\zeta,\eta)$ such that
\[
\frac{1}{N\delta}\bfK_N^{ \rm c }(z,\overline{w})=c_N(\zeta,\eta) e^{-\frac{1}{2}(|\zeta|+|\eta|^2)+\zeta\overline{\eta}}+O(e^{-\epsilon N}),\quad\text{as $N\to\infty$},
\]
uniformly for $\zeta,\eta$ in compact subsets of $\C$. 
Then by Proposition~\ref{Prop_Det Pfa struc} (i) and \eqref{RNk rescaled complex}, the proof of \eqref{GinUE bulk kernel} in Theorem~\ref{Thm_StrongNonHermiticity} is now complete. 
\end{proof}

\subsection{Proof of Theorem~\ref{Thm_StrongNonHermiticity} (ii), the bulk case} \label{Subsec_Thm strong (ii) bulk}

In this subsection, we shall prove \eqref{GinSE bulk kernel} in Theorem~\ref{Thm_StrongNonHermiticity}. 
Recall that $\bfkappa_N$ is given by \eqref{def of KN s}. 
For this purpose, we use the transformation  
\begin{equation}
\label{WKA1}  
\boldsymbol{\kappa}_N(z,w) 
:= \sqrt{\frac{\pi}{4AN}}(zw)^{\nu+\frac{1}{4}}\exp\Bigl( -\frac{2N}{1-\tau^2}(2\sqrt{zw}-\tau(z+w)) \Bigr)
\bfkappa_N(z,w). 
\end{equation}
Then, by Theorem~\ref{Thm_ODE} (ii), we have
\begin{align}
\begin{split}
\label{WKA2}
&\quad
\bigg[
\frac{1-\tau^2}{4N}\sqrt{\frac{z}{w}}\partial_z^2+\Bigl(1+\frac{1-\tau^2}{8N\sqrt{zw}}\Bigr)\partial_z
-\Bigl( \frac{N(z-w)}{\sqrt{zw}(1-\tau^2)}+\frac{(16\nu^2-1)(1-\tau^2)}{2z\sqrt{zw}}\Bigr)
\bigg]
\boldsymbol{\kappa}_N(z,w)
\\
&=\frac{N}{2\sqrt{zw}(1-\tau^2)}\boldsymbol{S}_{2N}^{(2\nu)}(z,w)
-
\frac{1}{2(zw)^{3/4}} \exp\Bigl( \frac{2N}{1-\tau^2}(\sqrt{z^2}+\sqrt{w^2}-2\sqrt{zw}) \Bigr) \boldsymbol{E}_N(z,w),
\end{split}
\end{align}
where $\boldsymbol{S}_N^{(\nu)}$ is given by \eqref{BTSM} and 
\begin{align}
\begin{split}
\label{WEN}
\boldsymbol{E}_N(z,w)
&:=
\sqrt{\frac{\pi}{4AN}}(zw)^{\nu+\frac{1}{2}}
\exp\Bigl( -\frac{2N}{1-\tau^2}\bigl(\sqrt{z^2}+\sqrt{w^2}-\tau (z + w)\bigr) \Bigr)
\\
&\quad \times 
\frac{\sqrt{\pi}(2N)^{2\nu+3}(2N)!!\tau^{2N}}{2^{N+2\nu}(1-\tau^2)^2\Gamma(N+\nu+1/2)}L_{2N}^{(2\nu)}\Bigl(\frac{2N}{\tau}z \Bigr)
g_N(w).
\end{split}
\end{align}
Here, $g_N$ is given by \eqref{def of gN}. 
On the other hand, by Proposition~\ref{Prop_RemainderODE}, we have
\begin{equation}
\label{ODE_WEN}
\bigg[
\frac{1-\tau^2}{4N}\frac{w}{\sqrt{w^2}}\partial_w^2 + \partial_w -\frac{(1-\tau^2)(4\nu^2-1)}{16N w\sqrt{w^2}} 
\bigg]
\boldsymbol{E}_N(z,w)
=
\mathrm{II}_{N}^{(\nu)}(z,w),
\end{equation}
where  
\begin{align}
\begin{split}
\label{cTN}
\mathrm{II}_{N}^{(\nu)}(z,w)
&:=
\sqrt{\frac{\pi}{4AN}}
\frac{(zw)^{\nu+1/2}}{2\sqrt{w^2}} \frac{(2N)^{2\nu+3}(2N)!\tau^{4N-1}}{(1-\tau^2)^2\Gamma(2N+2\nu)}
\\
&\quad \times
\exp\Bigl( -\frac{2N}{1-\tau^2}(\sqrt{z^2}-\tau z) \Bigr)
L_{2N}^{(2\nu)}\Bigl(\frac{2N}{\tau}z \Bigr)
\exp\Bigl( -\frac{2N}{1-\tau^2}(\sqrt{w^2}-\tau w) \Bigr)
L_{2N-1}^{(2\nu)}\Bigl(\frac{2N}{\tau}w \Bigr).
\end{split}
\end{align}


\begin{lem}\label{Lem_BulkSymp}
Let $p\in(e_-,e_+)\backslash\{0\}$. 
Then there exist a neighbourhood of $U\subset\mathrm{int}(S)$ of $p$ and 
an $\epsilon>0$ such that the following estimate holds for $\xi,w\in U$: 
\[
\boldsymbol{S}_{2N}^{(2\nu)}(\xi,w)
=\frac{N}{1-\tau^2}+O(e^{-\epsilon N}),\qquad
\boldsymbol{E}_N(z,w)=O(e^{-\epsilon N}),\quad \text{as $N\to\infty$}. 
\]

\end{lem}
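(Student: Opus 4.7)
The plan is to handle the two assertions separately, both ultimately resting on the strict positivity of $\Omega$ in the interior of $S$ (Lemma~\ref{Lem_OPOV}) together with the Laguerre bound of Lemma~\ref{Lem_BDLaguerre}.

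For the first assertion, I would bootstrap from the diagonal statement of Proposition~\ref{Lem_OPDens}. Since $p\in(e_{-},e_{+})\setminus\{0\}$ is real, one can shrink $U$ to be stable under complex conjugation, in which case Proposition~\ref{Lem_OPDens} (with $N,\nu$ replaced by $2N,2\nu$) evaluated at $\xi=\overline{w}$ yields $\boldsymbol{S}_{2N}^{(2\nu)}(\overline{w},w)=N/(1-\tau^{2})+O(e^{-\epsilon N})$ for $w\in U$. To transport this to an arbitrary $\xi\in U$, I would integrate $\partial_{\xi'}\boldsymbol{S}_{2N}^{(2\nu)}(\xi',w)$ along a short path from $\overline{w}$ to $\xi$, using the ODE \eqref{ODE_BTS} with $m=2$ to express the derivative as the inhomogeneous term $\mathrm{I}_{2N}^{(2\nu)}(\xi',w)$ plus lower-order contributions coming from the second derivative and zero-th order coefficients. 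Lemma~\ref{Lem_BDLaguerre} then yields the bound \eqref{EST2} (now with $m=2$); for $U$ small enough, the gain from the positivity of $\Omega$ on $U$ dominates the linear loss in $|\xi'-w|/(1-\tau^{2})$, and the argument closes exactly as in Proposition~\ref{Prop_Bulk}.

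For the second assertion, I would exploit the factorised structure visible in \eqref{WEN}, namely
\[
\boldsymbol{E}_N(z,w)=K_N\,(zw)^{\nu+1/2}\,\Phi_N(z)\,\Psi_N(w),
\]
where $\Phi_N(z):=e^{-\frac{2N}{1-\tau^{2}}(\sqrt{z^{2}}-\tau z)}L_{2N}^{(2\nu)}(2Nz/\tau)$, $\Psi_N(w):=e^{-\frac{2N}{1-\tau^{2}}(\sqrt{w^{2}}-\tau w)}g_N(w)$, and $K_N=O(N^{\nu+3}\tau^{2N})$ by Stirling. For the $z$-factor, Lemma~\ref{Lem_BDLaguerre} together with the identity $\tfrac{2}{1-\tau^{2}}(\sqrt{z^{2}}-\tau z)=\Omega(z)+2\,g_{\tau}(z)$ (which holds for real $z$ of the same sign as $p$ and extends holomorphically to a neighbourhood of $p$) gives $|K_N\Phi_N(z)|\lesssim N^{\nu+5/2}\,e^{-N\Omega(z)}$. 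For the $w$-factor I would use Proposition~\ref{Prop_RemainderODE}: after conjugation by the same integrating factor, $\Psi_N$ satisfies a second-order inhomogeneous ODE whose right-hand side, again by Lemma~\ref{Lem_BDLaguerre}, is dominated by $N^{\mathrm{poly}}\,e^{-N\Omega(w)}$. Anchoring $\Psi_N$ at the real reference point $w=p$, where $g_N(p)$ can be controlled directly through the dominant $j=N-1$ summand in \eqref{def of gN}, and then integrating yields $|\Psi_N(w)|\lesssim N^{\mathrm{poly}}\,e^{-N\Omega(w)}$. Multiplying the two factors and using $\Omega\geq\epsilon_{2}>0$ on $U$ produces $|\boldsymbol{E}_N(z,w)|=O(e^{-\epsilon N})$ as claimed.

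The main obstacle is the quantitative control of $\Psi_N$. The function $g_N$ is essentially the scaled skew-orthogonal polynomial $q_{2N-2}$, and the task is to show that the exponential integrating factor $e^{-\frac{2N}{1-\tau^{2}}(\sqrt{w^{2}}-\tau w)}$ compensates its leading bulk growth up to only a polynomial-in-$N$ loss. Passing through Proposition~\ref{Prop_RemainderODE} requires not only the inhomogeneous bound above but also ruling out rapidly growing homogeneous contributions---that is, identifying $\Psi_N$ as the recessive solution, a fact ultimately traceable to the explicit polynomial normalisation of $g_N$ in \eqref{def of gN}. Converting this algebraic input into a sharp quantitative bound is the delicate step of the proof; once achieved, everything else reduces to routine applications of the Laguerre asymptotics already compiled in Section~\ref{Section_preliminaries}.
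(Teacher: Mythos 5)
Your treatment of the first assertion (for $\boldsymbol{S}_{2N}^{(2\nu)}$) is essentially the paper's: bootstrap from the diagonal estimate of Proposition~\ref{Lem_OPDens} and transport it to off-diagonal points by integrating $\partial_\xi \boldsymbol{S}_{2N}^{(2\nu)}$ controlled via \eqref{ODE_BTS} and Lemma~\ref{Lem_BDLaguerre}, exactly as in the proof of Proposition~\ref{Prop_Bulk}. That part is fine.

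For the second assertion you go a genuinely different, and substantially harder, route. You propose factoring $\boldsymbol{E}_N$ into a $z$-factor (a single Laguerre polynomial, easy) and a $w$-factor $\Psi_N$ built from $g_N$, and then controlling $\Psi_N$ by anchoring at the real point $w=p$ and tracking how the recessive behaviour of $\Psi_N$ propagates through the second-order ODE of Proposition~\ref{Prop_RemainderODE}. You correctly flag that identifying $\Psi_N$ as the recessive solution and making that quantitative is the hard part — and indeed, on that route, bounding $g_N(p)$ ``through the dominant $j=N-1$ summand'' is not clearly true and would require separate justification, since $g_N$ is a sum of $N$ Laguerre polynomials of \emph{varying} degree $2j$, most of which sit outside the Plancherel--Rotach window where Lemma~\ref{Lem_BDLaguerre} applies.

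The paper avoids all of this with a much cleaner anchor: $w=0$ instead of $w=p$. By inspection of \eqref{WEN}, $\boldsymbol{E}_N(z,w)$ carries an overall factor $(zw)^{\nu+1/2}$ with $\nu>-1/2$, so $\boldsymbol{E}_N(\xi,0)=0$ exactly — no asymptotics, no control of $g_N$ directly, and no recessive/dominant dichotomy to resolve. The rest is then a one-line integration: the inhomogeneous term $\mathrm{II}_N^{(\nu)}$ in \eqref{ODE_WEN} involves only $L_{2N}^{(2\nu)}$ and $L_{2N-1}^{(2\nu)}$ at the top of the Plancherel--Rotach scale, so Lemma~\ref{Lem_BDLaguerre} makes it $O(e^{-\epsilon N})$, and the first-order structure of \eqref{ODE_WEN} (the $\partial_w^2$ and zeroth-order terms are $O(1/N)$-suppressed) gives $\partial_w\boldsymbol{E}_N=O(e^{-\epsilon N})$, whence $\boldsymbol{E}_N(\xi,w)=\int_0^w\partial_t\boldsymbol{E}_N(\xi,t)\,dt=O(e^{-\epsilon N})$. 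The ``delicate step'' you identify is a genuine obstacle on your path; the paper sidesteps it entirely, and you should adopt the $w=0$ initial condition rather than try to close the gap you flagged.
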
 
\begin{proof}
The proof for $\boldsymbol{S}_{2N}^{(2\nu)}$ is same as that of Proposition~\ref{Prop_Bulk} and we skip the details.
For $p\in\mathrm{int}(S)\backslash B_{\delta'}(0)$ with a small $\delta'>0$, by Lemma~\ref{Lem_OPOV}, there exists a compact subset $V_{S}\subset\mathrm{int}(S)\backslash B_{\delta'}(0)$ that contains an open neighbourhood of $p$ and $\Omega(z)>\epsilon$ for all $z\in V_{S}$ with some $\epsilon>0$. 
We take $$U_1\subset V_{S}\cap\{z\in\C\backslash B_{\delta'}(0):|p-z|<(1-\tau^2)\epsilon/8\}.$$ 
Since $\mathrm{II}_N^{(\nu)}(\xi,w)=O(e^{-\epsilon N})$ for $\xi,t\in U_1$, it follows from \eqref{ODE_WEN} and \eqref{cTN} that 
\[
\partial_t\boldsymbol{E}_N(\xi,t)=O(e^{-\epsilon N}),\quad \text{for $\xi,t\in U_1$}. 
\] 
Note that by \eqref{WEN} and the assumption $\nu>-1/2$, we have $\boldsymbol{E}_N(\xi,0)=0$ for $\xi\in U_1$. 
Thus it follows that for $\xi,w\in U_1$,
\[
\boldsymbol{E}_N(\xi,w)
=\boldsymbol{E}_N(\xi,0)
+
\int_0^w\partial_t\boldsymbol{E}_N(\xi,t)dt=O(e^{-\epsilon N}). 
\]
This completes the proof. 
\end{proof}

We are ready to complete the proof of \eqref{GinSE bulk kernel} in Theorem~\ref{Thm_StrongNonHermiticity}.

\begin{proof}[Proof of \eqref{GinSE bulk kernel} in Theorem~\ref{Thm_StrongNonHermiticity}] 
We set 
\begin{equation}
\kappa_N^{(\rm b)}(\zeta,\eta):=\frac{1}{|p|(N\delta)^{3/2}}\boldsymbol{\kappa}_N(z,w),
\end{equation}
where we have used the local coordinates \eqref{LightNotationZ}. 
By  Proposition~\ref{Prop_Det Pfa struc} (ii), \eqref{WKA1}, \eqref{RNk rescaled symplectic}, together with the asymptotic behaviour of the weight function \eqref{WAB1}, it suffices to show that $\kappa_N^{ (\rm b) } \to \kappa_{\rm b}$ as $N \to \infty$, uniformly on compact subsets of $\C$. 

By \eqref{WKA2} and Lemma~\ref{Lem_BulkSymp}, there exists a positive constant $\epsilon>0$ such that
\begin{equation}
\partial_{\zeta}\kappa_N^{(\rm b)}(\zeta,\eta)
=2(\zeta-\eta)\kappa_N^{(\rm b)}(\zeta,\eta)
+2+O(e^{-\epsilon N}), 
\end{equation}
uniformly for $\zeta,\eta$ in compact subsets of $\C$. 
By taking $N \to \infty,$  we arrive at the differential equation for the limiting pre-kernel that has been utilized in \cite{Kan02,BE23}, see also \cite[Propositoin 2.2 (a)]{BF23a}. To be more precise, as stated in \cite[Theorem 1.2 and Proposition 3.1]{BE23}, we can employ the initial condition $\kappa_N^{(\rm b)}(\zeta,\zeta)=0$ to solve the above differential equation, leading to \eqref{GinSE edge kernel}. This completes the proof.
\end{proof}

\subsection{Proof of Theorem~\ref{Thm_StrongNonHermiticity} (i), the edge case} \label{Subsec_Thm strong (i) edge}

As before, we need proper transformations but for the edge scaling limits, we use slightly different conventions. 
We first define 
\begin{align}
\begin{split}
\label{RescaleKernelS}
\widehat{S}_{N}^{(\nu)}(\zeta,\eta)
:= \sqrt{\frac{\pi}{2AN}}
\frac{1}{N\delta} (zw)^{\frac{\nu}{2}-\frac{1}{4}}
\exp\Bigl(-N\frac{2\sqrt{zw}-\tau(z+w)}{1-\tau^2}\Bigr)S_{N}^{(\nu)}(z,w),
\end{split}
\end{align}
where $S_N \equiv S_{N}^{(\nu)}$ is given by \eqref{def of SN} and $\delta$ is given by \eqref{def of delta Lap Q}. 
Then by Theorem~\ref{Thm_ODE} (i), we have
\begin{equation} 
\label{RescaledODES}
\widehat{\mathfrak{D}}_{N,m}[\zeta] \, \widehat{S}_{mN}^{(m\nu)}(\zeta,\eta)
=\widehat{\mathrm{I}}_{mN}^{(m\nu)}(\zeta,\eta),
\end{equation}
where 
\begin{equation}
\widehat{\mathfrak{D}}_{N,m}[\zeta] =  \frac{1-\tau^2}{2mN}\sqrt{\frac{z}{w}}\frac{\sqrt{N\delta}}{\mathbf{n}(p)} \partial_\zeta^2
+
\Bigl(1+\frac{3(1-\tau^2)}{4mN\sqrt{zw}} \Bigr) \partial_\zeta +
\frac{\mathbf{n}(p)}{2z\sqrt{N\delta}}
\Bigl(1-\frac{(4(m\nu)^2-1)(1-\tau^2)}{16mN\sqrt{zw}}\Bigr)
\end{equation}
and 
\begin{align}
    \begin{split}
    \label{WhatIN}
       \widehat{\mathrm{I}}_{mN}^{(m\nu)}(\zeta,\eta)
       & :=\sqrt{\frac{\pi}{2AmN}}
\frac{(mN)^{m\nu+2}\tau^{2mN-1} }{(1-\tau^2)^2(N\delta)^{3/2}} \frac{(mN)!}{\Gamma(mN+m\nu)}
\frac{(zw)^{\frac{m\nu}{2}+\frac{1}{4}}}{zw}
\exp\Bigl(-mN\frac{2\sqrt{zw}-\tau(z+w)}{1-\tau^2}\Bigr)
\\
&\quad\times
\mathbf{n}(p)
\bigg[
L_{mN-1}^{(m\nu)}\Bigl(\frac{mN}{\tau}z\Bigr)L_{mN}^{(m\nu)}\Bigl(\frac{mN}{\tau}w \Bigr)
-
\tau^2 L_{mN}^{(m\nu)}\Bigl(\frac{mN}{\tau}z \Bigr)L_{mN-1}^{(m\nu)}\Bigl(\frac{mN}{\tau}w \Bigr)
\bigg]. 
    \end{split}
\end{align}

\begin{lem}
We define
\begin{equation}
\label{fzw}
f(z,\overline{w})
:=
\frac{2\sqrt{z\overline{w}}-\tau (z+\overline{w})}{1-\tau^2}
- g_{\tau}(z)- g_{\tau}(\overline{w}),
\end{equation}
where $g_\tau$ is given by \eqref{GTZ}. 
Let $z=z(\zeta)$ and $w=w(\eta)$ be the local coordinates given by \eqref{LightNotationZ} for a fixed $p \in \partial S$. 
Then as $N\to\infty$, we have   
\begin{equation}
\label{EFNA}
f(z,\overline{w})
=
\frac{1}{2N}(\zeta+\overline{\eta})^2
+
O\Bigl(\frac{1}{N^{3/2}}\Bigr). 
\end{equation}
\end{lem}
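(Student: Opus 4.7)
The approach is to Taylor expand $f(z, \bar w)$ around $(z, \bar w) = (p, \bar p)$, treating $z$ and $\bar w$ as independent complex variables, and then convert the leading quadratic term back to the local coordinate $(\zeta, \bar\eta)$.

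First observe that the constant and linear terms of this expansion vanish. By comparison with \eqref{OmegaF} one has $f(z, \bar z) = \Omega(z)$, so the constant $f(p, \bar p) = \Omega(p)$ vanishes on $\partial S$ by Lemma~\ref{Lem_OPOV}. Moreover, since $\Omega \ge 0$ with equality exactly on $\partial S$ (and $\partial S$ is disjoint from the singular set $[0, 4\tau]$, where $\Omega$ is smooth), both $\partial_z \Omega$ and $\partial_{\bar z} \Omega$ vanish there, which immediately translates into $\partial_z f(p, \bar p) = \partial_{\bar w} f(p, \bar p) = 0$.

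The second step is to compute the quadratic part. Parametrise $p = \phi(u_0)$ with $u_0 \in \partial \D$ and introduce conformal coordinates $\alpha, \beta$ via $\psi(z) = u_0 e^\alpha$ and $\psi(\bar w) = e^\beta/u_0$. Using the factorisation $u \phi(u) = (u+\tau)^2$, which yields $\sqrt{\phi(u)\phi(v)} = (u+\tau)(v+\tau)/\sqrt{uv}$, together with \eqref{Polar g tau}, one obtains an explicit expression for $(1-\tau^2) f$ as a combination of exponentials in $\alpha, \beta$ parametrised by $u_0$. A term-by-term Taylor expansion shows the coefficients of $\alpha^2$, $\alpha\beta$, $\beta^2$ in $(1-\tau^2) f$ are $\tfrac{1}{4}|u_0 - \tau|^2$, $\tfrac{1}{2}|u_0 - \tau|^2$, $\tfrac{1}{4}|u_0 - \tau|^2$ respectively, which assembles into a perfect square
\[
(1-\tau^2)\, f \;=\; \tfrac{1}{4} |u_0 - \tau|^2\, (\alpha + \beta)^2 + O\bigl((|\alpha|+|\beta|)^3\bigr).
\]

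Finally, revert to $(\zeta, \bar\eta)$. By the chain rule and $u \phi'(u) = u - \tau^2/u$ one has $z - p = c\,\alpha + O(\alpha^2)$ with $c := u_0 - \tau^2/u_0$, and symmetrically $\bar w - \bar p = \bar c\,\beta + O(\beta^2)$. Geometrically, $c = u_0 \phi'(u_0)$ is the image under the conformal map $\phi$ of the outward radial direction from $\partial \D$ at $u_0$, hence points in the outward normal direction at $p$; thus $\mathbf{n}(p) = c/|c|$, and consequently $\alpha + \beta = (\zeta + \bar\eta)/(|c|\sqrt{N\delta}) + O(1/N)$, the key feature being that $\mathbf{n}(p)/c = \overline{\mathbf{n}(p)}/\bar c = 1/|c|$ is real. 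Substituting gives \eqref{EFNA} provided $|c|^2(1-\tau^2)\delta = \tfrac{1}{2}|u_0 - \tau|^2$, and this follows from $|c|^2 = |u_0 - \tau|^2 |u_0 + \tau|^2$, $|p| = |u_0 + \tau|^2$, and $\delta = 1/(2(1-\tau^2)|p|)$. The main computational hurdle is the bookkeeping in the quadratic Taylor expansion; the conceptual point is the perfect-square structure $(\alpha+\beta)^2$, which combined with $\mathbf{n}(p)$ pointing along $c$ is precisely what produces the real combination $\zeta + \bar\eta$.
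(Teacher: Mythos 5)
Your proposal is correct, and it takes a genuinely different computational route from the paper's. The paper works directly in the $(z,\overline{w})$ coordinates and computes the Taylor coefficients $\partial_z f(p,\overline{p})$, $\partial_z^2 f(p,\overline{p})$, $\partial_z\partial_{\overline{w}}f(p,\overline{p})$ one by one, each time leaning on the Schwarz-function machinery: the identity $g_\tau'=C$ (Cauchy transform), the boundary relation $\overline{p}=\mathcal{S}(p)$, the fact $\mathcal{S}'(p)=-1/\mathbf{n}(p)^2$ (which is what makes the $\zeta^2$ and $\overline{\eta}^2$ prefactors come out as $\mathbf{n}(p)^{\mp 2}$), and a formula for $g_\tau''$ obtained by differentiating the square root of \eqref{Schwarz2}. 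You instead pull everything back to the disc through $\psi$, exploit the factorisation $u\phi(u)=(u+\tau)^2$ and the particularly clean form $g_\tau(\phi(u))=\log u+\tau/u+1$ to get $(1-\tau^2)f$ as a closed combination of exponentials in the conformal coordinates $\alpha,\beta$, and read off the perfect square $\tfrac14|u_0-\tau|^2(\alpha+\beta)^2$ by direct expansion (I checked the three quadratic coefficients and the final normalisation $|c|^2(1-\tau^2)\delta=\tfrac12|u_0-\tau|^2$ — all correct). You also replace the paper's explicit derivative computation for the linear terms with a soft argument: $\Omega\ge 0$ with equality exactly on the smooth curve $\partial S$ (disjoint from the branch cut $[0,4\tau]$) forces $\nabla\Omega=0$ there, which is $\partial_z f(p,\overline{p})=\partial_{\overline{w}} f(p,\overline{p})=0$. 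Each route has its merits: yours is more elementary and makes the $(\zeta+\overline{\eta})^2$ structure fall out algebraically, but relies on coincidences specific to the ellipse, while the paper's Schwarz-function argument is the one that would transfer to more general droplets.
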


\begin{proof}
Under the local coordinates given by \eqref{LightNotationZ} and by Taylor expansion, we have 
\begin{align}
\begin{split}
\label{Taylor fzw}
f(z,\overline{w})
&=
f(p,\overline{p})
+
\frac{1}{\sqrt{N\delta}}
\Bigl(
\mathbf{n}(p)\partial_{z}f(p,\overline{p})\zeta
+
\overline{\mathbf{n}(p)}\partial_{\overline{w}}f(p,\overline{p})\overline{\eta}\Bigr)
\\
&\quad
+\frac{1}{2N\delta}
\Bigl(
\mathbf{n}(p)^2\partial_{z}^2f(p,\overline{p})\zeta^2
+
\overline{\mathbf{n}(p)}^2\partial_{\overline{w}}^2f(p,\overline{p})\overline{\eta}^2
\Bigr)
+\frac{|\mathbf{n}(p)|^2}{N\delta}\partial_{z}\partial_{\overline{w}}f(p,\overline{p})\zeta\overline{\eta}
+O\Bigl(\frac{1}{N^{3/2}}\Bigr).
\end{split}
\end{align}
By Lemma~\ref{Lem_OPOV}, for a given $p\in\partial S$, we have     
\[
f(p,\overline{p})=2\re\Bigl[\frac{\sqrt{p\overline{p}}-\tau p}{1-\tau^2}- g_{\tau}(p)\Bigr]=\Omega(p)=0. 
\]
Note that by \eqref{Schwarz1} and \eqref{Schwarz2}, we have $g_{\tau}'(z)=C(z)$. 
Furthermore, since $\overline{p}=\mathcal{S}(p)$ for $p\in\partial S$, we have 
\[
0
=
A\overline{p}-\sqrt{p\overline{p}}(2g_{\tau}'(p)+B)
=
2\Bigl(\frac{\overline{p}-\tau|p|}{1-\tau^2}-|p| g_{\tau}'(p)\Bigr).
\]
Therefore, we obtain
\begin{equation}
    \label{Derivative f z}
\partial_z f(z,\overline{w})|_{z=p,w=p}
=
\frac{1}{1-\tau^2}\frac{\overline{p}-\tau|p|}{|p|}-g_{\tau}'(p)
=0.
\end{equation}
Next, we compute $\partial_z^2f(z,\overline{w})|_{z=p,w=p}$.
Note that 
the outward unit normal vector can be written in terms of $\psi$ as
\begin{equation} \label{normal vector psi}
\mathbf{n}(p)=\frac{\psi(p)}{\psi'(p)}|\psi'(p)|,
\qquad p\in\partial S,
\end{equation}
see e.g. \cite[Appendix C]{LR16}. 
Notice also that 
\begin{equation}\label{Relationship normal Schwartz}
\mathcal{S}'(p)=-\frac{\psi'(p)}{\overline{\psi'(p)}\psi(p)^2}=-\frac{1}{\mathbf{n}(p)^2},
\qquad p\in\partial S,
\end{equation}
see e.g. \cite[Eq.(30)]{LR16}.
By taking the square root of \eqref{Schwarz2} and differentiating it with respect to $z$, 
we have 
\begin{equation}
\label{Derivative gtau 2}
g_{\tau}''(z)
=
\frac{A}{4} \frac{1}{\sqrt{z}} \frac{\mathcal{S}'(z)}{\sqrt{\mathcal{S}(z)}}
-
\frac{1}{4z} \bigl(2g_{\tau}'(z)+B \bigr).
\end{equation}
Therefore, by \eqref{Derivative f z}, \eqref{Relationship normal Schwartz}, and \eqref{Derivative gtau 2},
we have 
\begin{equation}
\label{Derivative f z2}
\partial_z^2f(z,\overline{w})|_{z=p,w=p}
=
\frac{1}{2(1-\tau^2)}\frac{|p|}{p^2}-g_{\tau}''(p)
=
-\delta\, \mathcal{S}'(p)=\frac{\delta}{\mathbf{n}(p)^2}.
\end{equation}
Finally, note that $|\mathbf{n}(p)|=1$ and 
\begin{equation}
    \label{Derivative f zw}
\partial_z\partial_{\overline{w}}f(z,\overline{w})|_{z=p,w=p}   
=
\frac{1}{2|p|(1-\tau^2)}=\delta.
\end{equation}
By combining \eqref{Taylor fzw} with \eqref{Derivative f z}, \eqref{Derivative f z2}, and \eqref{Derivative f zw}, we obtain \eqref{EFNA}.
This completes the proof. 
\end{proof}

We now complete the proof of \eqref{GinUE edge kernel} in Theorem~\ref{Thm_StrongNonHermiticity}.

\begin{proof}[Proof of \eqref{GinUE edge kernel} in Theorem~\ref{Thm_StrongNonHermiticity}]

Using  Proposition~\ref{Prop_Det Pfa struc} (i), \eqref{RNk rescaled complex} and \eqref{RescaleKernelS}, it suffices to derive the scaling limit of the kernel
\begin{equation}
\label{HatKc}
    K_N^{(\mathrm{c})}(\zeta,\eta)
    := \sqrt{\frac{2AN}{\pi}} 
\frac{\sqrt{\omega_N^{{\rm c}}(z)\omega_N^{{\rm c}}(\overline{w})}}{(z\overline{w})^{\frac{\nu}{2}-\frac{1}{4}}
\exp\bigl(-N\frac{2\sqrt{z\overline{w}}-\tau(z+\overline{w})}{1-\tau^2}\bigr)}
\widehat{S}_N^{(\nu)}(\zeta,\overline{\eta}) . 
\end{equation}
It suffices to compute the asymptotic behaviour of the terms in  \eqref{WhatIN} involving Laguerre polynomials. 
The remaining part follows from straightforward computations using the Taylor expansion and Stirling's formula. 

Recall that $\phi$ is given by \eqref{ConformalMap1}. 
We use the parametrization of the boundary $\gamma_\tau:[0,2\pi)\to \C$: 
\[
 \gamma_\tau(\theta)=\phi(e^{i\theta})=e^{i\theta}+\tau^2e^{-i\theta}+2\tau.
\]
Then it follows from 
$$
\phi'(e^{i\theta})=1-\tau^2e^{-2i\theta}, \qquad \psi(\phi(u))=u, \qquad \psi'(\phi(e^{i\theta}))=1/\phi'(e^{i\theta})=1/(1-\tau^2e^{-2i\theta})
$$
that 
\[
\mathbf{n}(\gamma_\tau(\theta))
=\frac{e^{i\theta}}{\sqrt{(1-\tau^2e^{-2i\theta})(1-\tau^2e^{2i\theta})}}(1-\tau^2e^{-2i\theta})
=\frac{e^{i\theta}-\tau^2e^{-i\theta}}{\sqrt{1-2\tau^2\cos 2\theta+\tau^4}}. 
\]
Therefore, by \eqref{normal vector psi}, we obtain 
\begin{equation}
\label{NPsi_01}
\mathbf{n}(p)\Bigl( \frac{|\psi'(p)|}{\psi(p)}-\tau^2\frac{|\psi'(p)|}{\overline{\psi(p)}} \Bigr)
=
\frac{(e^{i\theta}-\tau^2e^{-i\theta})(e^{-i\theta} -\tau^2e^{i\theta})}{1-2\tau^2\cos 2\theta+\tau^4}
=1. 
\end{equation} 

By using Lemma~\ref{Lem_Strong1}, we obtain
\begin{align*}
&\quad L_{N-1}^{(\nu)}\Bigl(\frac{N}{\tau}z \Bigr)L_{N}^{(\nu)}\Bigl(\frac{N}{\tau}\overline{w} \Bigr)
-
\tau^2 L_{N}^{(\nu)}\Bigl(\frac{N}{\tau}z \Bigr)L_{N-1}^{(\nu)}\Bigl(\frac{N}{\tau}\overline{w} \Bigr)
\\
&=\frac{1}{2\pi N} \frac{(-1)^{2N-1}}{\tau^{2N-1}} 
\frac{e^{N(g_{\tau}(z)+g_{\tau}(\overline{w}))}}{|p|^{\nu}}
\Bigl(\frac{|\psi'(p)|}{\psi(p)}-\tau^2\frac{|\psi'(p)|}{\overline{\psi(p)}}\Bigr)
\Bigl( 1+O\Bigl(\frac{1}{\sqrt{N}}\Bigr)\Bigr)
\\
&=
\frac{1}{\mathbf{n}(p)}
\frac{1}{2\pi N} \frac{(-1)^{2N-1}}{\tau^{2N-1}} 
\frac{e^{N(g_{\tau}(z)+g_{\tau}(\overline{w}))}}{|p|^{\nu}}
\Bigl( 1+O\Bigl(\frac{1}{\sqrt{N}}\Bigr)\Bigr),
\end{align*}
where we used $|\psi(p)|=1$ for $p\in\partial S$ and \eqref{NPsi_01}. 
Then by \eqref{EFNA} and \eqref{RescaledODES}, we obtain 
\[
\partial_{\zeta}\widehat{S}_{N}^{(\nu)}(\zeta,\overline{\eta})
=
-\frac{1}{2}\sqrt{\frac{2}{\pi}}e^{-\frac{1}{2}(\zeta+\overline{\eta})^2}
\Bigl( 1+O\Bigl(\frac{1}{\sqrt{N}}\Bigr)\Bigr). 
\]
By integrating the above with respect to $\zeta$ and using the boundary condition $\widehat{S}_{N}^{(\nu)}(\zeta,\overline{\eta})\to0$ as $\zeta\to\infty$, we have
\begin{equation}
\label{UEcase}
\widehat{S}_{N}^{(\nu)}(\zeta,\overline{\eta})=\frac{1}{2}\erfc\Bigl(\frac{\zeta+\overline{\eta}}{\sqrt{2}}\Bigr)
\Bigl( 1+O\Bigl(\frac{1}{\sqrt{N}}\Bigr)\Bigr). 
\end{equation}
Finally, note that by \eqref{omega N asymp}, we have
\begin{equation}
\label{Asymp_Weight}
\sqrt{\frac{2AN}{\pi}} 
\frac{\sqrt{\omega_N^{{\rm c}}(z)\omega_N^{{\rm c}}(\overline{w})}}{(z\overline{w})^{\frac{\nu}{2}-\frac{1}{4}}
\exp\bigl(-N\frac{2\sqrt{z\overline{w}}-\tau(z+\overline{w})}{1-\tau^2}\bigr)}
=
c_N(\zeta,\eta)
e^{-\frac{1}{2}(|\zeta|^2+|\eta|^2)+\zeta\overline{\eta}}
\Bigl( 1+O\Bigl(\frac{1}{N}\Bigr)\Bigr), 
\end{equation}
where $c_N(\zeta,\eta)$ is a cocycle factor. 
Therefore, one can see that $K_N^{\rm (c)}$ in \eqref{HatKc} converges to $K_{\rm e}$ in \eqref{GinUE edge kernel} as $N\to\infty$, uniformly for $\zeta,\eta$ in compact subsets of $\C$. 
This completes the proof.
\end{proof}


\subsection{Proof of Theorem~\ref{Thm_StrongNonHermiticity} (ii), the edge case} \label{Subsec_Thm strong (ii) edge}


Let us denote 
\begin{align}
\begin{split} 
\label{RescaleKernelK}
\widehat{\kappa}_N^{(\nu)}(\zeta,\eta)
&:= \sqrt{\frac{\pi}{4AN}}
\frac{(zw)^{\nu-\frac{1}{4}}}{(N\delta)^{3/2}}
\exp\Big(-\frac{4N}{1-\tau^2}\sqrt{zw}+\frac{2N\tau}{1-\tau^2}(z+w) \Big) \bfkappa_N(z,w),
\end{split}
\end{align}
where $\bfkappa_N$ is given by \eqref{def of KN s} and $\delta$ is given by \eqref{def of delta Lap Q}. 
By Theorem~\ref{Thm_ODE} (ii), it follows that 
\begin{equation} 
\label{RescaledODEK}
\mathfrak{D}_{N,m}[\zeta] \, \widehat{\kappa}_N^{(\nu)}(\zeta,\eta)= \frac{|p|\mathbf{n}(p)}{\sqrt{zw}}
\Bigl(
\widehat{S}_{2N}^{(2\nu)}(\zeta,\eta)
-
\widehat{T}_{N}(\zeta,\eta)
\Bigr),
\end{equation}
where
\begin{align}
\begin{split} 
\mathfrak{D}_{N,m}[\zeta] & = \frac{1-\tau^2}{4N}\sqrt{\frac{z}{w}}\frac{\sqrt{N\delta}}{\mathbf{n}(p)}\partial_\zeta^2 +
\Bigl(1+\frac{3(1-\tau^2)}{8N\sqrt{zw}} \Bigr)\partial_{\zeta} 
\\
&\quad - \Bigl( 
\frac{2\mathbf{n}(p)^2|p|(\zeta-\eta)}{\sqrt{zw}}-\frac{\mathbf{n}(p)}{2}\sqrt{\frac{1}{N\delta z^2}}+\frac{\mathbf{n}(p)(16\nu^2-1)(1-\tau^2)}{64N\sqrt{N\delta zw}z}
\Bigr).
\end{split}
\end{align}
Here, 
\begin{equation}
\label{RescaleKernelT}
\widehat{T}_N(\zeta,\eta)
:=
\exp
\Bigl(-\frac{2N}{1-\tau^2}(2\sqrt{zw}-\sqrt{z^2}-\sqrt{w^2}) \Bigr)
\widehat{E}_N(\zeta,\eta),
\end{equation}
where 
\begin{align}
\begin{split}
\label{RescaleKernelE}
\widehat{E}_N(\zeta,\eta)
 & :=
 \frac{1}{N\delta} \sqrt{\frac{\pi}{4AN}}
 (zw)^{\nu-\frac{1}{4}}\exp\Bigl(-\frac{2N}{1-\tau^2}(\sqrt{z^2}+\sqrt{w^2}-\tau (z+ w)) \Bigr)
\\
&\quad \times \frac{\sqrt{\pi}(2N)^{2\nu+2}(2N)!! \tau^{2N}}{2^{N+2\nu-1}(1-\tau^2)\Gamma(N+\nu+1/2)}
L_{2N}^{(2\nu)}\Bigl(\frac{2N}{\tau}z\Bigr)
g_N(w). 
\end{split}
\end{align}
Recall that $g_N$ is given by \eqref{def of gN}.
By Proposition~\ref{Prop_RemainderODE}, we have 
\begin{equation}
\label{RescaledODEEN}
\bigg[ \frac{(1-\tau^2)\sqrt{\delta}}{4\sqrt{N}}\partial_\eta^2
 + \Bigl(1+\frac{3(1-\tau^2)}{8Nw} \Bigr)\partial_{\eta}
+
\frac{1}{\sqrt{N\delta}}
\Bigl(
\frac{3}{4w}-\frac{(16\nu^2-1)(1-\tau^2)}{64Nw^2}
\Bigr)
\bigg]\widehat{E}_N(\zeta,\eta) 
= \widehat{\mathrm{II}}_{N}^{(\nu)}(\zeta,\eta),
\end{equation}
where  
\begin{align}
    \begin{split}
    \label{WhatIIN}
        \widehat{\mathrm{II}}_{N}^{(\nu)}(\zeta,\eta)
        & :=
        \sqrt{\frac{\pi}{4AN}}
\frac{(2N)^{2\nu+2}\tau^{4N-1}}{w(1-\tau^2)(N\delta)^{3/2}}
\frac{(2N)!}{\Gamma(2N+2\nu)}
(zw)^{\nu-\frac{1}{4}}
e^{-\frac{2N}{1+\tau}(z+w)}
L_{2N}^{(2\nu)}\Bigl(\frac{2N}{\tau}z \Bigr)L_{2N-1}^{(2\nu)}\Bigl(\frac{2N}{\tau}w \Bigr).
    \end{split}
\end{align}

\begin{lem}\label{Lem_ES2}
As $N\to\infty$, we have
\begin{equation}
\widehat{T}_N(\zeta,\eta)=\frac{e^{(\zeta-\eta)^2}}{\sqrt{2}}e^{-2\zeta^2}\erfc(\sqrt{2}\eta)
\Bigl( 1+O\Bigl(\frac{1}{\sqrt{N}}\Bigr)\Bigr) 
\end{equation}
uniformly for $\zeta,\eta$ in compact subsets of $\C$. 
\end{lem}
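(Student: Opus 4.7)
The plan exploits a key factorisation. By inspection of \eqref{RescaleKernelE}, the function $\widehat E_N(\zeta,\eta)$ splits as the pure product
\[
\widehat E_N(\zeta,\eta)=\frac{C_N}{N\delta}\,\mathcal F_N(\zeta)\,\mathcal G_N(\eta),
\]
with
\[
\mathcal F_N(\zeta):=z^{\nu-\frac14}\,e^{-\frac{2N}{1-\tau^2}(\sqrt{z^2}-\tau z)}\,L_{2N}^{(2\nu)}\!\Bigl(\tfrac{2N}{\tau}z\Bigr), \quad
\mathcal G_N(\eta):=w^{\nu-\frac14}\,e^{-\frac{2N}{1-\tau^2}(\sqrt{w^2}-\tau w)}\,g_N(w),
\]
and $C_N$ collecting the remaining explicit prefactors. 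This reduces the task to two independent one-variable asymptotics. For the exponential cross-factor in \eqref{RescaleKernelT}, I would use the identity $2\sqrt{zw}-\sqrt{z^2}-\sqrt{w^2}=-(\sqrt z-\sqrt w)^2$ valid for $z,w$ near the positive real edge point $p$, together with $\delta=1/[2(1-\tau^2)p]$, to obtain
\[
\exp\!\Bigl(-\tfrac{2N}{1-\tau^2}(2\sqrt{zw}-\sqrt{z^2}-\sqrt{w^2})\Bigr)=e^{(\zeta-\eta)^2}\bigl(1+O(N^{-1/2})\bigr).
\]

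For $\mathcal F_N(\zeta)$, I would apply the strong asymptotic of Lemma~\ref{Lem_Strong1}, valid at $p=e_+=(1+\tau)^2$ since $p>4\tau$ lies outside the zero interval $[0,4\tau]$ of $L_{2N}^{(2\nu)}(2Nz/\tau)$. The combined exponent $-\tfrac{2N}{1-\tau^2}(\sqrt{z^2}-\tau z)+2Ng_\tau(z)$ is then Taylor-expanded at $z=p$: the constant and linear terms cancel by the edge relations $g_\tau(p)=1+\tau$ and $g_\tau'(p)=1/(1+\tau)$, while the quadratic term evaluates to $-2\zeta^2$ via \eqref{Derivative gtau 2} together with $\mathcal S'(p)=-1/\mathbf n(p)^2$ and the algebraic identity $\delta\,g_\tau''(p)=-1$. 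Combined with the conformal-map values $\psi(p)=\pm 1$, $\psi'(p)=1/(1-\tau^2)$, this gives $\mathcal F_N(\zeta)=c_1 N^{-1/2}\tau^{-2N}\,e^{-2\zeta^2}(1+O(N^{-1/2}))$ with an explicit constant $c_1$.

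For $\mathcal G_N(\eta)$, I would conjugate Proposition~\ref{Prop_RemainderODE} by the gauge $v(w):=w^{\nu-1/4}e^{-\frac{2N}{1-\tau^2}(\sqrt{w^2}-\tau w)}$ and pass to the $\eta$-coordinate. A direct computation shows that the coefficient of $\partial_\eta^2$ is the bounded $(1-\tau^2)p\delta/2=1/4$, the coefficient of $\partial_\eta$ is the dominant $2p\sqrt{N\delta}+O(N^{-1/2})$, and the zero-th order coefficient vanishes to leading order through the cancellation $-(1+\tau)+2\tau+(1-\tau)=0$---the algebraic hallmark of the edge condition. The leading balance is therefore the first-order equation $\partial_\eta\mathcal G_N(\eta)=-c_2\,e^{-2\eta^2}(1+O(N^{-1/2}))$, obtained by inserting Lemma~\ref{Lem_Strong1} for the source $L_{2N-1}^{(2\nu)}(2Nw/\tau)$ and repeating the Taylor analysis above. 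Integrating from $+\infty$ with the boundary condition $\mathcal G_N(\eta)\to 0$ as $\eta\to+\infty$---which follows from an analogue of Lemma~\ref{Lem_BDLaguerre} applied termwise to the finite sum defining $g_N$---produces $\mathcal G_N(\eta)=c_3\erfc(\sqrt 2\eta)(1+O(N^{-1/2}))$ via the Gaussian identity $\int_\eta^\infty e^{-2s^2}ds=\tfrac{\sqrt\pi}{2\sqrt 2}\erfc(\sqrt 2\eta)$.

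Assembling these three ingredients yields the claimed asymptotic of $\widehat T_N$ up to a multiplicative constant $C_N c_1 c_3/(N\delta)$. The main obstacle is verifying that this cumulative constant equals exactly $1/\sqrt 2$: one simplifies $(2N)!!=2^N N!$ and $(2N-1)!!=(2N)!/(2^N N!)$, applies the duplication formula \eqref{Gamma duplication} in the form $\Gamma(N+\nu)\Gamma(N+\nu+\tfrac12)=\sqrt\pi\,\Gamma(2N+2\nu)/2^{2N+2\nu-1}$, and invokes the Stirling asymptotic $(2N)!/\Gamma(2N+2\nu)\sim(2N)^{1-2\nu}$; after all the $\tau$-dependent factors $(1+\tau)^2\sqrt{1-\tau^2}/[(1+\tau)^{5/2}\sqrt{1-\tau}]=1$ also collapse, the remaining numerical constants combine to $1/\sqrt 2$. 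The analogous argument handles the left edge $p=e_-$ with only minor modifications to the choice of asymptotic regime for the Laguerre polynomial.
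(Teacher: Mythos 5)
Your proposal is essentially the paper's proof: both hinge on the exponential-regime asymptotics (Lemma~\ref{Lem_Strong1}) together with the differential equation of Proposition~\ref{Prop_RemainderODE}, Taylor-expand the combined exponent at $p=(1+\tau)^2$ to extract the Gaussian factors, and integrate the resulting leading-order first-order $\eta$-equation from $+\infty$ to produce the $\erfc$. Your explicit splitting $\widehat E_N=\tfrac{C_N}{N\delta}\,\mathcal F_N(\zeta)\mathcal G_N(\eta)$ is a presentational repackaging of what the paper does by working with $\widehat E_N$ and its source $\widehat{\mathrm{II}}_N^{(\nu)}$ directly through~\eqref{RescaledODEEN}; the tools and the order in which cancellations occur are the same.

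One stated identity is off, though: to get $-2\zeta^2$ from the quadratic Taylor term of $-\tfrac{2N}{1+\tau}z+2Ng_\tau(z)$ at $z=p+\zeta/\sqrt{N\delta}$ you need $g_\tau''(p)=-2\delta$, equivalently $g_\tau''(p)/\delta=-2$; the identity $\delta\,g_\tau''(p)=-1$ that you quote is dimensionally inconsistent and false (one checks via $g_\tau(\phi(u))=\log u+\tau/u+1$ at $u=1$ that $g_\tau''(e_+)=-1/\bigl[(1+\tau)^3(1-\tau)\bigr]=-2\delta$). This is a slip in the statement rather than in the computation, since your declared output $e^{-2\zeta^2}$ agrees with the paper's display $\exp\bigl(-\tfrac{2N}{1+\tau}(z+w)+2N(g_\tau(z)+g_\tau(w))\bigr)=e^{-2(\zeta^2+\eta^2)}\bigl(1+O(N^{-1})\bigr)$, but it should be corrected.
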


\begin{proof}
By Stirling formula, we have 
\[
\sqrt{\frac{\pi}{4AN}}
\frac{(2N)^{2\nu+2}\tau^{4N-1}}{(1-\tau^2)(N\delta)^{3/2}}
\frac{(2N)!}{\Gamma(2N+2\nu)}
\frac{(zw)^{\nu-\frac{1}{4}}}{w}
=2^3\sqrt{\pi}p^{2\nu}\tau^{4N-1}(1-\tau^2)N\Bigl( 1+O\Bigl(\frac{1}{N}\Bigr)\Bigr).
\]
On the other hand, by using Lemma~\ref{Lem_Strong1}, 
\[
L_{2N}^{(2\nu)}\Bigl(\frac{2N}{\tau}z\Bigr)
L_{2N-1}^{(2\nu)}\Bigl(\frac{2N}{\tau}w \Bigr)
=
-
\frac{1}{N}\frac{1}{2^2\pi} \frac{p^{-2\nu}}{\tau^{4N-1}(1-\tau^2)} 
\exp\Bigl(2N(g_{\tau}(z)+g_{\tau}(w))\Bigr)
\Bigl( 1+O\Bigl(\frac{1}{\sqrt{N}}\Bigr)\Bigr).
\]
Note here that 
\[
\exp\Bigl(-\frac{2N}{1+\tau}(z+w)
+
2N(g_{\tau}(z)+g_{\tau}(w))\Bigr)
=
\exp\bigl(-2(\zeta^2+\eta^2)\bigr)\Bigl( 1+O\Bigl(\frac{1}{N}\Bigr)\Bigr).
\]
Using the above asymptotic expansions, we have 
\[
\partial_{\eta}\widehat{E}_N(\zeta,\eta) 
=-\frac{2}{\sqrt{\pi}}e^{-2(\zeta^2+\eta^2)}
\Bigl( 1+O\Bigl(\frac{1}{\sqrt{N}}\Bigr)\Bigr).
\]
By the boundary condition $\widehat{E}_N(\zeta,\eta)\to0$ as $\eta\to\infty$, 
we have 
\[
\widehat{E}_N(\zeta,\eta) 
=
\frac{1}{\sqrt{2}}e^{-2\zeta^2}\erfc(\sqrt{2}\eta)
\Bigl( 1+O\Bigl(\frac{1}{\sqrt{N}}\Bigr)\Bigr).
\]
Combining all of the above, the lemma follows. 
\end{proof}

\begin{proof}[Proof of \eqref{GinSE edge kernel} in Theorem~\ref{Thm_StrongNonHermiticity}]

By Proposition~\ref{Prop_Det Pfa struc} (i), \eqref{RNk rescaled symplectic} and \eqref{RescaleKernelS}, it is enough to derive the scaling limit of
\begin{equation}
\label{HatKs}
    \kappa_N^{(\mathrm{s})}(\zeta,\eta)
    := \sqrt{\frac{4AN}{\pi}} 
\frac{\sqrt{\omega_N^{{\rm s}}(z)\omega_N^{{\rm s}}(w)}}{(zw)^{\nu-\frac{1}{4}}
\exp\bigl(-2N\frac{2\sqrt{zw}-\tau(z+w)}{1-\tau^2}\bigr)}
\widehat{\kappa}_N^{(\nu)}(\zeta,\eta).
\end{equation}
Note that by \eqref{UEcase}, we have 
\[
\widehat{S}_{2N}^{(2\nu)}(\zeta,\eta)=e^{(\zeta-\eta)^2}\erfc(\zeta+\eta)\Bigl( 1+O\Bigl(\frac{1}{\sqrt{N}}\Bigr)\Bigr),\quad \text{as $N\to\infty$},
\]
uniformly for $\zeta,\eta$ in compact subsets of $\C$. 
Using this together with \eqref{RescaledODES} and Lemma~\ref{Lem_ES2}, we have 
\begin{equation}
\partial_{\zeta}\widehat{\kappa}_N^{(\nu)}(\zeta,\eta) = e^{(\zeta-\eta)^2}\erfc(\zeta+\eta)-\frac{e^{(\zeta-\eta)^2}}{\sqrt{2}}e^{-2\zeta^2}\erfc(\sqrt{2}\eta)
+O\Bigl(\frac{1}{\sqrt{N}}\Bigr). 
\end{equation}
Taking $N \to \infty$, we arrive at the differential equation for $\kappa_{\rm e}$ in \eqref{GinSE edge kernel}, which first appeared in \cite{ABK22} and later used in \cite{BE23}, see also \cite[Propositoin 2.2 (a)]{BF23a}.
Then we can again use the initial condition $\widehat{\kappa}_N^{(\nu)}(\eta,\eta)=0$ to complete the proof. 
\end{proof}

\section{Scaling limits at weak non-Hermiticity}  \label{Section_weak bulk}

In this section, we prove Theorems~\ref{Thm_BulkWeakNonHermiticity} and ~\ref{Thm_EdgeWeakNonHermiticity}. 

\subsection{Proof of Theorem~\ref{Thm_BulkWeakNonHermiticity} (i)}

We shall use the following  Plancherel-Rotach type asymptotic behaviour of the Laguerre polynomial. 

\begin{lem}\label{Lem_LUW}
Let $z = z(\zeta)$ be the local coordinate given by \eqref{LightNotationZ}. 
Then as $N \to \infty$, we have 
\begin{align}
\begin{split}
\label{LaguerreTauBulk1}
L_N^{(\nu)}\Bigl(\frac{N}{\tau}z \Bigr)
&= (-1)^N 
\frac{\exp\bigl(\frac{N}{2}p+c\sqrt{p}\zeta+\frac{c^2p}{2}\bigr)}{p^{\nu/2}\bigl(\frac{\pi}{2}\sqrt{p(4-p)}\bigr)^{1/2}}
\frac{1}{N^{(\nu+1)/2}}\sqrt{\frac{(N+\nu)!}{N!}}
\\
&\quad \times
\cos\Bigl( 
2N\beta(p)-(\nu+1)\arccos\bigl(\frac{\sqrt{p}}{2}\bigr)+\frac{c}{2}\sqrt{4-p}(2\zeta+c\sqrt{p})+\frac{\pi}{4}
\Bigr)
(1+O(N^{-1})),
\end{split}
\\
\begin{split}
    \label{LaguerreTauBulk2}
L_{N-1}^{(\nu)}\Bigl(\frac{N}{\tau}z \Bigr)
&= (-1)^{N-1}
\frac{\exp\bigl(\frac{p}{2}N+c\sqrt{p}\zeta+\frac{c^2p}{2}\bigr)}{p^{\nu/2}(\frac{\pi}{2}\sqrt{(4-p)p})^{1/2}}
\frac{1}{N^{(\nu+1)/2}}\sqrt{\frac{(N+\nu)!}{N!}}
\\
&\quad \times
\cos\Bigl( 
2N\beta(p)-(\nu-1)\arccos\bigl(\frac{\sqrt{p}}{2}\bigr)+\frac{c}{2}\sqrt{4-p}(2\zeta+c\sqrt{p})+\frac{\pi}{4}
\Bigr)
(1+O(N^{-1})),
\end{split}
\end{align}
uniformly for $\zeta$ in a compact subset of $\C$, where  
\begin{equation}
\beta(p)=\frac{\sqrt{p(4-p)}}{4}-\arccos\bigl(\frac{\sqrt{p}}{2}\bigr), \qquad p \in [0,4].  
\end{equation}
\end{lem}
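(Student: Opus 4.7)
The plan is to reduce both expansions to the Plancherel-Rotach asymptotic of Lemma~\ref{StrongBulkFixNu} via a rescaling of the argument, then Taylor-expand every factor under the weakly non-Hermitian scaling \eqref{BulkWeakTau}. For \eqref{LaguerreTauBulk1}, I would write $\tfrac{N}{\tau}z=4N\xi$ with $\xi:=z/(4\tau)$. Using $\tau^{-1}=1+c^2/N+O(N^{-2})$, $1-\tau^2=2c^2/N+O(N^{-2})$ and the formula $\delta=1/(2(1-\tau^2)p)$, one obtains $\sqrt{N\delta}=N/(2c\sqrt{p})\cdot(1+O(N^{-1}))$ and hence
\begin{equation}
\xi=\frac{p}{4}+\frac{c\sqrt{p}(c\sqrt{p}+2\zeta)}{4N}+O(N^{-2}).
\end{equation}

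Inserting this into Lemma~\ref{StrongBulkFixNu}, the algebraic prefactors expand in a straightforward way: $(4N\xi)^{-\nu/2}=N^{-\nu/2}p^{-\nu/2}(1+O(N^{-1}))$, $(2\pi\sqrt{\xi(1-\xi)})^{-1/2}=(\tfrac{\pi}{2}\sqrt{p(4-p)})^{-1/2}(1+O(N^{-1}))$, and $e^{2N\xi}=\exp(\tfrac{N}{2}p+\tfrac{c^2p}{2}+c\sqrt{p}\zeta)(1+O(N^{-1}))$. The only delicate part is the cosine phase. Setting $f(\xi):=\sqrt{\xi(1-\xi)}-\arccos\sqrt{\xi}$, so that $f(p/4)=\beta(p)$ and $f'(\xi)=\sqrt{(1-\xi)/\xi}$, the first-order Taylor expansion gives
\begin{equation}
2Nf(\xi)=2N\beta(p)+\tfrac{c}{2}\sqrt{4-p}\bigl(c\sqrt{p}+2\zeta\bigr)+O(N^{-1}),
\end{equation}
which together with $\arccos\sqrt{\xi}=\arccos(\sqrt{p}/2)+O(N^{-1})$ yields \eqref{LaguerreTauBulk1}.

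For \eqref{LaguerreTauBulk2}, I would instead set $\tfrac{N}{\tau}z=4(N-1)\tilde\xi$, so that $\tilde\xi=\xi+p/(4N)+O(N^{-2})$. By construction the prefactors $(4(N-1)\tilde\xi)^{-\nu/2}$ and $e^{2(N-1)\tilde\xi}$ coincide with $(4N\xi)^{-\nu/2}$ and $e^{2N\xi}$, and the amplitude and Stirling-type ratio differ only by $1+O(N^{-1})$. The new phase is $2(N-1)f(\tilde\xi)-(\nu+1)\arccos\sqrt{\tilde\xi}+\tfrac{\pi}{4}$, for which a short computation together with the identity $\beta(p)=\sqrt{p(4-p)}/4-\arccos(\sqrt{p}/2)$ gives
\begin{equation}
2(N-1)f(\tilde\xi)=2Nf(\xi)+\tfrac{1}{2}\sqrt{p(4-p)}-2\beta(p)+O(N^{-1})=2Nf(\xi)+2\arccos(\sqrt{p}/2)+O(N^{-1}).
\end{equation}
This extra $+2\arccos(\sqrt{p}/2)$ absorbs into the $\arccos$ coefficient and converts $(\nu+1)$ into $(\nu-1)$, thereby producing \eqref{LaguerreTauBulk2}. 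The main obstacle I anticipate is the bookkeeping of the $O(1/N)$ corrections in the phase, since it is multiplied by $N$ (or $N-1$), so an $O(1/N)$ error in $\xi$ produces a nontrivial $O(1)$ contribution to the cosine argument; in particular the cancellation $-2\beta(p)+\tfrac{1}{2}\sqrt{p(4-p)}=2\arccos(\sqrt{p}/2)$ is what makes the $(\nu+1)\to(\nu-1)$ conversion in \eqref{LaguerreTauBulk2} work out cleanly.
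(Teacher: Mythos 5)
Your proposal is correct and takes essentially the same route as the paper: write $\tfrac{N}{\tau}z=4N\xi$ (resp.\ $4(N-1)\tilde\xi$), Taylor-expand $\xi$ (resp.\ $\tilde\xi$) to order $1/N$, and insert into Lemma~\ref{StrongBulkFixNu}. You fill in the phase bookkeeping that the paper leaves implicit — in particular the identity $-2\beta(p)+\tfrac12\sqrt{p(4-p)}=2\arccos(\sqrt{p}/2)$ that produces the $(\nu+1)\to(\nu-1)$ shift — but this is an elaboration, not a different method.
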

\begin{proof}
By \eqref{LightNotationZ} and Taylor expansion, 
we have 
\[
z=p+\frac{2c\sqrt{p}}{N}\zeta-\frac{c^3\sqrt{p}}{2N^2}\zeta-\frac{c^5\sqrt{p}}{16N^3}\zeta+O(N^{-4}).
\]
Therefore it follows that 
\begin{align}
\label{ZE1}
\frac{N}{\tau}z&=4N\cdot \frac{1}{4}\Bigl(p+\frac{2c\sqrt{p}\zeta+c^2p}{N}+O(N^{-2})\Bigr),\\
\label{ZE2}
\frac{N}{\tau}z&=4(N-1)\cdot \frac{1}{4}\Bigl(p+\frac{2c\sqrt{p}\zeta+(c^2+1)p}{N}+O(N^{-2})\Bigr).
\end{align} 
By Lemma~\ref{StrongBulkFixNu} and the behaviours \eqref{ZE1}, and \eqref{ZE2}, we obtain \eqref{LaguerreTauBulk1} and \eqref{LaguerreTauBulk2}.
This completes the proof.
\end{proof}

We now complete the proof of Theorem~\ref{Thm_BulkWeakNonHermiticity} (i).

\begin{proof}[Proof of Theorem~\ref{Thm_BulkWeakNonHermiticity} (i)]

Under the translation invariance along the horizontal direction, the weakly non-Hermitian bulk scaling limit was characterised in \cite[Theorem 3.8]{AB23}, see also \cite{AKMW20}. 
In our present case, the translation invariant limiting kernel is of the form \eqref{GinUE wH bulk kernel}. 
Such a characterisation is particularly helpful, but in general, there is no general theory on verifying the translation invariance.

We shall apply the differential equation in Theorem~\ref{Thm_ODE} to prove the translation invariance. 
First, note that by direct computations, we have 
\[
\frac{\sqrt{\omega_N^{(c)}(z)\omega_N^{(c)}(w)}}{\sqrt{\frac{\pi}{2AN}}(zw)^{\frac{\nu}{2}-\frac{1}{4}}  \exp\bigl(-N\frac{2\sqrt{zw}-\tau(z+w)}{1-\tau^2}\bigr) }
 = c_N(\zeta,\eta) e^{ \zeta \eta- (|\zeta^2|+|\eta|^2)/2   }
 \bigl(1+O(N^{-1})\bigr), 
\]
where $c_N$ is a cocycle. 
By Lemma~\ref{LaguerreTauBulk1} and the elementary trigonometric identity, we have 
\begin{align*}
&\quad 
L_{N-1}^{(\nu)}\Bigl(\frac{N}{\tau}z\Bigr)L_{N}^{(\nu)}\Bigl(\frac{N}{\tau}w \Bigr)
- \tau^2L_{N}^{(\nu)}\Bigl(\frac{N}{\tau}z \Bigr)L_{N-1}^{(\nu)}\Bigl(\frac{N}{\tau}w \Bigr)
\\
&=
\frac{1}{\pi} \frac{1}{p^{\nu}}
\frac{\exp\bigl(Np+c\sqrt{p}(\zeta+\eta)+c^2p\bigr)}{N^{\nu}} 
\frac{\Gamma(N+\nu)}{N!} 
\Bigl( \sin\bigl(c\sqrt{4-p}(\zeta-\eta) \bigr) + O(N^{-1}) \Bigr),
\end{align*}
as $N\to \infty.$
Note that by Taylor expansion, we have 
\[
\exp\Bigl(-\frac{N}{1-\tau^2}(2\sqrt{zw}-\tau(z+w))\Bigr)
=
\exp\Bigl(-Np-\frac{cp^2}{2}+\frac{1}{2}(\zeta-\eta)^2-c\sqrt{p}(\zeta+\eta)
\Bigr)(1+O(N^{-1}))
\]
and 
\begin{align*}
\sqrt{\frac{\pi}{2AN}}
\frac{N^{\nu+2}\tau^{2N-1} }{(1-\tau^2)^2(N\delta)^{3/2}} \frac{N!}{\Gamma(N+\nu)}
\frac{(zw)^{\frac{\nu}{2}+\frac{1}{4}}}{zw}
=
\frac{\sqrt{2\pi} N^{\nu} \cdot N!}{\Gamma(N+\nu)}
  p^{\nu}e^{-2c^2}
\bigl(1+O(N^{-1})\bigr). 
\end{align*}
Therefore, by combining the above asymptotic expansions with \eqref{RescaledODES} and \eqref{WhatIN}, 
we obtain 
\[
\partial_{\zeta}\widehat{S}_N^{(\nu)}(\zeta,\eta)
=
 \sqrt{\frac{2}{\pi}}\,e^{-\frac{c^2}{2}(4-p)+\frac{1}{2}(\zeta-\eta)^2}\sin\bigl(c\sqrt{4-p}(\zeta-\eta) \bigr)+O(N^{-1}),\quad\text{as $N\to\infty$},
\]
uniformly for $\zeta,\eta$ in compact subsets of $\C$. Note that the right-hand side of this equation depends on $\zeta-\eta$, which provides the translation invariance along the horizontal direction. This completes the proof. 
\end{proof}

\subsection{Proof of Theorem~\ref{Thm_BulkWeakNonHermiticity} (ii)}

In this subsection, we prove Theorem~\ref{Thm_BulkWeakNonHermiticity} (ii).
As in the previous subsection, we need the following asymptotic behaviours, which again follow from \eqref{ZE1}, \eqref{ZE2}, and Lemma~\ref{StrongBulkFixNu}. 

\begin{lem}\label{Lem_LSW1}
Let $z = z(\zeta)$ be the local coordinate given by \eqref{LightNotationZ}.
Then as $N\to\infty$, we have 
\begin{align}
\begin{split}
\label{SBulkcase1}
L_{2N}^{(2\nu)}\Bigl(\frac{2N}{\tau}z \Bigr)
&= \frac{e^{Np+(2c\sqrt{p}\zeta+c^2p)}}{(2p)^{\nu}(\pi\sqrt{p(4-p)})^{1/2}}\frac{1}{N^{(2\nu+1)/2}}
\sqrt{\frac{(2N+2\nu)!}{(2N)!}}
\\
&\quad
\times 
\cos\Bigl(
4N\beta(p)-(2\nu+1)\arccos\frac{\sqrt{p}}{2}+c\sqrt{4-p}(2\zeta+c\sqrt{p})+\frac{\pi}{4}
\Bigr)(1+O(N^{-1})),
\end{split}
\\
\begin{split}
\label{SBulkcase3}
L_{2N-1}^{(2\nu)}\Bigl(\frac{2N}{\tau}z\Bigr) 
&=
\frac{e^{Np+(2c\sqrt{p}\zeta+c^2p)}}{(\pi\sqrt{p(4-p)})^{1/2}} (2Np)^{-\nu}N^{-1/2}
\sqrt{\frac{(2N+2\nu-1)!}{(2N-1)!}}
\\
&\quad
\times 
\cos\Bigl(
4N\beta(p)-(2\nu-1)\arccos\frac{\sqrt{p}}{2}+c\sqrt{4-p}(2\zeta+c\sqrt{p})+\frac{\pi}{4}
\Bigr)(1+O(N^{-1})),
\end{split}
\end{align}
uniformly in $\zeta$ in a compact subset of $\C$. 
\end{lem}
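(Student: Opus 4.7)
The proof will closely parallel that of Lemma~\ref{Lem_LUW}, with the substitutions $N\mapsto 2N$ and $\nu\mapsto 2\nu$ when applying the Plancherel--Rotach asymptotic from Lemma~\ref{StrongBulkFixNu}. The first step is to expand the argument $\tfrac{2N}{\tau}z$ in the normalized form $4M\xi$ in two different ways, suited to each of the two cases. Using $\tau^{-1}=1+c^2/N+O(N^{-2})$ together with the expansion of $z$ in $\zeta$ coming from \eqref{LightNotationZ} (recalling that $N\delta=N^2/(4c^2p)+O(N)$ and $\mathbf{n}(p)=1$ in the bulk), I would derive the two representations
\begin{align*}
\frac{2N}{\tau}z &= 4(2N)\cdot\tfrac{1}{4}\Bigl(p+\tfrac{pc^2+2c\sqrt{p}\,\zeta}{N}+O(N^{-2})\Bigr),\\
\frac{2N}{\tau}z &= 4(2N-1)\cdot\tfrac{1}{4}\Bigl(p+\tfrac{(2c^2+1)p+4c\sqrt{p}\,\zeta}{2N}+O(N^{-2})\Bigr),
\end{align*}
playing the role of \eqref{ZE1}--\eqref{ZE2} here. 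Since $p\in(0,4)$, the normalized argument $\xi$ in each case lies in the oscillatory strip $B_\delta$ for sufficiently large $N$, so Lemma~\ref{StrongBulkFixNu} applies directly.

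Next, I would Taylor expand the trigonometric argument $2M\sqrt{\xi(1-\xi)}-(2M+2\nu+1)\arccos\sqrt{\xi}+\pi/4$ to first order around $\xi_0=p/4$, using
\begin{equation*}
\partial_\xi\sqrt{\xi(1-\xi)}\big|_{p/4}=\frac{2-p}{\sqrt{p(4-p)}},\qquad \partial_\xi\arccos\sqrt{\xi}\big|_{p/4}=-\frac{2}{\sqrt{p(4-p)}}.
\end{equation*}
The crucial algebraic identity is that the first-order contributions from the two pieces combine via
\begin{equation*}
\frac{4v(2-p)+8v}{\sqrt{p(4-p)}}=\frac{4v\sqrt{4-p}}{\sqrt{p}},
\end{equation*}
so that with $v=(pc^2+2c\sqrt{p}\,\zeta)/4$ the linear-in-$\zeta$ correction reduces to exactly $c\sqrt{4-p}(2\zeta+c\sqrt{p})$. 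Together with the zeroth-order piece $4N\beta(p)-(2\nu\pm 1)\arccos(\sqrt{p}/2)+\pi/4$, this yields the stated cosine arguments, the sign $\pm 1$ distinguishing $M=2N$ from $M=2N-1$.

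The prefactor is handled analogously: $(4M\xi)^{-\nu}=(2Np)^{-\nu}(1+O(N^{-1}))$, $e^{2M\xi}=e^{Np+c^2p+2c\sqrt{p}\,\zeta}(1+O(N^{-1}))$, and $(2\pi\sqrt{\xi(1-\xi)})^{-1/2}=(2/(\pi\sqrt{p(4-p)}))^{1/2}(1+O(N^{-1}))$, which combined with $M^{-1/2}$ and the factorial $\sqrt{(M+2\nu)!/M!}$ produce the stated normalization, using the identity $(2p)^{-\nu}N^{-(2\nu+1)/2}=(2Np)^{-\nu}N^{-1/2}$ to reconcile the two equivalent presentations in \eqref{SBulkcase1} and \eqref{SBulkcase3}.

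The main obstacle is the bookkeeping in the $L_{2N-1}^{(2\nu)}$ case, where the additional correction $p/(8N)$ in the second $\xi'$ and the subleading piece $-\sqrt{p(4-p)}/2$ arising from $(4N-2)\cdot\sqrt{p(4-p)}/4$ must cancel exactly against each other to leave the clean final form. This cancellation is mechanical but requires carefully assembling the first-order Taylor corrections with the factor $(4N-2)$ in place of $4N$; once verified, the residual $O(N^{-1})$ errors from the subleading cosine in Lemma~\ref{StrongBulkFixNu} and from the Stirling-type expansions contribute only to the global $O(N^{-1})$ remainder, uniform for $\zeta$ in compact subsets of $\C$.
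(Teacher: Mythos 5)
Your proposal reproduces the paper's (one-sentence) argument: the paper simply states that Lemma~\ref{Lem_LSW1} ``follows from \eqref{ZE1}, \eqref{ZE2}, and Lemma~\ref{StrongBulkFixNu},'' and you fill in exactly those steps, correctly adapting the expansions to $\tfrac{2N}{\tau}z$ and applying the Plancherel--Rotach asymptotics with $M=2N$ and $M=2N-1$, $\nu\mapsto 2\nu$. Your two representations of $\tfrac{2N}{\tau}z$, the first-order Taylor expansion of $\sqrt{\xi(1-\xi)}$ and $\arccos\sqrt{\xi}$ at $\xi_0=p/4$, the algebraic combination $\bigl(4v(2-p)+8v\bigr)/\sqrt{p(4-p)}=4v\sqrt{4-p}/\sqrt{p}$, and the cancellation of the $-\sqrt{p(4-p)}/2$ term coming from the factor $(4N-2)$ against the extra $p/(8N)$ correction in $\xi'$ all check out, as does the treatment of the prefactors.

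One point you should flag explicitly: applying Lemma~\ref{StrongBulkFixNu} with $M=2N-1$ produces the overall sign $(-1)^{M}=(-1)^{2N-1}=-1$, so the right-hand side of \eqref{SBulkcase3} should carry a minus sign. The stated formula in the lemma omits it; this is consistent with a typo in the paper, since the analogous critical-regime asymptotic \eqref{SEdgecase3} in Lemma~\ref{Lem_SEE1w} \emph{does} retain the explicit minus sign from $(-1)^{2N-1}$. Your derivation will reproduce the minus sign, and you should either record it or note the discrepancy; it does not affect the validity of the strategy, which matches the paper's.
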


For the proof of Theorem~\ref{Thm_BulkWeakNonHermiticity} (ii), we shall again make use of the differential equation \eqref{RescaleKernelT}.
For this purpose, one needs to derive the asymptotic behaviours of \eqref{RescaleKernelS} and \eqref{RescaleKernelT}. 

\begin{lem}\label{Lem_BE2w}
As $N\to\infty$, we have 
\begin{equation}
\label{hatENODE3}
    \widehat{E}_N(\zeta,\eta)\to 0,
\end{equation}
uniformly for $\zeta,\eta$ in compact subsets of $\C$.
\end{lem}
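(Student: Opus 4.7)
The plan is to exploit the factorized form of $\widehat{E}_N$ that is manifest from \eqref{RescaleKernelE}. Writing $\widehat{E}_N(\zeta,\eta) = C_N\,\mathbf{f}_N(\zeta)\,\mathbf{g}_N(\eta)$ with
\[
\mathbf{f}_N(\zeta) := z^{\nu-1/4}\,e^{-2Nz/(1+\tau)}\,L_{2N}^{(2\nu)}(2Nz/\tau),\qquad \mathbf{g}_N(\eta) := w^{\nu-1/4}\,e^{-2Nw/(1+\tau)}\,g_N(w),
\]
I would bound the three factors separately. Stirling's formula applied to the explicit prefactor in \eqref{RescaleKernelE} gives $C_N = O(N^{\nu+1/2})$ in the weak regime $\tau=1-c^2/N$. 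The Plancherel--Rotach expansion of Lemma~\ref{Lem_LSW1} combined with the identity $2Nz/(1+\tau) = Np + 2c\sqrt{p}\,\zeta + c^2 p/2 + O(1/N)$ cancels the exponential $e^{Np+2c\sqrt{p}\,\zeta + c^2 p}$ coming from the Laguerre polynomial, leaving $\mathbf{f}_N(\zeta) = O(N^{-1/2})$ uniformly on compacts.

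The crux is then to show $\mathbf{g}_N(\eta) = o(N^{-\nu})$, equivalently $g_N(w) = o(N^{-\nu}\,e^{Nw})$. I would split $g_N(w) = g_N^{<}(w) + g_N^{>}(w)$ according to whether $j<Np/4$ or $j>Np/4$, corresponding to the exponential or oscillatory regime of $L_{2j}^{(2\nu)}(2Nw/\tau)$. For $j>Np/4$, Lemma~\ref{StrongBulkFixNu} with parameters $(N,\nu)\mapsto(2j,2\nu)$ yields an oscillatory expansion with a rapidly varying phase $\Phi_j$ of order $N$; a Riemann--Lebesgue-type cancellation in the $j$-sum produces an extra factor of $1/N$, giving $g_N^{>}(w) = O(N^{-\nu-1}\,e^{Nw/\tau})$. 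For $j<Np/4$, writing $\alpha = j/N$ and using Lemma~\ref{Lem_Strong1}, the rate function $f(\alpha) = 2\alpha\,g_{1}(w/\alpha)$ (with $g_1$ the $\tau\to 1$ limit of the $g$-function from \eqref{GTZ}) attains its maximum value $w$ on $(0,w/4)$ precisely at the boundary $\alpha=w/4$; one checks that $f$ approaches this value via a square-root cusp (since $g_1'$ has a square-root singularity at $u=4$), so a Laplace-type analysis near the cusp together with the Airy transition at the turning point yields $g_N^{<}(w) = O(N^{-\nu-1/2}\,e^{Nw})$.

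Combining the three bounds,
\[
\widehat{E}_N(\zeta,\eta) = O(N^{\nu+1/2})\cdot O(N^{-1/2})\cdot O(N^{-\nu-1/2}) = O(N^{-1/2}) \to 0,
\]
uniformly on compacts, proving the claim. The main obstacle is the careful matching of the three asymptotic regimes (exponential, Airy, oscillatory) in estimating $g_N(w)$, in particular the sharp Laplace-type analysis at the square-root cusp of $f$ and the handling of the Airy transition. A potentially cleaner alternative bypasses the direct sum estimate: in the weak regime the coefficient of $\partial_w^2$ in the ODE of Proposition~\ref{Prop_RemainderODE} is $O(1/N)$, reducing that equation to a first-order ODE whose homogeneous solution is $\propto w^{-(2\nu+1)/2}\,e^{Nw/\tau}$; variation of parameters, with the inhomogeneous term $L_{2N-1}^{(2\nu)}(2Nw/\tau)$ controlled via Lemma~\ref{Lem_LSW1}, then yields the required decay of $g_N$ directly via an oscillatory integral.
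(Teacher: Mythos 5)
Your proposal takes a genuinely different route from the paper. Where you attempt a direct asymptotic analysis of the sum $g_N(w)=\sum_{j=0}^{N-1}\frac{(2j-1)!!\tau^{2j}}{2^j\Gamma(j+\nu+1)}L_{2j}^{(2\nu)}(2Nw/\tau)$ by splitting it at $j\approx Np/4$ and using Laplace/Riemann--Lebesgue-type cancellation in the $j$-index, the paper exploits the ODE machinery twice and never touches the sum directly. Concretely, the paper first shows from \eqref{RescaledODEEN} that $\partial_\eta\widehat{E}_N(\zeta,\eta)=O(1/N)$, which reduces the problem to $\eta=0$; it then introduces $\mathcal{E}_N(p)=N^\nu p^{\nu+1/2}e^{-2Np/(1+\tau)}g_N(p)$, uses Proposition~\ref{Prop_RemainderODE} (in the weak regime the $\partial_s^2$ coefficient is $O(N^{-2})$, so the ODE effectively collapses to $\partial_s\mathcal{E}_N(s)\approx\text{RHS}$), integrates from $s=0$ (where $\mathcal{E}_N(0)=0$ since $\nu>-1/2$), and applies the Riemann--Lebesgue lemma to the resulting oscillatory integral $\int_0^p\mathfrak{a}(t)e^{iN\widetilde{\phi}(t)}dt$. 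A key virtue of this route is that the oscillatory phase $\widetilde{\phi}$ has $\widetilde{\phi}'(t)\neq 0$ on $(0,4)$ with the only degeneracy at $t=4$, which is safely outside the integration range $(0,p)$ since $p<4$; the whole Airy transition is thus bypassed.

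This is precisely the technical obstacle your main argument cannot avoid: in your $j$-sum, the discrete phase increment $\omega_j=\Phi_{j+1}-\Phi_j\approx-4\arccos\sqrt{z_j}$ (with $z_j=Nw/(4j\tau)$) degenerates as $j\downarrow Np/4$, so the Abel-summation/discrete Riemann--Lebesgue cancellation you invoke for $g_N^>$ breaks down exactly at the boundary of your split, and your $g_N^<$ estimate requires a Laplace analysis at that same boundary (where the rate function has an Airy-type $(p/4-\alpha)^{3/2}$ singularity rather than a pure square-root, so the resulting power of $N$ needs to be re-derived). Neither piece is carried out, and matching the two through the Airy transition is nontrivial uniformity work. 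Your factorization $\widehat{E}_N=C_N\mathbf{f}_N(\zeta)\mathbf{g}_N(\eta)$ and the exponent bookkeeping $C_N=O(N^{\nu+1/2})$, $\mathbf{f}_N=O(N^{-1/2})$ are consistent with the paper's claim that the prefactor of $\mathcal{E}_N(p)$ in $\widehat{E}_N(\zeta,0)$ is $O(1)$, so the reduction is sound; what is missing is the hard estimate on $\mathbf{g}_N$. Your alternative at the end (collapse the $g_N$-ODE to first order and do a variation-of-parameters/oscillatory-integral estimate) is essentially what the paper does, the difference being that the paper absorbs the homogeneous growth $e^{Nw/\tau}$ into the definition of $\mathcal{E}_N$ so that the reduced equation is simply $\partial_s\mathcal{E}_N\approx\text{RHS}$, avoiding variation of parameters altogether; note also that the paper additionally needs the $\partial_\eta$-ODE step to propagate the estimate from $\eta=0$ to compacts, a step that your factorized argument would avoid if the uniform-in-$\eta$ estimate of $\mathbf{g}_N$ were actually established.
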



\begin{proof}[Proof of Lemma~\ref{Lem_BE2w}]
By combining Lemma~\ref{Lem_LSW1} and Stirling's formula, after some computations, we have
\begin{align}
\begin{split}
\label{hatENODE1}
\partial_{\eta}\widehat{E}_N(\zeta,\eta)&=
\frac{1}{N}
\frac{8c^2}{\sqrt{\pi}} \frac{e^{-c^2(4-p)}}{\sqrt{p(4-p)}}
\bigg( 
\cos\bigl(2c\sqrt{4-p}(\zeta-\eta)-2\arccos\frac{\sqrt{p}}{2} \bigr)
\\
&\quad
-\sin\bigl( 
2c\sqrt{4-p}(\zeta+\eta)+2(c^2+N)\sqrt{p(4-p)}-4(2N+\nu)\arccos\frac{\sqrt{p}}{2}
\bigr)
\bigg) 
(1+O(N^{-1})),
\end{split}
\end{align}
uniformly for $\zeta,\eta$ in compact subsets of $\C$.
Next, we show that as $N\to\infty$, 
\begin{align}
\label{hatENODE2}
\widehat{E}_N(\zeta,0)&\to 0,
\end{align}
uniformly for $\zeta,\eta$ in compact subsets of $\C$. 
We write 
\begin{equation}
\label{ENODE1}
\mathcal{E}_N(p)
=
N^{\nu}p^{\nu+1/2}e^{-\frac{2N}{1+\tau}p}g_N(p),
\end{equation}
where $g_N(p)$ is defined by \eqref{def of gN}.
Then one can express \eqref{RescaleKernelE} as 
\begin{equation}
\label{What EN 0}
\widehat{E}_N(\zeta,0)
=
\sqrt{\frac{\pi}{4AN}} \frac{z^{\nu-\frac{1}{4}}e^{-\frac{2N}{1+\tau}z}}{p^{\frac{3}{4}}N\delta}
\frac{\sqrt{\pi}(2N)^{\nu+2}(2N)!! \tau^{2N}}{2^{N+\nu-1}(1-\tau^2)\Gamma(N+\nu+1/2)}
L_{2N}^{(2\nu)}\Bigl(\frac{2N}{\tau}z\Bigr)
\mathcal{E}_N(p).
\end{equation}
Note that by using \eqref{SBulkcase1}, after straightforward computations, one can show that the pre-factor of  $\mathcal{E}_N(p)$ in the right-hand side of this identity is of order $O(1)$. 
Thus it suffices to show that $\mathcal{E}_N(p)=o(1)$. 
By Proposition~\ref{Prop_RemainderODE}, we have
\begin{equation}
\label{ENODE2}
\Bigl[\frac{1-\tau^2}{4N}\partial_s^2
+
\partial_s
-
\frac{(1-\tau^2)(4\nu^2-1)}{16Ns^2}
\Bigr]\mathcal{E}_N(s)
=
s^{\nu-\frac{1}{2}}e^{-\frac{2N}{1+\tau}s}
N^{\nu}\frac{(2N-1)!!\tau^{2N-1}}{2^{N}\Gamma(N+\nu)}
L_{2N-1}^{(2\nu)}\Bigl(\frac{2N}{\tau}s\Bigr).
\end{equation}
Here, it follows from  Lemma~\ref{StrongBulkFixNu} that for $s\in(0,p)$ with $p\in(0,4)$, 
\begin{equation}
\label{cEN asym Bulk weak}
\partial_s \mathcal{E}_N(s)
= \frac{e^{-2c^2+\frac{c^2}{2}s}}{\pi s^{3/4}(4-s)^{1/4}}
\cos\Bigl( 
4N\beta(s)+c^2\sqrt{s(4-s)}-(2\nu-1)\arccos\frac{\sqrt{s}}{2}+\frac{\pi}{4}
\Bigr)
\bigl( 1+O(N^{-1}) \bigr).
\end{equation}
Since $\mathcal{E}_N(0)=0$ for $\nu>-1/2$ by \eqref{ENODE1}, we can write
\begin{equation}
\label{ENODE3}
\mathcal{E}_N(p) 
=\int_{0}^p\partial_s \mathcal{E}_N(s)ds. 
\end{equation}
We now estimate the oscillatory integral \eqref{ENODE3} following \cite[Lemma 4.4]{BKLL23}.
Let 
\[
\mathfrak{a}(t)
:=
\frac{1}{\pi}\frac{1}{t^{3/4}(4-t)^{1/4}}
\exp\Bigl( 
-2c^2+\frac{c^2}{2}t-i\bigl(c^2\sqrt{t(4-t)}-(2\nu-1)\arccos\frac{\sqrt{t}}{2}+\frac{\pi}{4}\bigr)
\Bigr)
\]
and 
\[
\widetilde{\phi}(t):=
\sqrt{t(4-t)}-4\arccos\frac{\sqrt{t}}{2}.
\]
Then, by combining \eqref{ENODE3} with \eqref{cEN asym Bulk weak}, we have 
\[
\mathcal{E}_N(p)\sim \re \int_0^p \mathfrak{a}(t) e^{iN\widetilde{\phi}(t)}dt,\quad \text{as $N\to\infty$}. 
\]
Note $\widetilde{\phi}'(t)\neq 0$ for $t\in(0,4)$ with $\widetilde{\phi}'(4)=0$ and $0<t<p<4$. 
Notice also that $1/(t^{3/4}(4-t)^{1/4})$ is $L^1$-integrable over $[0,4]$. 
Therefore, by Riemann-Lebesgue lemma, we have 
\[
\mathcal{E}_N(p)\sim \re \int_0^p \mathfrak{a}(t) e^{iN\widetilde{\phi}(t)} \,dt=O(N^{-1}),\quad \text{as $N\to\infty$}. 
\]
This shows \eqref{hatENODE2}.
By combining \eqref{hatENODE1} with \eqref{hatENODE2}, we obtain \eqref{hatENODE3}, which completes the proof. 
\end{proof}

We are ready to complete the proof of Theorem~\ref{Thm_BulkWeakNonHermiticity} (ii). 

\begin{proof}[Proof of Theorem~\ref{Thm_BulkWeakNonHermiticity} (ii)]
By \eqref{RescaleKernelT} and Lemma~\ref{Lem_BE2w}, we have 
\[
\widehat{T}_N(\zeta,\eta)
= e^{-(\zeta-\eta)^2}  O\Bigl(\frac{1}{N}\Bigr),\quad \text{as $N\to\infty$},
\]
uniformly for $\zeta,\eta$ in compact subsets of $\C$. 
On the other hand, following the computations in Theorem~\ref{Thm_BulkWeakNonHermiticity} (i), we have 
\begin{equation}
\label{LimitBulkODE1}
\partial_{\zeta}\widehat{S}_{2N}^{(2\nu)}(\zeta,\eta)
=
\frac{4}{\sqrt{\pi}}e^{(\zeta-\eta)^2-c^2(4-p)}\sin\bigl(2c\sqrt{4-p}(\zeta-\eta)\bigr)\bigl( 1 + O(N^{-1}) \bigr), \quad
\text{as $N\to\infty$},
\end{equation}
uniformly for $\zeta,\eta$ in compact subsets of $\C$. 
As a consequence, we have  
\begin{equation}
\label{Bulk Weak SHatS}
\widehat{S}_{\mathrm{b}}^{(\rm s)}(\zeta,\eta)
= \lim_{N\to\infty}\widehat{S}_{2N}^{(2\nu)}(\zeta,\eta) = \frac{2}{\sqrt{\pi}}e^{(\zeta-\eta)^2} \int_{E_a}e^{-t^2}\cos\bigl(2t(\zeta-\eta)\bigr)dt,
\end{equation}
where $E_a$ is given by \eqref{Bulk Weak ap}. 
Then by combining all of the above with \eqref{RescaledODEK}, we obtain 
\begin{equation}
\partial_{\zeta}\widehat{\kappa}_N^{(\nu)}(\zeta,\eta)-2(\zeta-\eta)\widehat{\kappa}_N^{(\nu)}(\zeta,\eta)
=
\widehat{S}_{\rm b}^{(\rm s)}(\zeta,\eta)+O\Bigl(\frac{1}{N}\Bigr),
\end{equation}
as $N\to \infty$. 
The resulting differential equation is the one appearing in \cite[Proof of Theorem 2.1]{BES23}. 
Under the initial condition $\widehat{\kappa}_N^{(\nu)}(\zeta,\zeta)=0$, it was shown in \cite{BES23} that its unique solution is given by \eqref{GinSE wH bulk kernel}. 
Then by  \eqref{omega N asymp} (with the subscript $\rm s$), the proof of Theorem~\ref{Thm_BulkWeakNonHermiticity} (ii) is complete.
\end{proof}



\subsection{Proof of Theorem \ref{Thm_EdgeWeakNonHermiticity} (i)}

Throughout this subsection, the parameter $\tau$ is given by \eqref{EdgeWeakTau1}. 
We begin with the critical asymptotic behaviour of the Laguerre polynomial that follows from Lemma~\ref{StrongEdgeFixNu}.

\begin{lem}\label{Lem_LUE1w}
Let $z = z(\zeta)$ be the local coordinate given by \eqref{LightNotationZ}. 
Then as $N \to \infty$, we have 
\begin{align}
\begin{split}
\label{UWEdgecase1}
L_{N}^{(\nu)}\Bigl(N\frac{z}{\tau} \Bigr)
&=
(-1)^N
\exp\Bigl(2N+(2N)^{1/3}\bigl(\sqrt{2}c\zeta+\frac{c^4}{4} \bigr)+\frac{\sqrt{2}c^3\zeta+c^6}{4}
\Bigr)
\\
&
\quad \times (4N)^{-\nu/2} (2N)^{-1/3}\sqrt{\frac{(N+\nu)!}{N!}}
\Ai\Bigl(\sqrt{2}c\,\zeta+\frac{c^4}{4} \Bigr)
\bigl( 1+ O(N^{-1/3})\bigr),
\end{split}
\\
\begin{split}
\label{UWEdgecase2}
L_{N-1}^{(\nu)}\Bigl(\frac{N}{\tau}z \Bigr) 
&=
(-1)^{N-1}
\exp\Bigl(2N+(2N)^{1/3}\bigl(\sqrt{2}c\zeta+\frac{c^4}{4} \bigr)+\frac{\sqrt{2}c^3\zeta+c^6}{2}\Bigr)
\\
&\quad\times
(4 N)^{-\nu/2}
(2N)^{-1/3}
\sqrt{\frac{(N+\nu)!}{N!}}
\Ai\Bigl(\sqrt{2}c\zeta+\frac{c^4}{4}\Bigr)
\bigl( 1+ O(N^{-1/3})\bigr),
\end{split}
\end{align}
uniformly for $\zeta$ in compact subsets of $\C$.
\end{lem}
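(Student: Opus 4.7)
The proof is a direct application of Lemma~\ref{StrongEdgeFixNu}, whose critical-regime formula describes $L_{N+r}^{(\nu)}(x)$ for $x=4N+2(2N)^{1/3}\xi$ with $\xi$ ranging over any compact subset of $\C$. The whole task therefore reduces to expressing $Nz/\tau$ in that form, reading off the effective $\xi$, and verifying that the prefactor, the exponential $e^{x/2}$, and the argument of $\Ai$ all match the statement.

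Setting $\epsilon:=c^{2}/(2N)^{1/3}$ so that $\tau=1-\epsilon$, at the edge point $p=(1+\tau)^{2}$ one has $\mathbf{n}(p)=1$ and $\delta=[2(1-\tau)(1+\tau)^{3}]^{-1}$. The algebraic identity $(1+\tau)^{2}/\tau=1/\tau+2+\tau=4+\epsilon^{2}+\epsilon^{3}+O(\epsilon^{4})$ gives
\begin{equation*}
\frac{Np}{\tau}=4N+\tfrac{1}{2}c^{4}(2N)^{1/3}+\tfrac{1}{2}c^{6}+O\bigl((2N)^{-1/3}\bigr),
\end{equation*}
while $\sqrt{N\delta}=N^{1/2}(2N)^{1/6}[1+O(\epsilon)]/(4c)$ combined with $1/\tau=1+\epsilon+O(\epsilon^{2})$ produces
\begin{equation*}
\frac{N\zeta}{\tau\sqrt{N\delta}}=2\sqrt{2}\,c\,(2N)^{1/3}\zeta+\tfrac{\sqrt{2}}{2}c^{3}\zeta+O\bigl((2N)^{-1/3}\bigr).
\end{equation*}
Summing the two and dividing by $2(2N)^{1/3}$ yields $Nz/\tau=4N+2(2N)^{1/3}\xi_{N}$ with
\begin{equation*}
\xi_{N}=\sqrt{2}c\zeta+\tfrac{c^{4}}{4}+O\bigl((2N)^{-1/3}\bigr),
\end{equation*}
which keeps $\xi_{N}$ in a compact subset of $\C$ as required.

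With this identification, Lemma~\ref{StrongEdgeFixNu} applied with $r=0$, together with the continuity of $\Ai$ used to absorb the $O((2N)^{-1/3})$ displacement of $\xi_{N}$ from $\sqrt{2}c\zeta+c^{4}/4$ into the stated relative error, immediately produces \eqref{UWEdgecase1}: the claimed exponential is just $e^{x/2}=e^{Nz/(2\tau)}$ regrouped, and Stirling's formula reconciles the prefactor $2^{-\nu-1/3}N^{-1/3}$ of that lemma with the factor $(4N)^{-\nu/2}(2N)^{-1/3}\sqrt{(N+\nu)!/N!}$ appearing in the statement. The expansion \eqref{UWEdgecase2} follows from exactly the same input fed into Lemma~\ref{StrongEdgeFixNu} with $r=-1$; the additional $\Ai'$ correction carried by that lemma is of size $O(N^{-1/3})$ and is absorbed into the error term.

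The only real obstacle is bookkeeping. Because $N\epsilon^{2}=\frac{1}{2}c^{4}(2N)^{1/3}$ and $N\epsilon^{3}=\frac{1}{2}c^{6}$ both contribute non-negligibly, the expansion of $(1+\tau)^{2}/\tau$ must be carried through order $\epsilon^{3}$, and the expansion of $1/\sqrt{N\delta}$ through order $\epsilon$; otherwise the constant-in-$N$ summand inside the exponent is lost or produced with the wrong coefficient, and all other aspects of the argument are routine.
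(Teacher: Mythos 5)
Your method matches the paper's: Taylor-expand $Nz/\tau$ into the form $4N + 2(2N)^{1/3}\xi_N$ with $\xi_N = \sqrt{2}c\zeta + \tfrac{c^4}{4} + O(N^{-1/3})$, apply Lemma~\ref{StrongEdgeFixNu}, and reconcile the prefactor via Stirling. Your insistence on carrying $(1+\tau)^2/\tau$ through order $\epsilon^3$ and $1/\sqrt{N\delta}$ through order $\epsilon$ is precisely what is needed to extract the constant $\tfrac{\sqrt{2}c^3\zeta+c^6}{4}$ inside the exponent, and your computations of $Np/\tau$ and $N\zeta/(\tau\sqrt{N\delta})$ are correct.

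There is, however, a point you should flag rather than pass over. The argument you describe for \eqref{UWEdgecase2} --- ``the same input fed into Lemma~\ref{StrongEdgeFixNu} with $r=-1$'' --- yields for $L_{N-1}^{(\nu)}(Nz/\tau)$ the \emph{same} factor $e^{x/2}=e^{Nz/(2\tau)}$ as for $L_{N}^{(\nu)}(Nz/\tau)$, since $x=Nz/\tau$ is the same in both cases; hence the same constant $\tfrac{\sqrt{2}c^3\zeta+c^6}{4}$ appears in the exponential. The printed formula \eqref{UWEdgecase2} instead carries $\tfrac{\sqrt{2}c^3\zeta+c^6}{2}$. A ratio $L_{N-1}^{(\nu)}(x)/L_N^{(\nu)}(x)\to -1+O(N^{-1/3})$ (a direct consequence of Lemma~\ref{StrongEdgeFixNu}) is incompatible with a finite multiplicative discrepancy of $e^{(\sqrt{2}c^3\zeta+c^6)/4}$ between the two asymptotics, so the $\tfrac{\cdot}{4}$ your argument produces is the correct constant and the $\tfrac{\cdot}{2}$ in the printed \eqref{UWEdgecase2} is a factor-of-two slip (most plausibly from substituting $Nz/\tau$ in place of $Nz/(2\tau)$). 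This reading is also what the subsequent proof of Theorem~\ref{Thm_EdgeWeakNonHermiticity}~(i) relies on: the combination $L_{N-1}^{(\nu)}L_{N}^{(\nu)}-\tau^2L_{N}^{(\nu)}L_{N-1}^{(\nu)}$ is asserted there with the symmetric constant $\tfrac{\sqrt{2}c^3}{4}(\zeta+\eta)+\tfrac{c^6}{2}$, consistent with each factor contributing $\tfrac{\cdot}{4}$. So your proof establishes the corrected version of \eqref{UWEdgecase2}, not the printed one; the write-up should note this explicitly rather than claim the printed formula ``follows from exactly the same input.''
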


We mention that while the edge scaling limit for the elliptic GinUE was obtained in \cite{Be10}, a much simpler proof is given in \cite[Section 6.1]{AB23}. 
In \cite{AB23}, the generalised Christoffel-Darboux formula from \cite{LR16} was used.
Similarly, with the help of Theorem~\ref{Thm_ODE}, we can also derive the weakly non-Hermitian edge scaling limit.

\begin{proof}[Proof of Theorem \ref{Thm_EdgeWeakNonHermiticity} (i)]
We first note that as $N\to\infty$, 
\begin{equation*}
\frac{\sqrt{\omega_N^{(c)}(z)\omega_N^{(c)}(w)}}{\sqrt{\frac{\pi}{2AN}}(zw)^{\frac{\nu}{2}-\frac{1}{4}}
\exp\bigl(-N\frac{2\sqrt{zw}-\tau(z+w)}{1-\tau^2}\bigr) }
 =
 c_N(\zeta,\eta)
\exp\Bigl(-\frac{1}{2}(\zeta-\eta)^2-(\im\zeta)^2-(\im\eta)^2\Bigr)
\bigl(1+O(N^{-1/3})\bigr),
\end{equation*}
where $c_N$ is a cocycle. 
By combining Lemma~\ref{Lem_LUE1w} with \eqref{RescaledODES} and  \eqref{WhatIN}, after straightforward computations, we have 
\begin{equation}
\partial_{\zeta}
\widehat{S}_N^{(\nu)}(\zeta,\eta)
=
-c^2\sqrt{2\pi} e^{\frac{1}{2}(\zeta-\eta)^2+\frac{\sqrt{2}}{2}c^3(\zeta+\eta)+\frac{c^6}{6}}
\Ai\Bigl(\sqrt{2}c\zeta+\frac{c^4}{4} \Bigr)\Ai\Bigl(\sqrt{2}c\eta+\frac{c^4}{4} \Bigr) +O(N^{-1/3}).
\end{equation}
Then, by using the initial condition that $\widehat{S}_N^{(\nu)}$ vanishes at infinity, the limiting kernel $\widehat{S}_{\rm e}^{({\rm c})} := \lim_{N\to\infty} \widehat{S}_N^{(\nu)}$ is given by
\begin{equation}
\widehat{S}_{\rm e}^{({\rm c})}(\zeta,\eta)
=
2c^2\sqrt{2\pi} 
e^{\frac{1}{2}(\zeta-\eta)^2}
\int_{-\infty}^{0}e^{\frac{\sqrt{2}}{2}c^3(\zeta+\eta-2u)+\frac{c^6}{6}}
\Ai\Bigl(\sqrt{2}c(\zeta-u)+\frac{c^4}{4} \Bigr)
\Ai\Bigl(\sqrt{2}c(\eta-u)+\frac{c^4}{4} \Bigr)du.
\end{equation}
Therefore, we conclude \eqref{GinUE wH edge kernel}.
\end{proof}


\subsection{Proof of Theorem \ref{Thm_EdgeWeakNonHermiticity} (ii)}

In this subsection, we prove Theorem \ref{Thm_EdgeWeakNonHermiticity} (ii). 
As before, we begin with the critical asymptotic behaviour. 

\begin{lem}\label{Lem_SEE1w}
Let $z = z(\zeta)$ be the local coordinate given by \eqref{LightNotationZ}. 
Then as $N \to \infty$, we have 
\begin{align}
\begin{split}
\label{SEdgecase1}
L_{2N}^{(2\nu)}\Bigl(\frac{2N}{\tau}z\Bigr)
&=
\exp\Bigl(4N+2(2N)^{1/3}\bigl(\sqrt{2}c\zeta+\frac{c^4}{4} \bigr)+\frac{\sqrt{2}c^3\zeta+c^6}{2}
\Bigr)
\\
&\quad\times
\frac{(4\cdot 2N)^{-\nu}}{2^{2/3}N^{1/3}}
\sqrt{\frac{(2N+2\nu)!}{(2N)!}}
\Ai\Bigl(2^{2/3}\Bigl(\sqrt{2}c\zeta+\frac{c^4}{4}\Bigr) \Bigr)
\bigl( 1+ O(N^{-1/3})\bigr), 
\end{split}
\\
\begin{split}
\label{SEdgecase3}
L_{2N-1}^{(2\nu)}\Bigl(\frac{2N}{\tau}z \Bigr) 
&=
-
\exp\Bigl(4N+2(2N)^{1/3}\bigl(\sqrt{2}c\zeta+\frac{c^4}{4} \bigr)+\frac{\sqrt{2}c^3\zeta+c^6}{2}\Bigr)
\\
&\quad
\times
\frac{(4\cdot 2N)^{-\nu}}{2^{2/3}N^{1/3}}
\sqrt{\frac{(2N+2\nu)!}{(2N)!}}
\Ai\Bigl(2^{2/3}\Bigl(\sqrt{2}c\zeta+\frac{c^4}{4}\Bigr) \Bigr)
\bigl( 1+ O(N^{-1/3})\bigr), 
\end{split}
\end{align}
uniformly for $\zeta$ in a compact subset of $\C$. 
\end{lem}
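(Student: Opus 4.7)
The proof plan is to reduce everything to Lemma~\ref{StrongEdgeFixNu} applied with parameter shifts $N \mapsto 2N$, $\nu \mapsto 2\nu$, and $r = 0, -1$. That lemma says that for $x = 4(2N) + 2(4N)^{1/3}\xi$,
\begin{equation*}
e^{-x/2}L_{2N+r}^{(2\nu)}(x) = (-1)^{2N+r}2^{-2\nu-1/3}(2N)^{-1/3}\Bigl(\Ai(\xi) + O(N^{-1/3})\Bigr),
\end{equation*}
so everything boils down to identifying $\xi$ and the exponential prefactor by a careful Taylor expansion of $\tfrac{2N}{\tau}z(\zeta)$, where $\tau = 1 - c^2/(2N)^{1/3}$ and $p = (1+\tau)^2$. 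The main routine ingredient is to compute $\sqrt{N\delta}$: expanding $1-\tau^2 = 2c^2/(2N)^{1/3} - c^4/(2N)^{2/3}$ and $p = 4 - 4c^2/(2N)^{1/3} + c^4/(2N)^{2/3}$ gives $N/\sqrt{N\delta} = 2\sqrt{2}\,c\,(2N)^{1/3}\,(1 + O(N^{-1/3}))$.

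Next, I would expand
\begin{equation*}
\frac{N}{\tau}z(\zeta) = \frac{N}{\tau}\Bigl(p + \frac{\zeta}{\sqrt{N\delta}}\Bigr) = 4N + \frac{c^4(2N)^{1/3}}{2} + 2\sqrt{2}\,c\,(2N)^{1/3}\zeta + \frac{\sqrt{2}c^3\zeta + c^6}{2} + O(N^{-1/3}),
\end{equation*}
using $Np/\tau = 4N + \tfrac{c^4}{2}(2N)^{1/3} + \tfrac{c^6}{2} + O(N^{-1/3})$ and the correction from the $c^2/(2N)^{1/3}$ term in $1/\tau$ acting on the $\zeta$-part. The $O(N^{-1/3})$ terms in $z$ itself (from the Taylor expansion $z = p + \zeta/\sqrt{N\delta}$ to higher orders) contribute only at the level of the announced error. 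Identifying this with $4(2N) + 2(4N)^{1/3}\xi$ yields the crucial identification
\begin{equation*}
\xi = 2^{2/3}\Bigl(\sqrt{2}\,c\,\zeta + \frac{c^4}{4}\Bigr),
\end{equation*}
since $2(2N)^{1/3} = 2^{4/3}N^{1/3} = (4N)^{1/3}\cdot 2^{2/3}\sqrt{2}$ matches the $\zeta$ coefficient and the $c^4$ term follows analogously. The exponential factor $e^{x/2}$ thus equals $\exp(4N + 2(2N)^{1/3}(\sqrt{2}c\zeta + c^4/4) + (\sqrt{2}c^3\zeta + c^6)/2)(1 + O(N^{-1/3}))$.

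Finally, I would convert the normalization: using Stirling's formula one checks the identity (up to $1 + O(N^{-1/3})$)
\begin{equation*}
2^{-2\nu-1/3}(2N)^{-1/3} = \frac{(4\cdot 2N)^{-\nu}}{2^{2/3}N^{1/3}}\sqrt{\frac{(2N+2\nu)!}{(2N)!}}\cdot\bigl(1 + O(N^{-1})\bigr),
\end{equation*}
which recasts the prefactor in the form stated in the lemma. The case $L_{2N-1}^{(2\nu)}$ is handled identically with $r = -1$; the only changes are the sign $(-1)^{2N-1} = -1$ and a shift of size $O(N^{-1/3})$ in $\xi$ that contributes only to the error term, since $\Ai$ is smooth. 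I expect no serious obstacle here — the content is essentially bookkeeping of the Taylor expansions — but care is needed at the $O(N^{-1/3})$ level in the exponent, because that is precisely the scale at which the $c^6$ and $\sqrt{2}c^3\zeta$ contributions are retained, while one needs to verify that the $O(N^{-1/3})$ correction coming from the $\Ai'$-term in Lemma~\ref{StrongEdgeFixNu} is absorbed into the announced $O(N^{-1/3})$ relative error.
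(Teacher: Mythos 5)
Your approach matches the paper's: Taylor expand the Laguerre argument, apply Lemma~\ref{StrongEdgeFixNu} with $N \mapsto 2N$, $\nu \mapsto 2\nu$, and $r = 0, -1$, and then convert the normalization via Stirling. The final expressions for $\xi$, the exponential factor, and the prefactor are all correct, and your remark that both the $\Ai'$ correction and the $r$-dependence are absorbed into the $O(N^{-1/3})$ error is right.

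Two bookkeeping slips are worth fixing, though neither affects your answer. First, the displayed expansion is for $\tfrac{N}{\tau}z(\zeta)$ (it begins $4N + \cdots$), yet you then identify it with $4(2N)+2(4N)^{1/3}\xi$, which begins $8N$; the Laguerre argument is $\tfrac{2N}{\tau}z(\zeta)$, equal to twice what you wrote, namely $8N + c^4(2N)^{1/3} + 4\sqrt{2}\,c(2N)^{1/3}\zeta + \sqrt{2}c^3\zeta + c^6 + O(N^{-1/3})$, and only this identification produces the stated $\xi = 2^{-4/3}(c^4+4\sqrt{2}c\zeta) = 2^{2/3}\bigl(\sqrt{2}c\zeta+\tfrac{c^4}{4}\bigr)$. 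Relatedly, the arithmetic check $2(2N)^{1/3}=(4N)^{1/3}\cdot 2^{2/3}\sqrt{2}$ is false (the right side is $2^{11/6}N^{1/3}$, not $2^{4/3}N^{1/3}$); the correct check compares the $\zeta$-coefficient in $\tfrac{2N}{\tau}z$, namely $4\sqrt{2}\,c(2N)^{1/3}$, with $2(4N)^{1/3}\cdot 2^{2/3}\sqrt{2}\,c$, both equal to $2^{17/6}cN^{1/3}$. Second, producing the $\tfrac{\sqrt{2}c^3\zeta}{2}$ in the exponent requires the sub-leading correction in
\[
\frac{N}{\sqrt{N\delta}} = 2\sqrt{2}\,c(2N)^{1/3}\Bigl(1 - \frac{3c^2}{4(2N)^{1/3}}+O(N^{-2/3})\Bigr)
\]
as well as the $\tfrac{c^2}{(2N)^{1/3}}$ term in $1/\tau$; keeping only the leading order of $N/\sqrt{N\delta}$, as your text suggests, would give $2\sqrt{2}c^3\zeta$ instead of $\tfrac{\sqrt{2}c^3\zeta}{2}$.
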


Recall that $\widehat{E}_N$ is given by \eqref{RescaleKernelE}. 

\begin{lem}\label{Lem_SEE3w}
Let $\widetilde{c}=2^{1/6}c$. Then as $N\to \infty$, we have 
\begin{equation}
    \label{SE Edge Weak 5}
\widehat{E}_N(\zeta,\eta) \to \widehat{E}_{\rm e}^{(\rm s)}(\zeta,\eta)
:= 4\sqrt{\pi}\,
\widetilde{c}^{\, 2}
e^{\widetilde{c}^{\, 2}\zeta}
\int_{-\infty}^{0}e^{\widetilde{c}^{\, 3}(\zeta-s)+\frac{1}{6}\widetilde{c}^{\, 6}}
\Ai\Bigl(2\widetilde{c}(\zeta-s)+\frac{\widetilde{c}^{\,4}}{4}\Bigr)ds, 
\end{equation}
uniformly for $\zeta,\eta$ in compact subsets of $\C$. 
\end{lem}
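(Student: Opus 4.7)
The strategy is to analyse the inhomogeneous differential equation \eqref{RescaledODEEN} satisfied by $\widehat{E}_N(\zeta,\eta)$. Under the edge weak scaling \eqref{EdgeWeakTau1} we have $1-\tau^2\asymp(2N)^{-1/3}$ and $\sqrt{N\delta}\asymp N^{2/3}$, so the coefficient of $\partial_\eta^2\widehat{E}_N$ in \eqref{RescaledODEEN} is $O(N^{-2/3})$, the coefficient of $\partial_\eta\widehat{E}_N$ is $1+O(N^{-2/3})$, and the zero-order coefficient is $O(N^{-2/3})$. The effective leading-order equation is therefore
\[
\partial_\eta \widehat{E}_N(\zeta,\eta) = \widehat{\mathrm{II}}_N^{(\nu)}(\zeta,\eta) + o(1),
\]
so the lemma reduces to (a) computing the pointwise limit of $\widehat{\mathrm{II}}_N^{(\nu)}$ on compacta, and (b) supplying an appropriate boundary condition in $\eta$, in parallel with the proof of Lemma~\ref{Lem_ES2} in the strongly non-Hermitian setting.

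For step (a), I would substitute the critical Plancherel--Rotach asymptotics of Lemma~\ref{Lem_SEE1w} for $L_{2N}^{(2\nu)}(2Nz/\tau)$ and $L_{2N-1}^{(2\nu)}(2Nw/\tau)$ into the definition \eqref{WhatIIN} of $\widehat{\mathrm{II}}_N^{(\nu)}$, together with Stirling's formula for $(2N)!/\Gamma(2N+2\nu)$ and the Taylor expansions of $\tau^{4N-1}$, $e^{-2N(z+w)/(1+\tau)}$, and $1/\sqrt{N\delta}$ through three subleading orders. Parallel to the cancellations identified in the proof of Theorem~\ref{Thm_EdgeWeakNonHermiticity}~(i), the contributions at orders $N$, $(2N)^{2/3}$ and $(2N)^{1/3}$ (including the mixed $(\zeta+\eta)(2N)^{1/3}$ terms) should cancel systematically. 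Under the substitution $\widetilde{c}=2^{1/6}c$, the remaining $O(1)$ piece has exponential factor $e^{\widetilde{c}^{\,3}(\zeta+\eta)+\widetilde{c}^{\,6}/6}$, and the Airy factors $\Ai(2\widetilde{c}\zeta+\widetilde{c}^{\,4}/4)$ and $\Ai(2\widetilde{c}\eta+\widetilde{c}^{\,4}/4)$ emerge naturally from the two Laguerre polynomials.

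For step (b), I would use the boundary condition $\widehat{E}_N(\zeta,\eta)\to 0$ as $\re(\eta)\to+\infty$. When $w(\eta)=(1+\tau)^2+\eta/\sqrt{N\delta}$ leaves a fixed neighbourhood of the droplet $S$, each $L_{2j}^{(2\nu)}(2Nw/\tau)$ appearing in the sum $g_N(w)$ of \eqref{def of gN} falls in the exponential regime of Lemma~\ref{Lem_Strong1}, and a $g$-function positivity argument analogous to Lemma~\ref{Lem_OPOV} and Proposition~\ref{Lem_OPDens} forces exponential decay of $\widehat{E}_N(\zeta,\eta)$. Integrating the reduced first-order ODE from $\eta=+\infty$ back to $\eta$ yields $\widehat{E}_N(\zeta,\eta)=-\int_\eta^{+\infty}\widehat{\mathrm{II}}_N^{(\nu)}(\zeta,t)\,dt+o(1)$, and the change of variable $s=\zeta-t$ should then convert the resulting Airy integral into the form stated in \eqref{SE Edge Weak 5}.

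The principal obstacle is the delicate exponential and polynomial bookkeeping in step~(a): one must track $\zeta,\eta$-dependent corrections through the three scales $N$, $(2N)^{2/3}$, $(2N)^{1/3}$ and carefully assemble the cancellations arising from $\tau^{4N-1}$, $e^{-2N(z+w)/(1+\tau)}$, the ratio $(2N)!/\Gamma(2N+2\nu)$, and the Laguerre prefactors in Lemma~\ref{Lem_SEE1w}, before comparing with the expected Airy form. A secondary obstacle is the uniform-in-$N$ tail bound on $\widehat{\mathrm{II}}_N^{(\nu)}(\zeta,t)$ along the integration ray needed to justify the interchange of limit and integration over the semi-infinite range; this should follow from the super-exponential decay of the Airy function together with an $\Omega$-positivity type bound on $L_{2j}^{(2\nu)}$ in the exponential regime, but requires verifying that the $O(N^{-2/3})$ corrections dropped from \eqref{RescaledODEEN} remain controllable after integration.
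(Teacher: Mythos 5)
Your step (a) — substituting the critical Plancherel--Rotach asymptotics of Lemma~\ref{Lem_SEE1w} and Stirling's formula into \eqref{WhatIIN} and reducing \eqref{RescaledODEEN} to $\partial_\eta\widehat{E}_N = \widehat{\mathrm{II}}_N^{(\nu)} + o(1)$ — is exactly what the paper does (yielding \eqref{SE Edge Weak 3}). For step (b), however, the paper takes a genuinely different route to fix the constant of integration. Rather than anchoring the first-order ODE at $\eta=+\infty$ and integrating $\widehat{\mathrm{II}}_N^{(\nu)}(\zeta,t)$ over a semi-infinite range, the paper anchors at $\eta=0$ and computes $\widehat{E}_N(\zeta,0)$ separately: by \eqref{What EN 0} it factors as a $\zeta$-dependent prefactor times $\mathcal{E}_N(p)=N^\nu p^{\nu+1/2}e^{-2Np/(1+\tau)}g_N(p)$, then applies Proposition~\ref{Prop_RemainderODE} to deduce that $\mathcal{E}_N'(s)$ is (up to $1+O(N^{-1})$) proportional to the single polynomial $L_{2N-1}^{(2\nu)}(2Ns/\tau)$ in \eqref{WEA3}, and computes $\mathcal{E}_N(p)=-\int_{p_N}^\infty\mathcal{E}_N'(s)\,ds$ by splitting at $r_N=p_N+(2N)^{-2/3+\alpha}$: the critical part $\int_{p_N}^{r_N}$ is evaluated via Lemma~\ref{Lem_SEE1w}, while $\int_{r_N}^\infty$ is made exponentially small via Lemma~\ref{Lem_Strong1} and the $\Omega$-positivity of Lemma~\ref{Lem_OPOV}. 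Your route is more direct (it avoids Proposition~\ref{Prop_RemainderODE} and the $g_N$-analysis entirely), but it transfers essentially the same critical-plus-exponential-tail split from $\mathcal{E}_N'(s)$ to the two-Laguerre-polynomial integrand $\widehat{\mathrm{II}}_N^{(\nu)}(\zeta,t)$, and it additionally requires controlling the $O(N^{-2/3})\,\partial_\eta^2\widehat{E}_N$ and $O(N^{-2/3})\,\widehat{E}_N$ terms after integration over the unbounded $\eta$-range — the obstacle you flag at the end, which does still need to be supplied. One small caution: your change of variable $s=\zeta-t$ produces $\int_{-\infty}^{\zeta-\eta}$ rather than the $\int_{-\infty}^{0}$ appearing in the statement; the natural substitution is $u=\eta-t$, which yields an integral anchored at $0$ and carries the $\eta$-dependence (which must survive, since $\partial_\eta\widehat{E}_N$ is nonzero in the limit). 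Both routes ultimately appeal to \cite{BES23} to identify the resulting derivative/boundary-condition pair with the limiting pre-kernel.
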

\begin{proof}[Proof of Lemma~\ref{Lem_SEE3w}]
By using Lemma~\ref{Lem_SEE1w}, after some computations, we have that as $N\to\infty$, 
\begin{align*}
\widehat{\mathrm{II}}_{N}^{(\nu)}(\zeta,\eta)
&=
-\sqrt{\pi}2^{2+1/3}c^2
\exp\Bigl(\sqrt{2}c^3(\zeta+\eta)+\frac{1}{3}c^6\Bigr)
\\
&\quad\times
\Ai\Bigl(2^{2/3}\Bigl(\sqrt{2}c\zeta+\frac{c^4}{4}\Bigr) \Bigr)
\Ai\Bigl(2^{2/3}\Bigl(\sqrt{2}c\eta+\frac{c^4}{4}\Bigr) \Bigr)\bigl( 1+ O(N^{-1/3})\bigr),
\end{align*}
uniformly for $\zeta,\eta$ in compact subsets of $\C$.
By \eqref{RescaledODEEN}, this shows that as $N \to \infty$,
\begin{align}
\begin{split}
\label{SE Edge Weak 3}
\partial_{\eta}\widehat{E}_N(\zeta,\eta) 
&=
-\sqrt{\pi}2^{2+1/3}c^2
\exp\Bigl(\sqrt{2}c^3(\zeta+\eta)+\frac{1}{3}c^6
\Bigr)
\\
&\quad\times
\Ai\Bigl(2^{2/3}\Bigl(\sqrt{2}c\zeta+\frac{c^4}{4}\Bigr) \Bigr)
\Ai\Bigl(2^{2/3}\Bigl(\sqrt{2}c\eta+\frac{c^4}{4}\Bigr) \Bigr)\bigl( 1+ O(N^{-1/3})\bigr),
\end{split}
\end{align}
uniformly for $\zeta,\eta$ in compact subsets of $\C$. 

Next, we show that as $N\to\infty$, 
\begin{align}
\label{SE Edge Weak 4}
\widehat{E}_N(\zeta,0)
&\to
4\cdot 2^{1/3}c^2\sqrt{\pi}
\Ai\Bigl( 2^{2/3}\Bigl(\sqrt{2}c\zeta+\frac{c^4}{4} \Bigr)\Bigr)
\int_{-\infty}^{0}e^{\frac{1}{3}c^6+\sqrt{2}c^3(\zeta-s)}
\Ai\Bigl(2^{2/3}\Bigl(-\sqrt{2}cs+\frac{c^4}{4} \Bigr) \Bigr)ds,
\end{align}
uniformly for $\zeta,\eta$ in compact subsets of $\C$. 
For this purpose, let us recall \eqref{ENODE1}, \eqref{What EN 0}, and \eqref{ENODE2}. 
Note that by Stirling's formula, we have that as $N\to\infty$,
\begin{equation}
\label{WEA3}
\partial_s\mathcal{E}_N(s)
=
\frac{\sqrt{N}}{\sqrt{\pi}}
\tau^{2N-1} 
s^{\nu-\frac{1}{2}}e^{-\frac{2N}{1+\tau}s}
L_{2N-1}^{(2\nu)}\Bigl(\frac{2N}{\tau}s\Bigr)
\Bigl( 1 + O\Bigl(\frac{1}{N}\Bigr) \Bigr).
\end{equation}
By Lemma~\ref{Lem_SEE1w}, we have that as $N\to\infty$, 
\begin{align}
\begin{split}
\label{WEA4}
&\quad 
\sqrt{\frac{\pi}{4AN}} \frac{z^{\nu-\frac{1}{4}}e^{-\frac{2N}{1+\tau}z}}{p^{\frac{3}{4}}N\delta}
\frac{\sqrt{\pi}(2N)^{\nu+2}(2N)!! \tau^{2N}}{2^{N+\nu-1}(1-\tau^2)\Gamma(N+\nu+1/2)}
L_{2N}^{(2\nu)}\Bigl(\frac{2N}{\tau}z\Bigr)
\\
&=
\sqrt{N} 
2^{\frac{13}{6}}\pi
c\exp\Bigl(\sqrt{2}c^3\zeta+\frac{c^6}{6}\Bigr)
\Ai\Bigl( 2^{2/3}\Bigl(\sqrt{2}c\zeta+\frac{c^4}{4} \Bigr)\Bigr)
\Bigl( 1+O\Bigl(\frac{1}{N^{1/3}}\Bigr)\Bigr),
\end{split}
\end{align}
uniformly for $\zeta$ in a compact subset of $\C$. 
We now compute the asymptotic behaviour of $\mathcal{E}_N(p)$ as $N\to\infty$. 
We fix $r_N=p_N+(2N)^{-2/3+\alpha}$ for $0<\alpha<1/3$. 
We take a point at infinity as a boundary condition on $\R$, and write
\[
\mathcal{E}_N(p)
=\mathcal{E}_N(\infty)-\int_{p_N}^{\infty}\mathcal{E}_N'(s)ds
=-\int_{p_N}^{r_N}\mathcal{E}_N'(s)ds-\int_{r_N}^{\infty}\mathcal{E}_N'(s)ds. 
\]
Note that by using Lemma~\ref{Lem_SEE1w}, we have 
\begin{align*}
\int_{p_N}^{r_N}\mathcal{E}_N'(s)ds
&= (2N)^{-2/3}\int_{0}^{(2N)^{\alpha}}\mathcal{E}_N'(p_N+(2N)^{-2/3}s)ds
\\
&\sim - \frac{1}{\sqrt{N}}\frac{1}{\sqrt{\pi}} \frac{\sqrt{2}}{2^{1/3}}c 
\int_{-\infty}^{0}e^{\frac{1}{6}c^6-\sqrt{2}c^3s}
\Ai\Bigl(2^{2/3}\Bigl(-\sqrt{2}cs+\frac{c^4}{4} \Bigr) \Bigr)ds.
\end{align*}
On the other hand, by \eqref{WEA3} and Lemma~\ref{Lem_Strong1}, we have 
\[
\int_{r_N}^{\infty}\mathcal{E}_N'(s)ds
\sim
\int_{r_N}^{\infty}
\frac{(-1)^{2N-1}}{\sqrt{2}\pi} \frac{\psi(s)^{\nu/2-1}\sqrt{\psi'(s)}}{s^{\nu/2}} e^{-N\Omega(s)} \, ds. 
\]
For a given $s\in[r_N,\infty)$ and a sufficiently large $N$, we have
\[
\Bigl|
 \frac{\psi(s)^{\nu/2-1}\sqrt{\psi'(s)}}{s^{\nu/2}}
\Bigr|
\lesssim  \frac{1}{\sqrt{s}} \frac{1}{(s(s-4))^{1/4}}
\lesssim \frac{1}{(s-4)^{1/4}}.
\]
By Lemma~\ref{Lem_OPOV}, for $s\in[r_N,\infty)$, there exists a positive constant $\epsilon>0$ such that 
$\min_{s\in[r_N,\infty]}\Omega(s)>\epsilon>0$. 
Therefore, for a sufficiently large $N$, we have 
\[
\Bigl| \int_{r_N}^{\infty}\mathcal{E}_N'(s)ds \Bigr|
\sim
\Bigl| \int_{r_N}^{\infty}
\frac{(-1)^{2N-1}}{\sqrt{2}\pi} \frac{\psi(s)^{\nu/2-1}\sqrt{\psi'(s)}}{s^{\nu/2}}
e^{-N\Omega(s)}ds\Bigr|
\leq O(e^{-\epsilon N})\cdot \int_{4}^{\infty}\frac{1}{(s-4)^{1/4}}=O(e^{-\epsilon N}). 
\]
Combining all of the above, we obtain \eqref{SE Edge Weak 4}. 

Finally, under $c=\widetilde{c}/2^{1/6}$, the derivative \eqref{SE Edge Weak 3} and the initial condition \eqref{SE Edge Weak 4} are same as \cite[Eqs.(4.8) and (4.9)]{BES23}, which leads to \eqref{SE Edge Weak 5}.
\end{proof}

We are ready to finish the proof of Theorem \ref{Thm_EdgeWeakNonHermiticity} (ii).

\begin{proof}[Proof of Theorem \ref{Thm_EdgeWeakNonHermiticity} (ii)]
Note that by \eqref{LightNotationZ}, we have
$$
\exp\Big(-\frac{2N}{1-\tau^2}(2\sqrt{zw}-(z+w)) \Big)=e^{(\zeta-\eta)^2}(1+O(N^{-1/3})),
$$ 
as $N\to\infty.$
We set $c=\widetilde{c}/2^{1/6}$. Then as in the previous subsection, we have that as $N\to\infty$,
\begin{align*}
\begin{split}
\partial_{\zeta}
\widehat{S}_{2N}^{(2\nu)}(\zeta,\eta)
=
-\sqrt{\pi} 2^{2+1/3}c^2 
e^{(\zeta-\eta)^2+\sqrt{2}c^3+\frac{1}{3}c^6}
\Ai\Bigl(2^{2/3}\Bigl(\sqrt{2}c\zeta+\frac{c^4}{4}\Bigr) \Bigr)
\Ai\Bigl(2^{2/3}\Bigl(\sqrt{2}c\eta+\frac{c^4}{4}\Bigr) \Bigr)\bigl( 1+ O(N^{-1/6})\bigr),
\end{split}
\end{align*}
uniformly for $\zeta,\eta$ in compact subsets of $\C$. 
As a consequence, by combining \eqref{RescaledODEK} and Lemma~\ref{Lem_SEE3w}, we have  
\begin{equation}
\partial_{\zeta}\widehat{\kappa}_N^{(\nu)}(\zeta,\eta)
=
2(\zeta-\eta)\widehat{\kappa}_N^{(\nu)}(\zeta,\eta)
+
\widehat{S}_{\rm e}^{(\rm s)}(\zeta,\eta)
-
e^{(\zeta-\eta)^2}
\widehat{E}_{\rm e}^{(\rm s)}(\zeta,\eta)
+o(1),
\end{equation}
where 
\begin{equation}
    \label{S Edge Weak 2}
\widehat{S}_{\rm e}^{(\rm s)}(\zeta,\eta)
=
8\sqrt{\pi}\widetilde{c}^{\, 2} e^{(\zeta-\eta)^2}
\int_{-\infty}^{0} \Ai\Bigl(2\widetilde{c}(\zeta-u)+\frac{\widetilde{c}^{\,4}}{4} \Bigr)
\Ai\Bigl(2\widetilde{c}(\eta-u)+\frac{\widetilde{c}^{\,4}}{4} \Bigr)du.
\end{equation}
This resulting differential equation for $\widehat{\kappa}_N$ agree with the one in \cite[Theorem 2.2 and Proposition 4.8]{BES23}. 
In particular, we obtain the universal scaling limit \eqref{GinSE wH edge kernel}.
This completes the proof.
\end{proof}

\bibliographystyle{abbrv}

\end{document}